\documentclass[a4paper,
pagesize,
oneside, 				 
numbers=noenddot,
DIV=12,
headinclude=false, 
bibliography=totoc, 
fontsize=11pt, 
headsepline=true, 
open=right,
abstracton,
cleardoublepage=empty, 
footinclude=false,
english
]{scrartcl}

\usepackage{setspace}
\singlespacing
\usepackage{standalone}

\usepackage{authblk}

\usepackage{mathtools}

\usepackage{stmaryrd}

\usepackage{pdflscape}

\DeclareSymbolFont{extraup}{U}{zavm}{m}{n}
\DeclareMathSymbol{\varclub}{\mathalpha}{extraup}{84} 
\DeclareMathSymbol{\varspade}{\mathalpha}{extraup}{85}

\usepackage{url}

\usepackage{upgreek}

\usepackage{sidecap}

\usepackage{xcolor}
\definecolor{webgreen}{rgb}{0,.5,0}
\definecolor{webbrown}{rgb}{.6,0,0}
\definecolor{RoyalBlue}{cmyk}{1, 0.50, 0, 0}
\definecolor{darkred}{rgb}{0.9,0.1,0.1}
\definecolor{darkblue}{rgb}{0,0,0.7}
\definecolor{darkgreen}{rgb}{0,0.5,0}
\definecolor{bluegray}{rgb}{0.4, 0.6, 0.8}
\definecolor{cadmiumorange}{rgb}{0.93, 0.53, 0.18}
\definecolor{darkcerulean}{rgb}{0.03, 0.27, 0.49}
\usepackage[colorlinks=true, linktocpage=true, linkcolor=darkblue, citecolor=webgreen, backref=page, hyperfootnotes=true, bookmarks=false]{hyperref}

\usepackage[english]{babel}
\usepackage[T1]{fontenc} 
\usepackage{lmodern} 
\usepackage{amsmath, amsfonts, amssymb}
\usepackage{mathtools}
\usepackage{mathrsfs}
\usepackage{verbatim}
\usepackage{stmaryrd}
\usepackage[framed,amsmath,thmmarks,thref,hyperref]{ntheorem}
\usepackage{commath}

\usepackage{longtable}
\usepackage{booktabs}
\usepackage{caption}

\usepackage[labelfont=bf]{caption}

\allowdisplaybreaks

\usepackage[english=american]{csquotes}

\usepackage{graphicx}

\usepackage{comment}

\usepackage{framed}

\usepackage{pdflscape}

\usepackage[]{listofsymbols}

\setkomafont{sectioning}{\bfseries} 

\setkomafont{pagenumber}{\bfseries}
\setkomafont{pageheadfoot}{\normalfont}
\setkomafont{pagenumber}{\normalfont}

\usepackage{enumitem} 
\usepackage{cleveref}
\usepackage{ifthen}

\usepackage{tikz,pgfplots}
\pgfplotsset{compat=newest}
\usetikzlibrary{cd}
\usetikzlibrary{calc}
\usetikzlibrary{arrows, scopes}
\usetikzlibrary{positioning}
\usetikzlibrary{shapes.geometric}
\usetikzlibrary{backgrounds}
\usetikzlibrary{arrows,chains,matrix,positioning,scopes}

\usepackage[multiple,hang,flushmargin]{footmisc}

\usepackage{graphicx}

\usepackage[colorinlistoftodos,disable]{todonotes} 

\bibliographystyle{alpha}

\usepackage{marginnote}

\usepackage[framemethod=TikZ]{mdframed}

\mdfdefinestyle{temporary}{linewidth=3pt, linecolor=red}
\mdfdefinestyle{onlyframe}{middlelinecolor=black!100,roundcorner=1ex, skipbelow=15pt}
\mdfdefinestyle{theorem}{middlelinecolor=black!25, backgroundcolor=black!15,roundcorner=1ex, skipbelow=15pt}
\mdfdefinestyle{onlyback}{middlelinecolor=white!100, backgroundcolor=black!10,roundcorner=1ex, skipbelow=15pt}
\mdfdefinestyle{main}{middlelinecolor=darkblue!25, backgroundcolor=darkblue!15,roundcorner=1ex, skipbelow=15pt}

\usepackage{commath}

\usepackage{enumitem}

\usepackage{nicefrac}

\usepackage{rotating}

\usepackage{relsize}

\usepackage{ctable}
\usepackage{multicol}
\usepackage{multirow}

\usepackage{accents}

\usepackage{ucs}
\usepackage{lmodern} 
\usepackage{setspace,graphicx,tikz,tabularx} 
\usepackage[draft=false,babel,tracking=true,kerning=true,spacing=true]{microtype} 

\input{private.sty}

\usepackage{tikz,pgfplots}
\pgfplotsset{width=\textwidth,height=5cm}
\usepackage{mhequ}
\usepackage{mhsymb}

\usepackage{mathrsfs}

\usepackage{multirow}

\usepackage[framemethod=TikZ]{mdframed}

\mdfdefinestyle{temporary}{linewidth=3pt, linecolor=red}
\mdfdefinestyle{theorem}{middlelinecolor=black!25, backgroundcolor=black!15,roundcorner=1ex, skipbelow=15pt}
\mdfdefinestyle{main}{middlelinecolor=darkblue!25, backgroundcolor=darkblue!15,roundcorner=1ex, skipbelow=15pt}

\def\CA{\mathcal{A}}

\def\CC{\mathcal{C}}
\def\CD{\mathcal{D}}

\def\CF{\mathcal{F}}
\def\CG{\mathcal{G}}
\def\CH{\mathcal{H}}
\def\CI{\mathcal{I}}

\def\CL{\mathcal{L}}
\def\CM{\mathcal{M}}
\def\CN{\mathcal{N}}

\def\CP{\mathcal{P}}

\def\CR{\mathcal{R}}
\def\CS{\mathcal{S}}
\def\CT{\mathcal{T}}
\def\CU{\mathcal{U}}

\def\CX{\mathcal{X}}


\def\DD{\CD}

\def\FF{\mathscr{F}}

\def\II{\mathscr{I}}

\def\LL{\mathscr{L}}
\def\MM{\boldsymbol{\CM}}

\def\TT{\mathscr{T}}


\def\fC{\mathfrak{C}}


\def\fc{\mathfrak{c}}

\def\ft{\mathfrak{t}}

\def\e{\operatorname{e}}
\def\t{\operatorname{t}}


\def\sh{\mathsf{h}}

\newcommand{\esh}{\operatorname{e}_{\mathsf{h}}}

\newcommand*\oline[1]{%
	\vbox{%
		\hrule height 0.5pt
		\kern0.25ex
		\hbox{%
			\kern-0.1em
			\ifmmode#1\else\ensuremath{#1}\fi
			\kern-0.1em
		}
	}
}

\colorlet{testcolor}{green!60!black}
\colorlet{testcolor2}{blue!100!black}
\colorlet{lightblue}{darkblue!80}
\colorlet{grayblue}{bluegray!100}
\colorlet{orange}{cadmiumorange!100}

\usetikzlibrary{calc}
\usetikzlibrary{shapes.misc}
\usetikzlibrary{shapes.symbols}
\usetikzlibrary{shapes.geometric}
\usetikzlibrary{snakes}
\usetikzlibrary{decorations}
\usetikzlibrary{decorations.markings}

\makeatletter
\pgfdeclareshape{crosscircle}
{
	\inheritsavedanchors[from=circle] 
	\inheritanchorborder[from=circle]
	\inheritanchor[from=circle]{north}
	\inheritanchor[from=circle]{north west}
	\inheritanchor[from=circle]{north east}
	\inheritanchor[from=circle]{center}
	\inheritanchor[from=circle]{west}
	\inheritanchor[from=circle]{east}
	\inheritanchor[from=circle]{mid}
	\inheritanchor[from=circle]{mid west}
	\inheritanchor[from=circle]{mid east}
	\inheritanchor[from=circle]{base}
	\inheritanchor[from=circle]{base west}
	\inheritanchor[from=circle]{base east}
	\inheritanchor[from=circle]{south}
	\inheritanchor[from=circle]{south west}
	\inheritanchor[from=circle]{south east}
	\inheritbackgroundpath[from=circle]
	\foregroundpath{
		\centerpoint%
		\pgf@xc=\pgf@x%
		\pgf@yc=\pgf@y%
		\pgfutil@tempdima=\radius%
		\pgfmathsetlength{\pgf@xb}{\pgfkeysvalueof{/pgf/outer xsep}}%
		\pgfmathsetlength{\pgf@yb}{\pgfkeysvalueof{/pgf/outer ysep}}%
		\ifdim\pgf@xb<\pgf@yb%
		\advance\pgfutil@tempdima by-\pgf@yb%
		\else%
		\advance\pgfutil@tempdima by-\pgf@xb%
		\fi%
		\pgfpathmoveto{\pgfpointadd{\pgfqpoint{\pgf@xc}{\pgf@yc}}{\pgfqpoint{-0.707107\pgfutil@tempdima}{-0.707107\pgfutil@tempdima}}}
		\pgfpathlineto{\pgfpointadd{\pgfqpoint{\pgf@xc}{\pgf@yc}}{\pgfqpoint{0.707107\pgfutil@tempdima}{0.707107\pgfutil@tempdima}}}
		\pgfpathmoveto{\pgfpointadd{\pgfqpoint{\pgf@xc}{\pgf@yc}}{\pgfqpoint{-0.707107\pgfutil@tempdima}{0.707107\pgfutil@tempdima}}}
		\pgfpathlineto{\pgfpointadd{\pgfqpoint{\pgf@xc}{\pgf@yc}}{\pgfqpoint{0.707107\pgfutil@tempdima}{-0.707107\pgfutil@tempdima}}}
	}
}
\makeatother

\colorlet{symbols}{black!90!black}
\colorlet{cameron}{darkgreen!90!black}
\colorlet{cameron2}{violet!90!black}
\colorlet{cameron3}{blue!90!black}
\colorlet{testcolor}{green!60!black}
\colorlet{darkblue}{blue!60!black}
\colorlet{darkgreen}{green!60!black}


\def\symbol#1{\textcolor{symbols}{#1}}
\def\1{\mathbf{\symbol{1}}}

\usetikzlibrary{calc}
\usetikzlibrary{shapes.misc}
\usetikzlibrary{shapes.symbols}
\usetikzlibrary{shapes.geometric}
\usetikzlibrary{snakes}
\usetikzlibrary{decorations}
\usetikzlibrary{decorations.markings}

\def\drawx{\draw[-,solid] (-3pt,-3pt) -- (3pt,3pt);\draw[-,solid] (-3pt,3pt) -- (3pt,-3pt);}
\tikzset{
	root/.style={draw=symbols,circle,inner sep=0pt,minimum size=0.5mm,fill=white},	
	smalldot/.style={circle,fill=symbols,draw=symbols,inner sep=0pt,minimum size=0.5mm},
	dot/.style={circle,fill=black,inner sep=0pt,minimum size=1mm},
	bigdot/.style={circle,fill=black,inner sep=0pt,minimum size=4mm},
	noiseedge/.style={black,semithick,decorate, decoration={snake,segment length=4pt,amplitude=1pt}},
	smallestdot1/.style={circle,fill=symbols,draw=symbols,inner sep=0pt,minimum size=0.2mm},
	smallestdot2/.style={circle,fill=cameron,draw=cameron,inner sep=0pt,minimum size=0.2mm},
	smallnoiseedge1/.style={draw=symbols,decorate, decoration={snake,segment length=1.75pt,amplitude=0.55pt}},
	smallnoiseedge2/.style={draw=cameron,decorate, decoration={snake,segment length=1.75pt,amplitude=0.55pt}},
	noiseedge1/.style={draw=symbols,decorate, decoration={snake,segment length=0.9pt,amplitude=0.55pt}},
	noiseedge2/.style={draw=cameron,decorate, decoration={snake,segment length=0.9pt,amplitude=0.55pt}},
	noiseedge3/.style={draw=cameron2,decorate, decoration={snake,segment length=0.9pt,amplitude=0.55pt}},
	noiseedge4/.style={draw=cameron3,decorate, decoration={snake,segment length=0.9pt,amplitude=0.55pt}},
	smallnoisenode1/.style={draw=symbols,circle,inner sep=0pt,minimum size=0.5mm,fill=white},	
	smallnoisenode2/.style={draw=cameron,circle,inner sep=0pt,minimum size=0.5mm,fill=white},
	poly/.style={draw=symbols,circle,inner sep=0pt,minimum size=0.5mm,fill=black},
	dot/.style={circle,fill=black,inner sep=0pt,minimum size=1mm},
	int/.style={circle,fill=black,draw=black,inner sep=0pt,minimum size=0.7mm},
	circ/.style={circle,draw=black,inner sep=0pt, minimum size=1mm},
	var/.style={circle,fill=black!10,draw=black,inner sep=0pt, minimum size=2mm},
	dotred/.style={circle,fill=black!50,inner sep=0pt, minimum size=2mm},
	generic/.style={semithick,shorten >=1pt,shorten <=1pt},
	oddfunc/.style={generic, dotted},
	dist/.style={ultra thick,draw=testcolor,shorten >=1pt,shorten <=1pt},
	testfcn/.style={ultra thick,testcolor,shorten >=1pt,shorten <=1pt,<-},
	testfunction/.style={ultra thick,testcolor,shorten >=1pt,shorten <=1pt},
	testfcnx/.style={ultra thick,testcolor,shorten >=1pt,shorten <=1pt,<-,
		postaction={decorate,decoration={markings,mark=at position 0.6 with {\drawx}}}},
	kprime/.style={semithick,shorten >=1pt,shorten <=1pt,densely dashed,->},
	kprimex/.style={semithick,shorten >=1pt,shorten <=1pt,densely dashed,->,
		postaction={decorate,decoration={markings,mark=at position 0.4 with {\drawx}}}},
	kernel/.style={semithick,shorten >=1pt,shorten <=1pt,->,draw=black},
	Pkernel/.style={ultra thick,shorten >=1pt,shorten <=1pt,->,draw=blue},
	PkernelBig/.style={very thick,shorten >=1pt,shorten <=1pt,decorate, draw=blue, decoration={zigzag,amplitude=1.5pt,segment length = 3pt,pre length=2pt,post length=2pt}},
	multx/.style={shorten >=1pt,shorten <=1pt,
		postaction={decorate,decoration={markings,mark=at position 0.5 with {\drawx}}}},
	kernelx/.style={semithick,shorten >=1pt,shorten <=1pt,->,
		postaction={decorate,decoration={markings,mark=at position 0.4 with {\drawx}}}},
	kernel1/.style={->,semithick,shorten >=1pt,shorten <=1pt,postaction={decorate,decoration={markings,mark=at position 0.45 with {\draw[-] (0,-0.1) -- (0,0.1);}}}},
	kernel2/.style={->,semithick,shorten >=1pt,shorten <=1pt,postaction={decorate,decoration={markings,mark=at position 0.45 with {\draw[-] (0.05,-0.1) -- (0.05,0.1);\draw[-] (-0.05,-0.1) -- (-0.05,0.1);}}}},
	kernelBig/.style={semithick,shorten >=1pt,shorten <=1pt,decorate, decoration={zigzag,amplitude=1.5pt,segment length = 3pt,pre length=2pt,post length=2pt}},
	kernelBigg/.style={thick,shorten >=1pt,shorten <=1pt,decorate, decoration={zigzag,amplitude=3.5pt,segment length = 7pt,pre length=2pt,post length=2pt}},
	kernelBigg1/.style={thick,shorten >=1pt,shorten <=1pt,decorate, decoration={saw,amplitude=3.5pt,segment length = 7pt,pre length=2pt,post length=2pt}},
	kernelBigg2/.style={thick,shorten >=1pt,shorten <=1pt,decorate, decoration={bumps,amplitude=3.5pt,segment length = 7pt,pre length=2pt,post length=2pt}},
	rho/.style={dotted,semithick,shorten >=1pt,shorten <=1pt},
	renorm/.style={shape=circle,fill=white,inner sep=1pt},
	labl/.style={shape=rectangle,fill=white,inner sep=1pt},
	cumu2n/.style={inner sep=3pt},
	cumu2/.style={draw=red!80,fill=red!40},
	cumu3/.style={regular polygon, regular polygon sides=3,draw=red!80,rounded corners=3pt,fill=red!40,minimum size=5mm},
	cumu4/.style={regular polygon, regular polygon sides=4,draw=red!80,rounded corners=3pt,fill=red!40,minimum size=7mm},
	cumu5/.style={regular polygon, regular polygon sides=5,draw=red!80,rounded corners=3pt,fill=red!40,minimum size=7mm},
	bcumu2n/.style={inner sep=3pt},
	bcumu2/.style={draw=blue!80,fill=blue!40},
	bcumu3/.style={regular polygon, regular polygon sides=3,draw=blue!80,rounded corners=3pt,fill=blue!40,minimum size=5mm},
	bcumu4/.style={regular polygon, regular polygon sides=4,draw=blue!80,rounded corners=3pt,fill=blue!40,minimum size=7mm},
	bcumu5/.style={regular polygon, regular polygon sides=5,draw=blue!80,rounded corners=3pt,fill=blue!40,minimum size=7mm},
	xi/.style={circle,fill=symbols!30,draw=symbols,inner sep=0pt,minimum size=1.2mm},
	cmn/.style={circle,fill=cameron!50,draw=symbols,inner sep=0pt,minimum size=1.2mm},
	xix/.style={crosscircle,fill=symbols!10,draw=symbols,inner sep=0pt,minimum size=1.2mm},
	xib/.style={circle,fill=symbols!10,draw=symbols,inner sep=0pt,minimum size=1.6mm},
	xibx/.style={crosscircle,fill=symbols!10,draw=symbols,inner sep=0pt,minimum size=1.6mm},
	not/.style={circle,fill=symbols,draw=symbols,inner sep=0pt,minimum size=0.5mm},
	>=stealth,
}
\makeatletter
\def\DeclareSymbol#1#2#3{\expandafter\gdef\csname MH@symb@#1\endcsname{\tikz[baseline=#2,scale=0.13,draw=symbols]{#3}}\expandafter\gdef\csname MH@symb@#1s\endcsname{\scalebox{0.7}{\tikz[baseline=#2,scale=0.15,draw=symbols]{#3}}}}
\def\<#1>{\csname MH@symb@#1\endcsname}
\makeatother


\DeclareSymbol{wn}{0}{
	\draw[white] (-.5,0) -- (.5,0); \draw[noiseedge1] (0,0)  -- (0,1.5) {};
}

\DeclareSymbol{cm}{0}{
	\draw[white] (-.5,0) -- (.5,0); \draw[noiseedge2] (0,0)  -- (0,1.5) {};
}

\DeclareSymbol{cm2}{0}{
	\draw[white] (-.5,0) -- (.5,0); \draw[noiseedge3] (0,0)  -- (0,1.5) {};
}

\DeclareSymbol{cm3}{0}{
	\draw[white] (-.5,0) -- (.5,0); \draw[noiseedge4] (0,0)  -- (0,1.5) {};
}

\DeclareSymbol{1}{0}{
	\draw (0,0) -- (.7,1.3) node[smallestdot1] {};
	\draw[noiseedge1] (.7,1.3) -- (-.4,2.1) {};
	\node[root] (root) at (0,0) {};	
}

\DeclareSymbol{1g}{0}{
	\draw[noiseedge2] (.7,1.3) -- (-.4,2.1) {};
	\draw (0,0) -- (.7,1.3) node[smallestdot1] {};	
	\node[root] (root) at (0,0) {};	
}

\DeclareSymbol{1p}{0}{
	\draw[noiseedge3] (.7,1.3) -- (-.4,2.1) {};
	\draw (0,0) -- (.7,1.3) node[smallestdot1] {};	
	\node[root] (root) at (0,0) {};	
}

\DeclareSymbol{11}{0}{
	\draw[noiseedge1] (0,0) -- (-1.1,0.8)  {};
	\draw[noiseedge1] (.7,1.3) -- (-.4,2.1) {};
	\draw (0,0) -- (.7,1.3) node[smallestdot1] {};
	\node[root] (root) at (0,0) {};	
}

\DeclareSymbol{1g1}{0}{
	\draw[noiseedge1] (0,0) -- (-1.1,0.8)  {};
	\draw[noiseedge2] (.7,1.3) -- (-.4,2.1)  {};
	\draw (0,0) -- (.7,1.3) node[smallestdot1] {};
	\node[root] (root) at (0,0) {};	
}

\DeclareSymbol{1g1g}{0}{
	\draw[noiseedge2] (0,0) -- (-1.1,0.8) {};
	\draw[noiseedge2] (.7,1.3) -- (-.4,2.1) {};
	\draw (0,0) -- (.7,1.3) node[smallestdot1] {};
	\node[root] (root) at (0,0) {};	
}

\DeclareSymbol{1p1g}{0}{
	\draw[noiseedge2] (0,0) -- (-1.1,0.8) {};
	\draw[noiseedge3] (.7,1.3) -- (-.4,2.1) {};
	\draw (0,0) -- (.7,1.3) node[smallestdot1] {};
	\node[root] (root) at (0,0) {};	
}

\DeclareSymbol{1g1p}{0}{
	\draw[noiseedge3] (0,0) -- (-1.1,0.8) {};
	\draw[noiseedge2] (.7,1.3) -- (-.4,2.1) {};
	\draw (0,0) -- (.7,1.3) node[smallestdot1] {};
	\node[root] (root) at (0,0) {};	
}

\DeclareSymbol{11g}{0}{
	\draw[noiseedge2] (0,0) -- (-1.1,0.8) {};
	\draw[noiseedge1] (.7,1.3) -- (-.4,2.1)  {};
	\draw (0,0) -- (.7,1.3) node[smallestdot1] {};
	\node[root] (root) at (0,0) {};	
}

\DeclareSymbol{phi4}{0}{
	\draw (0,0) -- (1,1) node[smallestdot1] {};
	\draw[noiseedge2] (0,1.2) -- (0.7,1.9) {};
	\draw (0,0) -- (0,1.2) node[smallestdot1] {};
	\draw[noiseedge1] (1.2,0) -- (1.9,0.7) {};
	\draw (0,0) -- (1.2,0) node[smallestdot1] {};
	\draw[noiseedge1] (2,2) -- (2.7,2.7) {};
	\draw (1,1) -- (2,2) node[smallestdot1] {};
	\draw[noiseedge1] (1,2.2) -- (1.7,2.9) {};
	\draw (1,1) -- (1,2.2) node[smallestdot1] {};
	\draw[noiseedge2] (2.2,1) -- (2.9,1.7) {};	
	\draw (1,1) -- (2.2,1) node[smallestdot1] {};
	\node[root] (root) at (0,0) {};	
}

\DeclareSymbol{kpz}{0}{
	\draw[noiseedge1] (0,1.9) -- (0.7,2.6) {};
	\draw[gray] (0,1.2) -- (0,1.9) node[smallestdot1] {};
	\draw[noiseedge2] (0.7,1.2) -- (1.4,1.9) {};
	\draw[gray] (0,1.2) -- (0.7,1.2) node[smallestdot1] {};
	\draw[gray] (0,0) -- (0,1.2) node[smallestdot1] {};
	\draw[noiseedge2] (1.2,0.7) -- (1.9,1.4) {};
	\draw[gray] (1.2,0) -- (1.2,0.7) node[smallestdot1] {};
	\draw[noiseedge1] (1.9,0) -- (2.6,0.7) {};
	\draw[gray] (1.2,0) -- (1.9,0) node[smallestdot1] {};
	\draw[gray] (0,0) -- (1.2,0) node[smallestdot1] {};
	\node[root] (root) at (0,0) {};	
}


\usepackage{caption}
\usepackage{subcaption}

\usepackage{lipsum}

\usepackage{cite}


\setkomafont{dictumauthor}{\normalfont}
\setkomafont{dictumtext}{\normalfont}

\usepackage{colortbl}

\setlength{\columnsep}{2cm}

\allowdisplaybreaks[1]

\usepackage{scalerel}

\makeatletter
\newcommand\bigwp{\mathop{\mathpalette\bigDi@mond\relax}}
\newcommand\bigDi@mond[2]{%
	\vcenter{\hbox{\m@th
			\scalebox{\ifx#1\displaystyle 2\else1.2\fi}{$#1\diamond$}%
	}}%
}
\makeatother

\author{Tom Klose}

\date{3 December 2024}

\title{The constant coefficient \\ in precise Laplace asymptotics for gPAM}

\numberwithin{equation}{section}

\begin{document}

\maketitle

\thispagestyle{plain}

\newcolumntype{R}[1]{>{\raggedright\arraybackslash}p{#1}}

\vspace{2em}

\noindent

\begin{abstract}
	This article resumes the analysis of precise Laplace asymptotics for the generalised Parabolic Anderson Model (gPAM) initiated by Peter Friz and the author. More precisely, we provide an explicit formula for the constant coefficient in the asymptotic expansion in terms of traces and Carleman--Fredholm determinants of certain explicit operators under only slightly stronger assumptions. The proof combines classical Gaussian analysis in abstract Wiener spaces with arguments from the theory of regularity structures. As an ingredient, we prove that the minimiser in the (extended) phase functional of gPAM has better than just Cameron--Martin regularity.
\end{abstract}

\setcounter{tocdepth}{4}
\tableofcontents

\section{Introduction}

In~\cite{friz_klose_22}, Peter Friz and the author have studied precise Laplace asymptotics of the generalised Parabolic Anderson Model~(gPAM). We recall the main result:

\begin{theorem} \label{thm:precise_laplace}
	Let~$\eps \in [0,1]$, $h \in \CH := L^2(\T^2)$, and assume that~$g \in C_b^{N+7}(\R,\R)$ for some $N \in \N_0$. 
	\label{e:solution_space}
	For some~\mbox{$\eta \in (\nicefrac{1}{2},1)$}, we denote by~$\hat{u}^\eps_h \in \CX_T := C_T \CC^\eta(\T^2)$ the \emph{renormalised} solution\footnote{See Hairer~\cite[Sec.~$9.3$ and~$10.4$]{hairer_rs} as well as Cannizzaro, Friz, and Gassiat~\cite[Sec.~$3.5$]{cfg} for a correct interpretation of the renormalisation procedure.} to gPAM driven by~$\eps \xi + h$ for a~$2$D space white noise~$\xi$, that is
	\begin{equation} \label{eq:gpam_shifted_eps}
		(\partial_t - \Delta) \hat{u}^\eps_h = g(\hat{u}^\eps_h) \sbr[1]{\eps \xi + h -\eps^2 g'(\hat{u}^\eps_h) \infty}
	\end{equation}
	with initial condition~$\hat{u}_h^\eps(0,\cdot) = u_0 \in \CC^\eta(\T^2)$. Let~$T^\eps_h$ be the \emph{explosion time}\footnote{See~\cite[App.~B]{friz_klose_22} for details. Sufficient conditions on~$g$ for non-explosion have been investigated in~\cite[Sec.~$3.8$]{cfg}, so the reader can think of~$T^\eps \equiv \infty$ for now.} of~$\hat{u}_h^\eps$, set~$\hat{u}^\eps := \hat{u}^\eps_0$, and ac\-cor\-ding\-ly~\mbox{$T^\eps := T_0^\eps$}. 
	Furthermore, assume that~$F: C_T \CC^\eta(\T^2) \to \R$ satisfies the following hypotheses: 
	\begin{enumerate}[label={(H\arabic*)},itemsep=5pt]
		\item \label{ass:h1} $F \in C_{b}\del[0]{C_T \CC^\eta(\T^2);\R}$, i.e. $F$ is continuous and bounded from $C_T \CC^\eta(\T^2)$ into $\R$.  
		\item \label{ass:h2} The functional $\FF$ given by 
		\begin{equation}
			\FF := (F \circ \Phi \circ \LL) + \II: \ \CH \to (-\infty,+\infty]
		\end{equation}
		attains its \emph{unique} minimum at $\sh \in \CH$. Here, $\II$ denotes \emph{Schilder's rate function}, i.e. 
		\begin{equation}
			\II(h) = \frac{\norm{h}^2_\CH}{2} \thinspace \mathbf{1}_{h \in \CH}  + \infty \thinspace \mathbf{1}_{h \notin \CH},
			\label{eq:schilders_rf}
		\end{equation}
		$\LL$ is the~\emph{lifting operator} from~$\CH$ into model space~$\MM$, and~$\Phi: \MM \to C_T \CC^\eta(\T^2)$ denotes the~\emph{abstract solution map} associated to~\eqref{eq:gpam_shifted_eps}.  	
		\item \label{ass:h3} The functional $F$ is $N + 3$ times Fr\'{e}chet differentiable in a neighbourhood $\CN$ of $(\Phi \circ \LL)(\sh)$. 
		In addition, there exist constants $M_1,\ldots,M_{N+3}$ such that
		\begin{equation}
			\abs[0]{D^{(k)}F(v)[y,\ldots,y]} \leq M_k \norm{y}_{\CX_T}^k, \quad k=1,\ldots,N+3,
			\label{thm:laplace_asymp:ass:h3:boundedness}
		\end{equation}
		holds for all~$v \in \CN$ and $y \in C_T \CC^\eta(\T^2)$.
		\item \label{ass:h4} The minimiser $\sh$ is \emph{non-denegerate}. More precisely, $D^2\FF\sVert_\sh > 0$ in the sense that for all $h \in \CH\setminus\{0\}$ we have
		\begin{equation}
			D^2\FF\sVert_{\sh}[h,h] > 0.
		\end{equation}
	\end{enumerate}	
	Then, the asymptotic expansion
	\begin{equation}
		J(\eps)
		\equiv
		\E\sbr{\exp\del{-\frac{F(\hat{u}^{\eps})}{\eps^2}} \mathbf{1}_{T^\eps > T}} = \exp\del[3]{-\frac{\FF(\sh)}{\eps^2}}\sbr{a_0 + \sum_{k=1}^N a_k \eps^k + {\scriptstyle\mathcal O}(\eps^N)} \quad \text{as} \; \, \eps  \to 0
		\label{thm:laplace_asymp:expansion}
	\end{equation}
	is true, where 
	\begin{equation}
		a_0 = \E\sbr[2]{\exp\del[2]{-\frac{1}{2} \hat{Q}_\sh}}, 
		\quad \hat{Q}_\sh := \partial_\eps^2\sVert[0]_{\eps = 0} F(\hat{u}^\eps_\sh).
		\label{thm:eq:coeff:a_0}
	\end{equation} 	
\end{theorem} 

\paragraph*{Main results.}

The article at hand is concerned with the analysis of~$a_0$ and aims to establish a formula that is as explicit as possible and therefore facilitates computations~\emph{in practice}. The following is our main result: 
\begin{theorem} \label{thm:main_result}
	Let~$\kappa > 0$ be small enough. 
	Let~$N = 0$ and take~$F,g$ as in~Theorem~\ref{thm:precise_laplace} with~$\eta = 1 - 5\kappa$ and for~$T > 0$ small enough.\footnote{The precise condition is that~$T < \bar{T}_\star$ for~$\bar{T}_\star > 0$ as in~Corollary~\ref{lemma:Nzero}.} 
	Additionally, assume that
	$u_0 \in \CC^{1+2\kappa}(\T^2)$.\footnote{See~Remark~\ref{rmk:reg_u_0} for an explanation of this regularity assumption and its relation to~Theorem~\ref{thm:reg_minimiser}.}
	Then, the coefficient~$a_0$ has the representation
	\begin{equation}
		a_0 = 
		e^{-\frac{1}{2}\sbr[0]{\operatorname{Tr}(A-\tilde{A}) + \lambda}} \operatorname{det}_2(\operatorname{Id}_\CH + A)^{-\nicefrac{1}{2}}
		\label{thm:eq_a0}
	\end{equation}
	where~$\operatorname{det}_2$ denotes the Carleman--Fredholm determinant\footnote{See~Dunford and Schwartz~\cite[Def.~$21$, p.~$1106$]{dunford_schwartz} or Simon~\cite[Thm.~$9.2$]{simon_trace_ideals} for a definition and properties of the determinant. One can think of~\enquote{$\operatorname{det}_2(\operatorname{Id}_\CH + A) = \operatorname{det}(\operatorname{Id}_\CH + A) e^{-\operatorname{Tr}(A)}$} where~$\operatorname{det}$ is the usual Fredholm determinant for trace-class operators: Morally, the singularities of the two factors on the RHS cancel out when~$A$ is only Hilbert--Schmidt.}
	and the constant~$\lambda \in \R$ is given in eq.~\eqref{prop:chaos_decomp_hessian:eq2} below.
	The Hilbert-Schmidt operator operator~$A$ is defined in~\eqref{eq:op_A} and characterises the second chaos component of~$\hat{Q}_\sh$ while~$\tilde{A}$ is given in~\eqref{eq:Atilde} and captures the \enquote{singular part} of~$A$.
\end{theorem}

\begin{remark} \label{rmk:assumptions}
	Let us comment on the assumptions in the previous theorem, specifically on the role of the parameter~$\kappa$.  
	Usually, one fixes a parameter~$0 < \kappa \ll 1$ to begin with such that realisations~$\xi(\omega)$ lie in the space~$\CC^{-1-\kappa}(\T^2)$ for a.e.~$\omega \in \Omega$ -- and, in turn, Theorem~\ref{thm:precise_laplace} applies with~$\tilde{\eta} = 1 - \kappa$.
	In contrast, the previous theorem requires a \emph{slightly} stronger assumption both on~$F$ (for example, Hypothesis~\ref{ass:h1} on~$F$ is stronger since~$\eta = 1 - 5\kappa < 1 - \kappa = \tilde{\eta}$) and~$u_0$ (in the form that it needs to live in the \emph{smaller} space~$\CC^{1+2\kappa}(\T^2)$);
	the reason for that is detailed in Remark~\ref{rmk:reg_u_0} below.
	However, since~$\kappa > 0$ can be chosen arbitrarily small, the marginally stronger assumption on~$F$ is not really a restriction in practice; concerning the stronger assumption on~$u_0$, see Remark~\ref{rmk:spaces_with_blowup} below.
\end{remark}

\begin{remark}[The case of \emph{linear}~PAM.] \label{rmk:linear_pam}
	We note that, since~$g(u) = u$ is not bounded, the assumption that~$g \in \CC_b^{N+7}$ in Theorem~\ref{thm:precise_laplace} and thus in Theorem~\ref{thm:main_result} \emph{formally} rules out linear PAM.
	However, both theorems remain valid in that case: 
	Since the underlying stochastic PDE is linear, the analysis only becomes easier and one even gets global existence.\footnote{Under appropriate conditions on the non-linearity~$g$, global existence for gPAM has recently been shown by Chandra, Feltes, and Weber~\cite{chandra_feltes_weber_24}.} 
	Furthermore, one can check that the regularity structures analysis in~\cite[Sec.~2.3]{friz_klose_22}, specifically~\cite[Prop.~2.39]{friz_klose_22} (the estimate on the abstract Taylor terms) and, most importantly,~\cite[Prop.~2.40]{friz_klose_22} (the estimate on the abstract Taylor remainder) remains valid in case of (affine) linear~$g$ -- in fact, the analysis becomes much easier in that case since~$g^{(k)} \equiv 0$ for~$k \geq 2$.
\end{remark}

As an ingredient for the proof of Theorem~\ref{thm:main_result}, we establish a refined version of the result advertised in~\cite[Sec.~0.$4$]{friz_klose_22} of our previous article: The minimiser~$\sh$ given in~\ref{ass:h2} lives in a space of better regularity than just~$\CH$. 

\begin{theorem} \label{thm:reg_minimiser} 
	Let~$\bar{\theta} > 0$ and take~$\kappa > 0$ to be small enough.
	In addition: 
	\begin{enumerate}[label=(\arabic*), itemsep=5pt]
		\item \label{thm:reg_minimiser:ass_1} Let~$F$ satisfy the requirements of Theorem~\ref{thm:precise_laplace} with~$\eta = 1-2\kappa-\bar{\theta}$ and~$N = 0$.\footnote{\label{fn:ass_F}In fact, the reader can check that not \emph{all} of these assumptions on~$F$ are necessary for the conclusion of Theorem~\ref{thm:reg_minimiser} to hold, see Remark~\ref{rmk:ass_F} below. 
			We have decided to present the theorem in this form so as not to clutter up the exposition.} 
		\item \label{thm:reg_minimiser:ass_2} Let the non-linearity~$g$ be as in~Theorem~\ref{thm:precise_laplace} with~$N \in \N_0$, i.e.~$g \in C_b^{N+7}(\R,\R)$.
	\end{enumerate}
	Then, for any tuple~$(\gamma,u_0)$ which satisfies
	\begin{equation} \label{eq:cond_n}
		\gamma \in [0,  N + 4 + \kappa), \quad u_0 \in \CC^{\gamma + 1 -\kappa}(\T^2)
	\end{equation}
	we have
	\begin{equation}  \label{thm:reg_minimiser:eq} 
		\sh \in H^{\theta \wedge \bar{\theta}} \quad \text{for} \quad \theta \in (\gamma,\gamma + 1 - 2\kappa) \, . 
	\end{equation}
\end{theorem}

We believe that the previous result is also of independent interest: 
Thus, we have aimed for the strongest possible result even though that level of generality is not needed for the proof of Theorem~\ref{thm:main_result}. 
In fact, the latter only requires~$\sh \in H^{3\kappa}$, a consequence of the previous theorem with~$\gamma = 0$ and~$\bar{\theta} = 3\kappa$.

\begin{remark} \label{rmk:increased_reg_assumption}
	Note that the minimum~$\theta \wedge \bar{\theta}$ in~\eqref{thm:reg_minimiser:eq} ties the improved $L^2$ Sobolev regularity of the minimiser~$\sh$ to the regularity assumption~\ref{thm:reg_minimiser:ass_1}. 
	As it becomes ever stronger as~$\bar{\theta}$ increases, this is the price one has to pay for the increased regularity of~$\sh$.
	In order to understand in detail in which way the $H^{\theta \wedge \bar{\theta}}$-norm of~$\sh$ depends on the regularity of~$F$ (and various other quantities), see ~\eqref{eq:dep_norm_h_F} as well as \eqref{prop:summary_der_eq:value_constant} and \eqref{prop:summary_der_eq:gronwall} below.
\end{remark}

\begin{remark} \label{rmk:spaces_with_blowup}
	Observe that for increasing values of~$n$, the assumption~\eqref{eq:cond_n} on~$u_0$ becomes ever more restrictive. 
	In particular, the assumptions in~\eqref{eq:cond_n} are stronger than those in Theorem~\ref{thm:precise_laplace} when~$\gamma \geq \kappa$. 
	Perhaps this situation can be avoided by working with the spaces~$\CC^{\alpha,\eta}$ with prescribed behaviour at the origin, see~\cite[Def.~$6.2.$ and Rmk.~$6.3$]{hairer_rs}. 
	However, such considerations would only increase technical complexity, so we chose not to follow this idea any further.
\end{remark}

\paragraph*{Previous work.} 
Precise asymptotics for Laplace-type functionals 
\begin{equation*}
	J(\eps) = \E\sbr[2]{\exp\del[2]{-\frac{F(X^\eps)}{\eps^{2}}}}
\end{equation*}
have been studied by many authors for different choices of stochastic processes~$X^\eps$: The long history of this endeavour has been reviewed in~\cite{friz_klose_22}. Therefore, it is not surprising that a formula for~$a_0$ such as~\eqref{thm:eq_a0} has appeared in the literature before, for example in
\begin{itemize}[itemsep=5pt]
	\item Ben Arous's work~\cite[Thm.~$4$]{ben_arous_laplace} when~$X^\eps$ is a diffusion process, i.e. the solution to a specific type of stochastic differential equation~(SDE),
	\item the articles by Inahama~\cite[Prop.~$8.1$]{inahama06} and then by Inahama and Kawabi~\cite[Thm.~$6.5$]{inahama_kawabi} when~$X^\eps$ solves a related rough differential equation~(RDE) driven by a Brownian rough path, and
	\item a follow-up work by Inahama~\cite{inahama_rde_fbm}, when the RDE is driven by a \emph{fractional} Brownian rough path with Hurst parameter~$H \in (\nicefrac{1}{4},\nicefrac{1}{2})$.
\end{itemize}
In fact, Kusuoka and Osajima~\cite{kusuoka2008remark} dedicated a whole article to the computation of the coefficient~$a_0$ in the expansion for the density of certain Wiener functionals studied by Kusuoka and Stroock~\cite{kusuoka1991precise}.

The result closest in spirit to ours is contained in the above-mentioned work~\cite{inahama_rde_fbm} by Inahama: Although not stated explicitly, the results of~ \cite[Sec.~$6.2$, Prop.~$5.2$, and~Lem.~$5.4$]{inahama_rde_fbm} contain the exact analogue to ours in the fBM-RDE setting.

\paragraph*{Our contribution.} 

In the article at hand, we will consider~$X^\eps = \hat{u}^\eps$ to be the renormalised solution to gPAM, i.e. eq.~\eqref{eq:gpam_shifted_eps} with~$h \equiv 0$.
It is precisely this need for~\emph{renormalisation} and the technical complexity of the corresponding solution theory provided by Hairer's regularity structures~\cite{hairer_rs} that renders the analysis in our case challenging.
Even so, it is striking to note that our formula for~$a_0$ is of the same type as those for the different~$X^\eps$ above when there is \emph{no renormalisation} at all.
Morally, this is in line with the findings of Hairer and Weber~\cite{hairer_weber_ldp} who proved that the renormalisation constants do not appear on the level of the large deviation rate functional.

On a more technical level, renormalisation enters our analysis via the splitting of the operator~$A$ into $q := A-\tilde{A}$ and $\tilde{A}$ and therefore sheds some light on that seemingly \enquote{ad hoc} decomposition: 
\begin{itemize}[itemsep=5pt]
	\item The operator~$\tilde{A}$ is such that~$\tilde{A}[\xi_\d,\xi_\d]$ contains all the singular products that need renormalisation when~$\d \to 0$.
	\item The operator~$q$ contains all the products such that~$q[\xi,\xi]$ makes sense even without renormalisation.\footnote{These products are still quadratic in the noise, hence this statement is non-trivial.}
\end{itemize}
In particular, this observation dovetails nicely with the existing literature: By Goodman's Theorem (see Proposition~\ref{prop:q_trace_class}), we can show that~$q$ is trace-class so that the exponent in~\eqref{thm:eq_a0} is indeed well-defined.
However, the required estimate for Goodman's Theorem to apply is only true because the \emph{minimiser~$\sh$ from}~\ref{ass:h2} \emph{has better than just Cameron--Martin regularity}! 
This fact is somewhat hidden in the above-mentioned works of Ben Arous and Inahama and Kawabi already but, to the best of our knowledge, has never been explored systematically. 
In particular, this is true for our (S)PDE context in which the spatial dependence poses additional challenges and requires marginally stronger assumptions that we have commented on in Remark~\ref{rmk:assumptions} above.

Finally, let us mention that we obtained the formula~\eqref{thm:eq:coeff:a_0} for~$a_0$ in~\cite[Sec.~$3.2$]{friz_klose_22} by a large deviation argument and~\emph{without} any reference to a formula of type~\eqref{thm:eq_a0} -- in contrast to the works of Inahama and Inahama and Kawabi referenced above.
Their argument went the opposite way and crucially relied on a direct proof that~$A$ is a Hilbert--Schmidt operator: 
In our case, this is a simple byproduct of the formula~\eqref{thm:eq:coeff:a_0}, see Theorem~\ref{prop:Qh_second_chaos} below.
This further clarifies and adds to the~\enquote{big picture} of Laplace asymptotics.

\paragraph*{Structure of the article.} 

In section~\ref{sec:improved_reg}, we use a bootstrap argument to show that the minimiser~$\sh$ has better than just Cameron--Martin regularity (Theorem~\ref{thm:reg_minimiser}). 
This is done via analysing gPAM driven by deterministic Sobolev noise (subsection~\ref{sec:gpam_sobolev}), the corresponding equation for the directional derivative~(subsection~\ref{sec:der_eq:gpam}), and, finally, by leveraging~\ref{ass:h2} via first-order optimality~(subsection~\ref{sec:first_order_opt}).
The proof of Theorem~\ref{thm:reg_minimiser} can be found on page~\pageref{pf:reg_min}.

Section~\ref{sec:wic_decomp_hessian} starts with a short review of the connection between exponential moments of elements in the second (inhomogeneous) Wiener--It\^{o} chaos and Carleman--Fredholm determinants, followed by the generic formula for~$a_0$ that this correspondence implies in our case~(proposition~\ref{prop:sec_wic_carleman}).
In the rest of this section, we compute the chaos decomposition of~$\hat{Q}_\sh$.
In subsection~\ref{sec:comp_second_chaos}, we show that the Hilbert--Schmidt operator~$A$ that characterises the second chaos component of the Hessian~\mbox{$\hat{Q}_\sh = Q_\sh(\hbz)$} is given by~$Q_\sh$ applied to canonical lifts of Cameron--Martin functions~(up to a constant, see Theorem~\ref{prop:Qh_second_chaos}).
In subsection~\ref{sec:comp_zeroth_chaos}, we define a Hilbert--Schmidt operator~$\tilde{A}$ such that improved regularity of~$\sh$ implies that $q := A - \tilde{A}$ is trace class by Goodman's Theorem. As a result, we find an explicit formula for~$\hat{Q}_\sh$ that allows to calculate its expected value as a sum of~$\operatorname{Tr}(q)$ and an explicit constant~\mbox{$\lambda \in \R$}~(see proposition~\ref{prop:chaos_decomp_hessian}).
We close that subsection with the proof of theorem~\ref{thm:main_result}, see~page~\pageref{pf:thm_1}.
In subsection~\ref{subsec:eta}, we establish an approximation result for~$\lambda$ that allows to do computations in practice.

Appendix~\ref{app:rs_extended} contains some new constructions within regularity structures and some references to our companion article~\cite{friz_klose_22} that complement Table~\ref{table:symbols} below.
Appendix~\ref{sec:aux_besov} lists auxiliary results on Sobolev and Besov spaces, in particular concerning embeddings, multiplication, and the action of the heat semigroup on those spaces.	

\paragraph*{Notational conventions.}
\begin{itemize}[itemsep=3pt]
	\item By~$a \aac b$ we mean that there exists a constant~$c \in \R$ (which depends on the parameter~$\chi$ if we write~\enquote{$\aac_\chi$}) s.t.~$a \leq c b$. In multiple such estimates, said constant may vary from line to line. 
	\item For~$n \in \N$, we write~$[n] := \{1,\ldots,n\}$.
	\item In addition, we identify the symmetric tensor product~$H^{\odot 2}$ of a Hilbert space~$H$ with the space of symmetric Hilbert--Schmidt bilinear forms (or, equivalently, symmetric Hilbert-Schmidt operators) on~$H$, see e.g.~\cite[Ex.~E.$13$]{janson}.
	\item In contrast to~\cite{friz_klose_22}, we do not colour modelled distributions w.r.t. the extended regularity structure in green because the two-fold extension in~App.~\ref{app:rs_extended} would require another colour.
	\item Unless noted otherwise, all the Besov and Sobolev spaces are considered w.r.t.~$\T^2$. Further, for a Banach space~$X$ and~$T>0$, we write~$C_T X := C([0,T];X)$ for the space of continuous functions on~$[0,T]$ with values in~$X$. For~$v \in C_T X$, we write~$v(s) := v(s,\cdot) \in X$ for~$s \in [0,T]$.
	\item Finally, by~\enquote{$A \rightsquigarrow B$ in eq.~$(15)$} we mean that we exchange the symbol~\enquote{$A$} by the symbol~\enquote{$B$} in equation~$(15)$ but otherwise keep everything the same.
\end{itemize}

\paragraph*{Glossary.}
As mentioned earlier, Section~\ref{sec:wic_decomp_hessian} is set in the framework of the theory of regularity structures; 
we summarise some of the frequently appearing symbols in the following table. 
We also provide a reference to each symbol's definition, mostly within the summary of our companion article~\cite[App.~A]{friz_klose_22}, which the reader may consult for a reference to the original source.
For some more details, including some new definitions, see Appendix~\ref{app:rs_extended}.

\vspace{-1em}
\begin{center} 
	{\small
	\renewcommand{\arraystretch}{1.4}
	\begin{longtable}{p{.08\textwidth}p{.6\textwidth}p{.23\textwidth}}
		\toprule
		\label{table:symbols}
		\centering Object & Meaning & Ref.\\
		\midrule
		\endhead
		\bottomrule
		\endfoot
		\centering $\CX_T$ & \emph{Solution space} for~\eqref{eq:gpam_shifted_eps}, i.e.~$\CX_T := C_T\CC^\eta(\T^2)$ for some \mbox{$\eta \in (\nicefrac{1}{2},1)$} & p.~\pageref{e:solution_space} \\
		\centering $\DD^{\gamma,\eta}(\bz)$ & \emph{Space of singular modelled distributions} for~$\gamma > 0$ and~$\eta \in \R$ w.r.t. a model~$\bz$ which, depending on the context, is an element of~$\MM$, $\MM[\<cm>]$, or~$\MM[\<cm>,\<cm2>]$ & \protect\cite[Def.~A.31]{friz_klose_22}\\
		\centering $E_h$ & \emph{Extension operator}; for~$h \in \CH$, we have~$E_h: \MM \to \MM[\<cm>]$, $\bz \mapsto E_h \bz$ & \protect\cite[Prop.~A.13]{friz_klose_22}\\
		\centering $\tilde{E}_h$ & --- ; for~$h \in \CH$, we have~$\tilde{E}_h: \MM[\<cm>] \to \MM[\<cm>,\<cm2>], \ \bz \mapsto \tilde{E}_h \bz$ & p.~\pageref{def:amended_extension_op} \\
		\centering $E_{(h,k)}$ & --- ; for~$h,k \in \CH$, we have~$E_{(h,k)}: \MM \to \MM[\<cm>,\<cm2>], \ \bz \mapsto E_{(h,k)} \bz$ & p.~\pageref{def:extension_operator}\\
		\centering $\Phi$ & \emph{Abstract solution map} associated to eq.~\eqref{eq:gpam_shifted_eps}; we have $\Phi(\bz)~=~\CR^{\bz}(\CS(\bz))$ and, in particular,~$\hat{u}^\eps_h := \Phi(T_h \hbz)$ & $\CS$ is given in \protect\cite[Eq.~(2.19)]{friz_klose_22} \\
		\centering $\CH$ & \emph{Cameron--Martin space} of 2D spatial white noise~(SWN)~$\xi$, i.e. $\CH = L^2(\T^2)$ & p.~\pageref{thm:precise_laplace} \\
		\centering $\LL$ & \emph{Canonical lifting operator}, which associates to any~$h \in \CH$ a canonical lift~$\LL(h) \in \MM$ in model space & \protect\cite[Def.~A.12]{friz_klose_22}\\
		\centering $\MM$ & \emph{Space of admissible models} w.r.t. to the regularity structure~$\TT$ for gPAM & \protect\cite[Def.~A.6]{friz_klose_22} \\
		\centering $\MM[\<cm>]$ & --- w.r.t. to the \emph{extended} regularity structures~$\TT[\<cm>]$ for gPAM & \textcolor{cameron}{$\MM$}~in~\protect\cite[Def.~A.6]{friz_klose_22} \\
		\centering $\MM[\<cm>,\<cm2>]$ & --- w.r.t. to the \emph{(2-fold) extended} regularity structure~$\TT[\<cm>,\<cm2>]$ for gPAM & p.~\pageref{app:extended_models} \\
		\centering $\CP$ & \emph{Abstract convolution with the heat kernel}; we write~$\CP^{\operatorname{e}_h} := \CP^{E_h \bz}$ and $\CP^{\operatorname{e}_{(h,k)}} := \CP^{E_{(h,k)} \bz}$ if~$\bz \in \MM$ is clear from context & \protect\cite[Sec.~3.7]{cfg} \\
		\centering $\boldsymbol{P}$ & \emph{Polynomial model} & \protect\cite[Sec.~13.3.1]{friz-hairer} \\
		\centering $\CR$ & \emph{Reconstruction operator}; $\CR^{\bz}: \DD^{\gamma,\eta}(\bz) \to \CX_T$ & \protect\cite[Prop.~6.9]{hairer_rs} \\ 
		\centering $T_h$ & \emph{Translation operator}; for~$h \in \CH$, we have~$T_h: \MM \to \MM$, \ $\bz \mapsto T_h \bz$ & \protect\cite[Prop.~A.14]{friz_klose_22}\\
		\centering $\tilde{T}_h$ & --- \emph{for extended models}; for~$h \in \CH$, we have~$T_h: \MM[\<cm>] \to \MM[\<cm>]$, $\bz \mapsto \tilde{T}_h \bz$ & p.~\pageref{def:amended_extension_op} \\
		\centering $\bz$ & Generic element of~$\MM$, $\MM[\<cm>]$, or~$\MM[\<cm>,\<cm2>]$, called an (extended)~\emph{model} & -- \\
		\centering $\hbz^{\xi_\delta}, \hbz$ & \emph{BPHZ} model associated with the mollified SWN~$\xi_\delta$ and its limit as~$\delta \to 0$ & \protect\cite[Sec.~A.2.2]{friz_klose_22} \\
		\caption{Frequently occurring symbols and references to their definitions}
	\end{longtable}
	}
\end{center}

\vspace{-3em}
\paragraph*{Acknowledgements.} 
The author thanks Carlo Bellingeri, Ilya Chevyrev, Peter Friz, and Paul Gassiat for helpful discussions on the content presented in this article.
He also wishes to thank Nils Berglund for various discussions on the subject during the Masterclass and Workshop of the Graduate School on \enquote{Higher Structures Emerging from Renormalisation} (8 to 19 November 2021) at the Erwin Schrödinger International Institute for Mathematics and Physics (ESI) of the University of Vienna.
Further thanks are due to Lucas Broux and David Lee for the clarification on Besov multiplication that lead to Remark~\ref{rmk:besov_multiplication}.
Finally, the author is grateful to the anonymous referee for pointing out a mistake in an earlier version of this article: The process of fixing it lead to a great improvement of the article overall.

\paragraph*{Funding.}
Financial support from the DFG through the International Research Training Group (IRTG) 2544 \enquote{Stochastic Analysis in Interaction} is gratefully acknowledged.
This article has been written while the author was employed at TU Berlin and revised when he was employed at the University of Warwick.

\section{Improved regularity of the minimiser} \label{sec:improved_reg}

We start by analysing the regularity of the minimiser~$\sh$ from~\ref{ass:h2} via pure PDE arguments. 
At the end of this section, we give a proof of Theorem~\ref{thm:reg_minimiser}, see p.~\pageref{pf:reg_min} below.
This result only enters the proof of Theorem~\ref{thm:main_result} in the form of Corollary~\ref{lemma:Nzero} via Proposition~\ref{prop:q_trace_class}. 
Thus, readers only interested in the representation formula for~$a_0$ can directly continue with Section~\ref{sec:wic_decomp_hessian}.

The following subsections develop the arguments we need in order to prove Theorem~\ref{thm:reg_minimiser}.

\subsection{Deterministic gPAM with Sobolev noise} \label{sec:gpam_sobolev}

We begin by considering deterministic gPAM driven by a noise in the~$L^2$-based Sobolev space~$H^\gamma(\T^2)$.
At first, we need a technical lemma about composition operators in the spaces~$\CC^\a(\T^2)$ for~$\a \notin \N$:

\begin{lemma} \label{lem:comp_sobolev} 
	Let~$\a \in (0,\infty) \setminus \N$ and~$\nu \in (\a,\infty)$. Given a function~$g \in \CC^{\nu+1}(\R)$, the composition operator
	\begin{equation*}
		\fC_g: \CC^\a(\T^2) \to \CC^\a(\T^2), \quad \fC_g(f) := g \circ f
	\end{equation*}
	is well-defined and locally Lipschitz continuous.
\end{lemma}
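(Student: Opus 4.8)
The plan is to prove the lemma by reducing the composition statement on the torus to a well-known composition result for Hölder--Zygmund spaces on $\R^d$ (paracomposition / Moser-type estimates), together with a localisation argument to pass from $\R^2$ to $\T^2$.

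\textbf{Step 1: Reduction to $\R^2$.}
First I would fix a smooth partition of unity subordinate to a finite atlas of $\T^2$ and recall that $f \in \CC^\a(\T^2)$ if and only if, after multiplying by a bump function and transporting to a chart, the resulting function lies in $\CC^\a(\R^2) = B^\a_{\infty,\infty}(\R^2)$, with equivalence of (local) norms. Since $\CC^\a(\T^2)$ for $\a \notin \N$ coincides with the Besov space $B^\a_{\infty,\infty}(\T^2)$, it suffices to establish the analogous statement: for $g \in \CC^{\nu+1}(\R)$ with $g(0)=0$ and $\nu > \a$, the map $f \mapsto g\circ f$ is well-defined and locally Lipschitz on $B^\a_{\infty,\infty}(\R^2)$. (The hypothesis $g(0)=0$ is what makes $g\circ f$ decay appropriately / lie in the homogeneous space after cutoff; on the torus it guarantees $g \circ f$ has the right behaviour without a constant obstruction.)

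\textbf{Step 2: Composition estimate on Besov spaces.}
The core input is the classical nonlinear estimate: if $g \in \CC^{k+1}(\R)$ with $k = \lceil \a \rceil$ (so $k+1 \le \nu+1$, using $\nu > \a$) and $g(0)=0$, then for $f \in B^\a_{\infty,\infty} \cap L^\infty$ one has
\begin{equation*}
	\norm{g \circ f}_{B^\a_{\infty,\infty}} \lesssim C\del[1]{\norm{g}_{\CC^{k+1}}, \norm{f}_{L^\infty}} \del[1]{1 + \norm{f}_{B^\a_{\infty,\infty}}}.
\end{equation*}
Since $\CC^\a(\T^2) \hookrightarrow L^\infty(\T^2)$ for $\a > 0$, the $L^\infty$ norm is controlled by the $\CC^\a$ norm, so $T_g$ is well-defined and maps bounded sets to bounded sets. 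This can be proved by a Littlewood--Paley decomposition of $g \circ f$: one writes $g\circ f = \sum_j (g \circ S_{j+1}f - g \circ S_j f)$, applies a finite Taylor expansion of $g$ to order $k$ on each block, and uses that $\Delta_j f$ has size $2^{-j\a}\norm{f}_{\CC^\a}$ together with Bernstein's inequality to control derivatives; the $(k+1)$-th derivative of $g$ absorbs the remainder. I would cite a standard reference (e.g. Bahouri--Chemin--Danchin, or Runst--Sickel) rather than reproduce this.

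\textbf{Step 3: Local Lipschitz continuity.}
For $f_1, f_2$ in a bounded set $B_R := \set{f : \norm{f}_{\CC^\a} \le R}$, write $g\circ f_1 - g \circ f_2 = \del[0]{\int_0^1 g'(f_2 + t(f_1-f_2))\,dt}\,(f_1 - f_2)$. Apply the product estimate in $\CC^\a$ (which holds since $\CC^\a$ is a multiplicative algebra for $\a > 0$), and then the composition estimate of Step 2 applied to $g'$ (which lies in $\CC^\nu$, hence $\CC^k$, consistent with $\nu > \a$) to bound $\norm{\int_0^1 g'(\cdots)\,dt}_{\CC^\a}$ uniformly over $f_1,f_2 \in B_R$; note $g'$ need not vanish at $0$ here, but since we are on a bounded domain / after localisation on the torus the constant term is harmless (on $\T^2$ one simply uses that $\CC^\a(\T^2)$ contains constants). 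This yields $\norm{g\circ f_1 - g \circ f_2}_{\CC^\a} \le C(R)\norm{f_1 - f_2}_{\CC^\a}$.

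\textbf{Main obstacle.}
The only genuinely delicate point is the interplay of the hypotheses: we need exactly $\lceil \a \rceil + 1$ derivatives of $g$ for the composition estimate in $\CC^\a$, and the lemma gives us $\nu + 1 > \a + 1$, so $\CC^{\nu+1} \subseteq \CC^{\lceil \a\rceil + 1}$ provided $\nu \ge \lceil \a \rceil$ — which holds when $\nu > \a$ and $\a \notin \N$, since then $\lceil \a \rceil = \lfloor \a \rfloor + 1 \le \a + 1$ may fail to be $\le \nu$ in the edge case $\nu \in (\a, \lceil\a\rceil)$. I would handle this by noting that $\CC^{\nu}$-regularity of $g$ with $\nu > \a$ still suffices for the composition estimate in $\CC^\a$ via the sharp form of the Moser estimate (which only needs $g \in \CC^\nu_{loc}$ for some $\nu > \a$, not an integer number of derivatives), so that $g \in \CC^{\nu+1}$ is in fact slightly more than enough — and this extra room is precisely what makes the Lipschitz step (Step 3), which costs one derivative, go through. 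The bookkeeping of which space $g$, $g'$ land in, and the $g(0)=0$ versus no-vanishing distinction between Steps 2 and 3, is the part requiring care; everything else is standard.
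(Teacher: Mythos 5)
Your argument is correct, but it takes a genuinely different route from the paper. The paper disposes of the case $\a \in (0,1)$ as obvious and, for $\a > 1$, quotes the composition theorem of Runst and Sickel (Thm.~$5.5.2/1$ combined with Prop.~$5.5.1/1$ to relax the smoothness of $g$ from $g' \in \CC^\infty$ to $g \in \CC^{\nu+1}$), which delivers well-definedness \emph{and} local Lipschitz continuity in one stroke. You instead (i) localise from $\T^2$ to $\R^2$, (ii) invoke only the boundedness (Moser-type) estimate $\norm{g\circ f}_{B^\a_{\infty,\infty}} \lesssim C(\norm{f}_{L^\infty})(1+\norm{f}_{B^\a_{\infty,\infty}})$, and (iii) manufacture the Lipschitz bound yourself from the integral mean-value identity, the algebra property of $\CC^\a$ for $\a>0$, and the boundedness estimate applied to $g'$. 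That derivation of Lipschitz continuity from boundedness is standard and valid, and it makes transparent why one extra derivative of $g$ is consumed — something the paper's citation hides. Your closing discussion of the derivative count also resolves correctly: the sharp acting condition needs only a little more than $\a$ derivatives of $g$, so $g' \in \CC^\nu$ with $\nu > \a$ suffices in Step 3 even when $\nu < \lceil\a\rceil$.

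Two minor wrinkles, neither fatal. First, the localisation in Step 1 is unnecessary and, as phrased, slightly off: cutoff and composition do not commute ($g\circ(\chi f) \neq \chi\,(g\circ f)$), so one must compose first and localise afterwards; in fact for the $L^\infty$-based spaces $B^\a_{\infty,\infty}(\T^2)$ the periodic Littlewood--Paley calculus gives the composition estimate directly, with no passage to $\R^2$. Second, on the torus (and indeed for $p=\infty$ on $\R^2$) constants belong to $\CC^\a$, so $g(0)=0$ is not actually load-bearing for this lemma — it is imposed to match the form of the cited theorems and, in the paper, because the same hypothesis is genuinely needed elsewhere (cf.\ the estimate $\norm{g(v(s))}_X \le L(2M)\,2M$ in the proof of \thref{prop:ex_det_gpam_Hgamma}); your parenthetical justification of it in Step 1 is therefore a little misleading but harmless.
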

We present two proofs for this lemma: 
One which specialises a general statement about Besov spaces from~\cite{runst_sickel} to the case at hand and one which specialises the general results in~\cite{hairer_rs} to the simple case of the \emph{polynomial regularity structure}.
We emphasise that one can translate the second proof into a language that does not require regularity structures at all – but its framework is convenient for us in order to reference the specific results which we employ.
In addition, it provides a simpler first instance of the more general statements which we are going to encounter in Section~\ref{sec:wic_decomp_hessian}.

Recall the definition of the spaces~$\CC^\alpha(\T^2)$ provided in Appendix~\ref{sec:aux_besov}. 
One can check that, for~$\alpha = n + \gamma$ with~$n \in \N_0$ and~$\gamma \in (0,1)$, one can equivalently define~$\CC^{n+\gamma}(\T^2)$ as the space of~$n$ times continuously differentiable functions \mbox{$f : \T^2 \to \R$} such that their derivatives of order~$n$ are $\gamma$-Hölder continuous.

\begin{proof}[Proof \textnormal{(via textbook reference)}]
	Let us first assume that~$g(0) = 0$. 
	For~$\a \in (0,1)$, the statement is obvious. 
	Let now $\a > 1$.
	Note that for our choice of~$\a$, we have~$B_{\infty,\infty}^\a \equiv \CC^\a$ and thus~$\CC^\a \embed L^\infty$ such that the claim is a special case of~\cite[Thm.~$5.5.2/1$]{runst_sickel}. 
	Note that this result actually assumes~$g' \in \CC^\infty(\R)$ but~\cite[Prop.~$5.5.1/1$]{runst_sickel} shows that one can weaken this assumption to the one we claimed given that~$\nu > 0$.
	
	In case~$g(0) \neq 0$, we define~$\tilde{g} := g - g(0)$ and note that the claim is true for~$\fC_{\tilde{g}}$ by what we have already proved.
	We further note that
	%
	$\fC_{\tilde{g}}(f) 
	= 
	\fC_{g}(f) - g(0)$ 	
	and, accordingly,
	\begin{equation*} \label{aux_comp_op}
		\fC_{\tilde{g}}(f)(x) - \fC_{\tilde{g}}(f)(y) =  \fC_{g}(f)(x) - \fC_{g}(f)(y), \quad
		\fC_{\tilde{g}}(f_1) - \fC_{\tilde{g}}(f_2)
		=
		\fC_{g}(f_1) - \fC_{g}(f_2).
	\end{equation*}
	In case~$\alpha \in (0,1)$, a direct computation based on the previous observations shows that the claim is also true for~$\fC_g$. 
	For~$\alpha > 1$, observe that~$\tilde{g}^{(k)} = g^{(k)}$ for all~$k \in \N_{\geq 1}$;
	the claim for~$\fC_g$ then follows from the corresponding statement for~$\fC_{\tilde{g}}$ by definition of the spaces~$\CC^\alpha$. 
\end{proof}

The following proof builds upon the \emph{polynomial regularity structure}~$\bar{\CT}$ and the \emph{polynomial model}~$\boldsymbol{P}$; 
for a pedagogical introduction to these notions (and regularity structures more generally), see~\cite[Sec.~13.2.1]{friz-hairer} and \cite[Sec.~13.3.1]{friz-hairer}.
We further remind the reader that all the other symbols appearing in the proof are listed in Table~\ref{table:symbols}, incl. references to their definitions.

\begin{proof}[Proof \textnormal{(via regularity structures)}]
	As before, we write $\alpha = n + \gamma$ for~$n \in \N_0$ and~$\gamma \in (0,1)$ and recall the content of~\cite[Prop.~13.16]{friz-hairer}:
	\begin{enumerate}[label=(\roman*), itemsep=5pt]
		\item \label{e:composition_rs_i} Let~$f \in \CC^\alpha$. Defining 
		\begin{equation} \label{e:composition_rs_1}
			F(x) := f(x)\mathbf{1} + \sum_{1 \leq \abs[0]{k} \leq n} \frac{f^{(k)}(x)}{k!} \boldsymbol{X}^k
		\end{equation}
		one then has~$F \in \DD^\alpha(\boldsymbol{P})$.
		\item \label{e:composition_rs_ii} Conversely, if~$\tilde{F} \in \DD^\alpha(\boldsymbol{P})$, then~$f := \scal{\tilde{F},\mathbf{1}}$ is in~$\CC^{\alpha}$ and~$\tilde{F} = F$ where the latter is defined from~$f$ via~\eqref{e:composition_rs_1}.
	\end{enumerate}
	We now let~$f \in \CC^\alpha$, define~$F$ as in~\eqref{e:composition_rs_1}, and also introduce the \emph{abstract composition operator}
	\begin{equation*}
		\fC_G(F)(x) := (G \circ F)(x) := \sum_{\ell \geq 0} \frac{1}{\ell!} g^{(\ell)}(f(x))\del[1]{F(x) - f(x)\mathbf{1}}^{\ell} \, .
	\end{equation*}
	We note the following:
	\begin{itemize}[itemsep=5pt]
		\item By definition, $F(x) - f(x)\mathbf{1} \in T_{\geq 1}$ since the polynomials~$X_i$ for~$i = 1,2$ are of degree~$1$.
		\item The polynomial regularity structure is a function-like sector (see~\cite[Def.~2.5]{hairer_rs}) within itself.
	\end{itemize}
	As a consequence of the first observation and our choice of~$\nu$,~\cite[Prop.~14.8]{friz-hairer} shows that~$\fC_G$ is locally Lipschitz con\-ti\-nuous from~$\DD^\alpha(\boldsymbol{P})$ into itself.
	Combined with the second observation and~\ref{e:composition_rs_ii} above, this implies that
	\begin{equation} \label{e:reconstruction}
		\CR^{\boldsymbol{P}}(F) = \scal{F,\mathbf{1}} = f \in \CC^\alpha, \qquad
		\CR^{\boldsymbol{P}}(\fC_G(F)) = \scal{\fC_G(F),\boldsymbol{1}} = \fC_g(f) \in \CC^\alpha
	\end{equation}
	where, in both cases, the first identities in~\eqref{e:reconstruction} are due to~\cite[Prop.~3.28]{hairer_rs}.
	
	Finally, observe that the $\DD^\alpha(\boldsymbol{P})$-norm of~$F$ defined from~$f \in \CC^\alpha$ via~\eqref{e:composition_rs_1} and the $\CC^\alpha$-norm of~$f$ itself are equivalent.
	Since the reconstruction operator~$\CR^{\boldsymbol{P}}$ is also locally Lipschitz continuous by~\cite[Thm.~3.10]{hairer_rs}, we then find
	\begin{equation*}
		\norm[0]{\fC_g(f_1) - \fC_g(f_2)}_{\CC^\alpha}
		=
		\norm[0]{\CR^{\boldsymbol{P}}(\fC_G(F_1)) - \CR^{\boldsymbol{P}}(\fC_G(F_2))}_{\CC^\alpha}
		\lesssim_M
		\norm[0]{F_1 - F_2}_{\DD^\alpha(\boldsymbol{P})}
		\lesssim
		\norm[0]{f_1 - f_2}_{\CC^\alpha}
	\end{equation*}
	which holds uniformly over all~$f_1,f_2 \in \CC^\alpha$ with~$\norm[0]{f_1}_{\CC^\alpha}, \norm[0]{f_2}_{\CC^\alpha} \leq M$ 
	and, for~$i = 1,2$, $F_i$ is defined from~$f_i$ via~\eqref{e:composition_rs_1}. 
\end{proof}

\begin{remark}
	As is noted in~\cite[Rmk.~$5.5.2/1$]{runst_sickel}, \emph{global} Lipschitz continuity of~$\fC_g$ is not to be expected 
	unless~$g(x) = cx$ for some~$c \in \R$. This, of course, would correspond to the \enquote{ordinary} (rather than \enquote{generalised}) PAM driven by a SWN with covariance~$c^2 \d(x-y)$, cf. also Remark~\ref{rmk:linear_pam}.
\end{remark}

The following proposition is a straight-forward adaptation of classical arguments, see for example~\cite[Thm.~$4.3.4$]{cazenave_haraux}. We spell them out for the sake of completeness; see also~\cite[Prop.~A.1]{chouk_friz} for the case~$\gamma = 0$.

\begin{proposition}\label{prop:ex_det_gpam_Hgamma}
	Let $\kappa > 0$,~$N \in \N_0$, and~$\gamma \geq 0$ such that~$\gamma  + 1 - \kappa \notin \N$ and $\gamma < N + 5 + \kappa$.
	Then, for~$g \in C_{b}^{N+7}(\R)$, 
	$u_0 \in \CC^{\gamma+1-\kappa}(\T^2)$, and $h \in H^\gamma(\T^2)$, there exists a maximal time~$T_\star = T_\star(u_0,\norm[0]{h}_{H^{\gamma}},\gamma) > 0$ such that for each~$T < T_\star$, the solution to the equation
	\begin{equation} \label{prop:ex_det_gpam_Hgamma:eq}
		(\partial_t - \Delta) w_h = g(w_h)h, \quad w_h(0,\cdot) = u_0,
	\end{equation}
	satisfies~$w_h \in C_T \CC^{\gamma + 1 - \kappa}(\T^2)$.
\end{proposition}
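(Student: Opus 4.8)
The plan is to solve~\eqref{prop:ex_det_gpam_Hgamma:eq} by Banach's fixed-point theorem applied to its mild formulation. Writing~$P_t := e^{t\Delta}$ for the heat semigroup on~$\T^2$, a function~$w$ solves~\eqref{prop:ex_det_gpam_Hgamma:eq} precisely when it is a fixed point of
\begin{equation*}
	\CM_T(w)(t) := P_t u_0 + \int_0^t P_{t-s}\del[1]{g(w(s))\,h}\dif s, \qquad t \in [0,T].
\end{equation*}
I would run the iteration on the closed ball~$\CB_R := \cbr[0]{w \in \CC_T\CC^{\gamma+1-\kappa}(\T^2) : \norm{w}_{\CC_T\CC^{\gamma+1-\kappa}} \leq R}$ with~$R := 2\sup_{t\ge 0}\norm{P_t u_0}_{\CC^{\gamma+1-\kappa}}$, which is finite and~$\lesssim \norm{u_0}_{\CC^{\gamma+1-\kappa}}$ because~$P_t$ is uniformly bounded on the Hölder--Besov scale. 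One then takes~$T_\star$ as the supremum of the times for which the estimates below close; the passage to the \emph{maximal} existence time is the usual continuation argument, yielding the blow-up alternative~$\limsup_{t\uparrow T_\star}\norm{w_h(t)}_{\CC^{\gamma+1-\kappa}} = \infty$ when~$T_\star < \infty$, and making~$T_\star$ depend on~$u_0$ and on~$\norm{h}_{H^\gamma}$ (and on~$\gamma,\kappa,g$).

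The contraction estimate rests on three ingredients, each applied to the integrand~$P_{t-s}[g(w(s))h]$. First, the \emph{composition estimate}: the hypothesis~$\gamma < N+5+\kappa$ gives~$\gamma+1-\kappa < N+6$, so we may fix~$\nu \in (\gamma+1-\kappa,\,N+6)$; since~$g \in \CC_b^{N+7}(\R) \subseteq \CC^{\nu+1}(\R)$, $g(0)=0$, and~$\gamma+1-\kappa \notin \N$, \thref{lem:comp_sobolev} shows that~$T_g$ maps~$\CC^{\gamma+1-\kappa}$ into itself, is locally Lipschitz, and (using~$g(0)=0$) satisfies~$\norm{g(v)}_{\CC^{\gamma+1-\kappa}} \leq C(\norm{v}_{\CC^{\gamma+1-\kappa}})\norm{v}_{\CC^{\gamma+1-\kappa}}$. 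Second, the \emph{product estimate} (appendix~\ref{sec:aux_besov}): for~$\gamma \geq 0$ and~$\kappa$ small one has~$\gamma+1-\kappa > 0$ and~$(\gamma+1-\kappa)+\gamma > 0$, so pointwise multiplication is bounded~$\CC^{\gamma+1-\kappa}\times H^\gamma \to H^\gamma$. Third, the \emph{Schauder estimate} (appendix~\ref{sec:aux_besov}): via the embedding~$H^\gamma \embed \CC^{\gamma-1}$ and the smoothing of~$P_t$, $\norm{P_{t-s}f}_{\CC^{\gamma+1-\kappa}} \lesssim (t-s)^{-1+\nicefrac{\kappa}{2}}\norm{f}_{H^\gamma}$, with~$-1+\nicefrac{\kappa}{2} > -1$ integrable precisely because~$\kappa > 0$.

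Chaining these three bounds, for~$w, w_1, w_2 \in \CB_R$ one obtains~$\norm{\CM_T(w)(t) - P_t u_0}_{\CC^{\gamma+1-\kappa}} \lesssim C(R)\,\norm{h}_{H^\gamma}\,t^{\nicefrac{\kappa}{2}}$ and, using the local Lipschitz constants from the first two ingredients, $\norm{\CM_T(w_1) - \CM_T(w_2)}_{\CC_T\CC^{\gamma+1-\kappa}} \lesssim C(R)\,\norm{h}_{H^\gamma}\,T^{\nicefrac{\kappa}{2}}\,\norm{w_1-w_2}_{\CC_T\CC^{\gamma+1-\kappa}}$. Hence, for~$T$ small enough, $\CM_T$ is a contraction of~$\CB_R$ into itself and has a unique fixed point there; local uniqueness together with the continuation argument upgrades this to uniqueness in all of~$\CC_T\CC^{\gamma+1-\kappa}$ and to the maximal time~$T_\star$. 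For the claimed time-continuity: the Duhamel term actually lies in~$\CC_T\CC^{\gamma+1-\kappa+\delta}$ for some small~$\delta \in (0,\kappa)$ — the slack in the singularity exponent making~$t\mapsto\int_0^t P_{t-s}[\cdots]\dif s$ continuous into~$\CC^{\gamma+1-\kappa}$ up to and including~$t=0$ — while~$t\mapsto P_t u_0$ is handled by standard heat-semigroup facts on this scale, cf.~\cite[Thm.~$4.3.4$]{cazenave_haraux}.

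The only point requiring genuine care, beyond routine bookkeeping, is the mutual compatibility of the regularity exponents across the three ingredients: one needs simultaneously enough derivatives of~$g$ for~$T_g$ to act at regularity~$\gamma+1-\kappa$ (which is exactly what~$\gamma < N+5+\kappa$ provides, through the window~$\nu\in(\gamma+1-\kappa,N+6)$), strictly positive total regularity so that the product~$\CC^{\gamma+1-\kappa}\cdot H^\gamma$ is defined and still lands in~$H^\gamma$, and a heat-flow gain of~$2-\kappa$ derivatives at the cost of an \emph{integrable} power~$(t-s)^{-1+\kappa/2}$ of the elapsed time (which is exactly what~$\kappa > 0$ provides). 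Once these constraints are checked to be consistent under the stated hypotheses, everything else is classical, as in~\cite[Thm.~$4.3.4$]{cazenave_haraux}.
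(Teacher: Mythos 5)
Your proposal is correct and follows essentially the same route as the paper: the mild (Duhamel) formulation solved by Banach's fixed-point theorem on a ball in $\CC_T\CC^{\gamma+1-\kappa}(\T^2)$, combining \thref{lem:comp_sobolev} (with the window $\gamma+1-\kappa<\nu+1\le N+7$ supplied by $\gamma<N+5+\kappa$), the Besov multiplication estimate landing in $H^\gamma\embed\CC^{\gamma-1}$, and the heat-flow gain at the integrable cost $(t-s)^{\frac{1}{2}(-2+\kappa)}$. The only differences are cosmetic: the paper proves uniqueness directly via Gronwall and defines $T_\star$ as a supremum rather than through the blow-up alternative, whereas you invoke the standard continuation argument — both are fine.
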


\begin{proof}
	Let~$X := \CC^{\gamma+1-\kappa}(\T^2)$ and note that the assumptions on~$\gamma$ are precisely such that the conditions from Lemma~\ref{lem:comp_sobolev} are met with~$\alpha := \gamma + 1 - \kappa$ and~$\nu := N + 6$.
	In mild formulation, equation~\eqref{prop:ex_det_gpam_Hgamma:eq} reads
	\begin{equation}
		w_h(t) = P_t u_0 + \int_0^t P_{t-s} \sbr[1]{g(w_h(s)) h} \dif s, 
		\label{eq:w_h:mild_form}
	\end{equation}
	where $P_t = e^{t\Delta}$ denotes the heat semigroup and~$t \in [0,T]$ for some~$T > 0$, the existence of which is yet to be established. 
	For proving~\emph{(local) existence} of~$w_h$, let~$M \geq \norm[0]{u_0}_X$ and define
	\begin{equation*}
		B_{T,M} := \{u \in C_T X: \ \norm[0]{u}_{C_T X} \leq M\}
	\end{equation*}
	to be the closed ball of radius~$M$ in~$C_T X$. For~$v \in B_{T,2M}$, we further define
	\begin{equation*}
		\Gamma_T(v)(t) := P_t u_0 + \int_0^t P_{t-s}\sbr[0]{g(v(s))h} \dif s, \quad t \in [0,T].
	\end{equation*}
	By Lemma~\ref{lem:comp_sobolev} and standard multiplication theorems in Besov spaces~(see Proposition~\ref{prop:mult_besov}) we have	
	\begin{equation}
		g(v(s)) \in X, \qquad
		g(v(s)) h \in H^{\gamma} 
		\embed \CC^{\gamma-1-\nicefrac{\kappa}{2}},
		\qquad
		\norm[0]{g(v(s))h}_{\CC^{\gamma - 1-\nicefrac{\kappa}{2}}} \leq C \norm[0]{g(v(s))}_X \norm[0]{h}_{H^{\gamma}}
		\label{prop:ex_det_gpam_Hgamma:pf_aux1}
	\end{equation}	
	for all~$s \in [0,T]$ and some~$C > 0$.
	Next, note that we have~$g(v(s)) = g(v(s)) - g(0)$ and thus
	\begin{equation*}
		\norm[0]{g(v(s))}_{X} \leq L (2M) 2M ,
	\end{equation*}
	where~$L(2M)$ denotes the local Lipschitz constant from Lemma~\ref{lem:comp_sobolev}.
	The regularising effect of the heat semigroup~(see Proposition~\ref{prop:heat_besov}) then allows to obtain the estimate
	\begin{align*}
		\norm[0]{\Gamma_T(v)(t)}_X
		& \leq
		\norm[0]{u_0}_X
		+
		\int_0^t \norm[0]{P_{t-s} \sbr[0]{g(v(s)) h}}_X \dif s 
		\leq 
		M
		+
		\int_0^t (t-s)^{\frac{1}{2}(-2+\frac{\kappa}{2})} \norm[0]{g(v(s)) h}_{\CC^{\gamma - 1-\nicefrac{\kappa}{2}}} \dif s \\
		& \leq 
		M + C \frac{4}{\kappa} T^{\frac{\kappa}{4}} \norm[0]{g(v(s))}_{X} \norm[0]{h}_{H^\gamma} 
		\leq 
		M + C \frac{4}{\kappa} T^{\frac{\kappa}{4}} L(2M) \norm[0]{h}_{H^\gamma} 2M.
	\end{align*}
	We then choose~$T= T_M > 0$ such that
	\begin{equation}
		C \frac{4}{\kappa} T_M^{\frac{\kappa}{4}}  L(2M) \norm[0]{h}_{H^\gamma} \leq \frac{1}{2}
		\label{eq:choice_time}
	\end{equation}
	which implies that~$\Gamma_{T_M}$ maps~$B_{T_M,2M}$ into itself.
	Similarly, for~$v,w \in B_{T,2M}$ we find
	\begin{equation*}
		\norm[0]{\Gamma_{T_M}(w)(t) - \Gamma_{T_M}(v)(t)}_{X}
		\leq
		C \frac{4}{\kappa}  T_M^{\frac{\kappa}{4}} L(2M) \norm[0]{h}_{H^\gamma} \norm[0]{w-v}_{\CC_T X} 
		\leq
		\frac{1}{2} \norm[0]{w-v}_{\CC_T X}.
	\end{equation*}
	This shows that~$\Gamma_{T_M}$ is a contraction in~$B_{T_M,2M}$ and, combined with the previous result, therefore admits a unique fixed-point. 
	As a consequence, we know that
	\begin{equation*}
		T_\star \equiv T_{\star}(u_0,\norm[0]{\sh}_{H^{\gamma}},\gamma) := \sup\{T > 0: \eqref{eq:w_h:mild_form} \ \text{admits a solution in} \in \CC_T X \}
	\end{equation*}
	satisfies~$T_\star > 0$.\footnote{Note that~$T_\star$ depends on~$\gamma$ via~$X$ and on~$\norm[0]{\sh}_{H^\gamma}$ via the estimate~\eqref{eq:choice_time}.} We can construct a maximal solution~$w_h \in \CC([0,T_\star);X)$ by iterating the procedure above since \emph{solutions are unique}, as we prove now.
	
	Assume there exists some~$T>0$ such that~$v,w \in \CC_T X$ both solve~\eqref{eq:w_h:mild_form}. Then we define
	\begin{equation*}
		M := \sup_{t \in [0,T]} \del[1]{\thinspace\norm[0]{v(t)}_X \vee	\norm[0]{w(t)}_X}		
	\end{equation*}
	so that~$v,w \in B_{T,M}$. As before, we then have the estimate
	\begin{equation*}
		\norm[0]{v(t) - w(t)}_X 
		\leq 
		C  \norm{h}_{H^\gamma}  L(M) \int_0^t (t-s)^{\frac{1}{2}(-2 + \frac{\kappa}{2})} \norm[0]{v(s) - w(s)}_X \dif s
	\end{equation*}
	which implies~$v = w$ by Gronwall's inequality.
\end{proof}

\begin{remark} \label{rmk:existence_time}
	In what follows, we will always assume that~$t \in [0,T]$ and~$s \in [0,t]$ where~$T < T_\star$ .
	In the proof of Theorem~\ref{thm:reg_minimiser} presented on p.~\pageref{pf:reg_min} below, each induction step will see~$\sh \in H^{\gamma}(\T^2)$ gain regularity, that is,~$\gamma$ will successively get larger. 
	As a consequence,~$T_\star$ will potentially shrink as it is easily seen via~\eqref{eq:choice_time} to be a decreasing function of~$\gamma$. 
	However, we can only ever take \emph{finitely many} induction steps due to the constraints on~$\gamma$ in~\eqref{eq:cond_n} or the constraint imposed by~$\bar{\theta}$ in Theorem~\ref{thm:reg_minimiser}, assumption~\ref{thm:reg_minimiser:ass_1}; 
	as a consequence, we will always have~$T_\star > 0$.
	
	In addition, we emphasise that the equations for~$v_{\sh,k}$ in~\eqref{eq:vhk_mild} as well as that for~$v_{\sh,k,l}^{(\<cms>)}$ in~\eqref{e:vhkl_cm} (which we will analyse in Propositions~\ref{prop:ex_det_gpam_Hgamma} resp.~\ref{prop:q_trace_class} below) are \emph{linear} equations:
	As a result, they will also exist up to time~$T < T_\star$ since~$w_{\sh}$ enters their respective fixed-point equations as an input.  
\end{remark}

\subsection{The derivative equation} \label{sec:der_eq:gpam}

In~\cite[Thm.~$2$]{friz_klose_22}, we proved\footnote{In fact, we established a Taylor expansion for~$\hat{u}_h^\eps$ with \emph{estimates} on both the Taylor terms and, crucially, its remainder.} that the solution~$\hat{u}^\eps_h$ to~\eqref{eq:gpam_shifted_eps} is of class~$\CC^{N+3}$ in the parameter~\mbox{$\eps \ll 1$}. For~$i \in [N+3]$ and~\mbox{$\bz \in \MM$}, we may thus set
\begin{equation*}
	u_h^{(i)}(\bz) := \partial_\eps^{i}\sVert[0]_{\eps = 0} u_h^\eps(\bz), \quad
	\hat{u}_h^{(i)} := u_h^{(i)}(\hbz).
\end{equation*}
In this subsection, we study the directional derivative
\begin{equation} \label{e:def_vhk}
	v_{h,k} := \partial_\eps\sVert[0]_{\eps = 0} w_{h + \eps k}, \quad h, k \in \CH \,;
\end{equation}
it can easily be seen to satisfy the equality
\begin{equation} \label{der_eq:comparison}
	v_{h,k} = u_h^{(1)}(\LL(k)).
\end{equation}
which justifies that~$v_{h,k}$ exists in the first place.
In addition, observe that the latter satisfies the deterministic PDE
\begin{equation} \label{eq:vhk_pde}
	(\partial_t - \Delta) v_{h,k} = g'(w_h)v_{h,k}h + g(w_h)k, \quad v_{h,k}(0,\cdot) = 0
\end{equation}
which, for~$t \in [0,T]$, reads in mild formulation as follows:
\begin{equation} \label{eq:vhk_mild}
	v_{h,k}(t) = \int_0^t P_{t-s} \sbr[1]{g'(w_h(s)) v_{h,k}(s) h} \dif s + \int_0^t P_{t-s} \sbr[1]{g(w_h(s))k} \dif s
	=: I_{h,k}(t) + J_{h,k}(t)
\end{equation}

\begin{remark} \label{rmk:extension_v_h_k}
	As we shall see in Proposition~\ref{prop:summary_der_eq} below, the PDE~\eqref{eq:vhk_pde} and its corresponding mild formulation~\eqref{eq:vhk_mild} are \emph{not} restricted to the case~$k \in \CH = L^2(\T^2)$.
	In fact, if we fix~$h \in H^\gamma$ for~$\gamma \geq 0$, we may use this observation to \emph{extend} our earlier definition of~$v_{h,k}$ in~\eqref{e:def_vhk} to~$k \in H^{-\theta}$ where the value of~$\theta$ is constrained by that of~$\gamma$, see eq.~\eqref{e:choice_theta} below.
	Note that this does not require~$w_{h+\eps k}$ to be well-defined for~$h \in H^\gamma$, $k \in H^{-\theta}$, and~$\eps > 0$.
\end{remark}

Suppose now that $h \in H^\gamma$ for $\gamma \geq 0$ and let $k \in H^{-\theta} = \del[0]{H^\theta}^*$ for some $\theta > \gamma$; eligible choices for~$\theta$ are presented in~\eqref{e:choice_theta} below.
Our goal is to quantify the regularity of~$v_{h,k}$ and prove an estimate of type~$\norm[0]{v_{h,k}}_Y \aac \norm[0]{k}_{H^{-\theta}}$ for a suitable space of continuous functions~$Y = Y(\theta)$, see Proposition~\ref{prop:summary_der_eq} below for a precise statement.

This estimate will then be used in Subsection~\ref{sec:first_order_opt} below: 
In the specific case where~$h = \sh \in H^{\gamma}$ is the minimiser from Hypothesis~\ref{ass:h2}, we will combine it with additional arguments to show that~$\sh \in H^{\theta}$ as long as~$\theta \leq \bar{\theta}$ where the latter is as in Theorem~\ref{thm:reg_minimiser}.
In other words: The minimiser~$\sh$ gains~$\theta - \gamma > 0$ units in $L^2$ Sobolev regularity; the constraint that~$\theta \leq \bar{\theta}$ is explained in Remark~\ref{rmk:increased_reg_assumption} above.

We present the main result of this subsection:

\begin{proposition} \label{prop:summary_der_eq}
Let~$T, \kappa, N, \gamma, g,$ and~$u_0$ as in Proposition~\ref{prop:ex_det_gpam_Hgamma}. 
In addition, assume that 
the more restrictive bound $\gamma < N + 4 + \kappa$ is satisfied. 
Now let~$h \in H^\gamma$ and recall the definition of~$\bar{\theta} > 0$ in Theorem~\ref{thm:reg_minimiser}, assumption~\ref{thm:reg_minimiser:ass_1}, above. 
Then, for any~
\begin{equation} \label{e:choice_theta}
	\theta \in I(\gamma)
	:=
	(\gamma, \gamma + 1 - 2\kappa)
\end{equation}
and~$k \in H^{-\theta}$, we then have~$v_{h,k} \in C_T  \CC^{-\theta +1-2\kappa}$ as well as the estimate
\begin{equation}
	\norm[0]{v_{h,k}}_{C_T \CC^{-\theta +1-2\kappa}}
	\aac \norm[0]{k}_{H^{-\theta}}
	\label{prop:summary_der_eq:estimate}
\end{equation}
where the implicit constant~$\tilde{C}$ depends on~$g,g',h,w_h,T,\kappa,\gamma,$ and~$\theta$;
its precise value is given in eq.~\eqref{prop:summary_der_eq:gronwall} below.
\end{proposition}

\begin{proof}
The pattern of proof is similar to that of Proposition~\ref{prop:ex_det_gpam_Hgamma}: 
One shows the invariance of suitable balls and then establishes the contraction property of the fixed-point map associated with the mild formulation~\eqref{eq:vhk_mild}. 
Since it is a \emph{linear} equation, the only factor that constrains the existence time is the external input~$w_h$ given in~\eqref{prop:ex_det_gpam_Hgamma:eq}, hence our choice of the time horizon~$T$ in the statement of the proposition.

Since the fixed-point argument (which includes the continuity in time) is quite standard, we only analyse the products in~\eqref{eq:vhk_mild} to find the correct \emph{spatial} regularity of the solution~$v_{h,k}$.
To this end, we analyse the terms $I_{h,k}$ and $J_{h,k}$ in~\eqref{eq:vhk_mild} separately, still working in the setting of Proposition~\ref{prop:ex_det_gpam_Hgamma}, albeit with~$\gamma < N + 4 + \kappa$. 
The latter guarantees that Lemma~\ref{lem:comp_sobolev} is applicable to~$g' \in C_{b}^{N+6}(\R)$ with~$\alpha := \gamma + 1 - \kappa$ and~$\nu := N+5$.

\paragraph*{--- The inhomogeneous term $J_{h,k}$.}
We analyse the product~$g(w_h(s))k$ in~\eqref{eq:vhk_mild} first. Since~$k \in H^{-\theta}$ by assumption and $g(w_h(s)) \in \CC^{\gamma + 1 - \kappa}$ by Proposition~\ref{prop:ex_det_gpam_Hgamma} and Lemma~\ref{lem:comp_sobolev} above, we require 
$\gamma + 1 - \kappa - \theta > 0$
for the product
\begin{equation}
	g(w_h(s))k \in H^{-\theta} \embed \CC^{-\theta - 1 -\kappa}
	\label{eq:prod_inhomog_term}
\end{equation}
to be well-defined. Therefore, since we also want to have~$\theta > \gamma$, we pick up the constraint
\begin{equation} \label{e:theta_1}
	\theta \in (\gamma, \gamma + 1 -\kappa)
\end{equation} 
Using the smoothing effect of the heat semigroup $P_t$ again, we obtain the estimate
\begin{equs}[][eq:aux_1]
	\thinspace &
	\norm[0]{J_{h,k}(t)}_{\CC^{-\theta + 1 - 2\kappa}} 
	= \,
	\norm[3]{\int_0^t P_{t-s} [g(w_h(s))k] \dif s}_{\CC^{-\theta + 1 - 2\kappa}} 
	\aac
	\int_0^t \norm[0]{P_{t-s} [g(w_h(s))k]}_{\CC^{-\theta + 1 - 2\kappa}} \dif s \notag \\
	\aac \thinspace &
	\int_0^t (t-s)^{\frac{1}{2}(-2 + \kappa)} \norm[0]{g(w_h(s))k}_{\CC^{-\theta - 1 - \kappa}} \dif s 
	\aac
	\frac{2}{\kappa} T^{\frac{\kappa}{2}}
	\norm[0]{g(w_h)}_{\CC_T \CC^{\gamma + 1 -\kappa}} \norm[0]{k}_{H^{-\theta}}
\end{equs}
which implies that, \emph{at best}, we have 
\begin{equation} \label{e:ass_reg_vhk}
	v_{h,k} \in  C_T \CC^{-\theta + 1 - 2\kappa}
\end{equation}
which is our standing assumption going forward.

\paragraph*{--- The homogeneous term $I_{h,k}$.}
%
Proposition~\ref{prop:ex_det_gpam_Hgamma} and Lemma~\ref{lem:comp_sobolev} imply that 
\begin{equation*}
	g'(w_h(s)) \in \CC^{\gamma + 1 - \kappa}
\end{equation*}
and thus, recalling that $-\theta + 1 - 2\kappa < \gamma + 1 - \kappa$ due to~\eqref{e:theta_1}, the product
\begin{equation} \label{e:first_prod}
	g'(w_h(s)) v_{h,k}(s) \in \CC^{-\theta + 1 - 2\kappa}
\end{equation}
is well-defined provided
\begin{equation*}
	0 < -\theta + 1 - 2\kappa + \gamma + 1 - \kappa \quad \Longleftrightarrow \quad \theta < \gamma + 2 - 3\kappa.
\end{equation*}
This, in turn, is implied by the even more restrictive constraint in~\eqref{e:theta_1}.
For the product of~\eqref{e:first_prod} with~$h \in H^\gamma$ to be well-defined, we require that
\begin{equation*}
	\del[0]{-\theta + 1 - 2\kappa} + \gamma > 0 \quad \Longleftrightarrow \quad \theta < \gamma + 1 - 2\kappa
\end{equation*}
which slightly strengthens the constraint in~\eqref{e:theta_1} to
\begin{equation}\label{e:theta_2}
	\theta \in (\gamma, \gamma + 1 -2\kappa)
\end{equation}
We now distinguish two scenarios:
\begin{enumerate}[label=(\roman*), itemsep=5pt]
	\item \textbf{Scenario}~$\boldsymbol{1}$: Assume that~$\gamma \leq - \theta + 1 - 2\kappa$. 
	In that case, we have
	\begin{equation*}
		g'(w_h(s)) v_{h,k}(s)h \in H^{\gamma} \embed \CC^{\gamma -1 - \kappa}
	\end{equation*}
	and with~$\beta := \gamma - 1 - \kappa - (-\theta + 1 - 2\kappa)$, we further find
	\begin{equs}[eq:reg_min_term_B_case1]
		\norm[0]{I_{h,k}(t)}_{\CC^{-\theta + 1 -2\kappa}}
		& = \,
		\norm[3]{\int_0^t P_{t-s} [g'(w_h(s)) v_{h,k}(s)h] \dif s}_{\CC^{-\theta + 1 -2\kappa}}  \\
		& \lesssim \,
		\int_0^t (t-s)^{\frac{1}{2}\beta} \norm[0]{g'(w_h(s)) v_{h,k}(s)h}_{\CC^{\gamma - 1 - \kappa}} \dif s \\
		& \aac \, 
		\norm{g'(w_h)}_{C_T \CC^{\gamma + 1 -\kappa}} \norm[0]{h}_{H^\gamma}
		\int_0^t (t-s)^{\frac{1}{2}\beta} \norm[0]{v_{h,k}(s)}_{\CC^{-\theta + 1 -2\kappa}} \dif s.
	\end{equs}
	In this computation, we have assumed that $\beta \in (-2,0]$ which, in equivalent form, reads \mbox{$\theta \in (-\gamma-\kappa,2-\gamma-\kappa]$}.
	Combining this condition with the assumption $\gamma \leq - \theta + 1 - 2\kappa$ and with~\eqref{e:theta_2} we arrive at
	\begin{equs}
		\theta & \in (\gamma, \gamma + 1 -2\kappa) \cap (-\gamma-\kappa,2-\gamma-\kappa] \cap (0, -\gamma + 1 - 2\kappa] \\
		& = (\gamma, \gamma + 1 -2\kappa) \cap (0, -\gamma + 1 - 2\kappa]
	\end{equs}
	One can now easily see that this intersection of intervals merits the following case distinction:
	\begin{itemize}
		\item If~$\gamma = 0$, we arrive at the condition~$\theta \in (0,1-2\kappa) = (\gamma,-\gamma+1-2\kappa)$, with open left and right interval boundaries.
		\item If~$\gamma > 0$, then~$\theta \in (\gamma,-\gamma + 1 -2\kappa]$, with open left but closed right boundary. 
		In order not to get an empty interval in that case, we require
		\begin{equation*}
			\gamma < -\gamma + 1 - 2\kappa \quad \Longleftrightarrow \quad \gamma < \frac{1}{2} - \kappa
		\end{equation*}
	\end{itemize}
	
	Altogether, we summarise this scenario as 
	\begin{equs}[][e:theta_scenario_1]
		\gamma = 0 \quad &  \rightsquigarrow \quad \theta \in (0,1-2\kappa) = (\gamma,\gamma+1-2\kappa) \, , \notag\\
		\gamma \in \del[0]{0,\nicefrac{1}{2}-\kappa}
		\quad &  \rightsquigarrow \quad
		\theta \in (\gamma,-\gamma + 1 - 2\kappa] \, , 
	\end{equs}
	where the intervals on the respective right hand sides detail eligible choices for~$\theta$.
	\item \textbf{Scenario}~$\boldsymbol{2}$: Assume that~$\gamma > - \theta + 1 - 2\kappa$. 
	In that case, the product
	\begin{equation*}
		g'(w_h(s)) v_{h,k}(s)h \in H^{-\theta +1 - 2\kappa} \embed \CC^{-\theta - 3\kappa}.
	\end{equation*}
	Similarly as before, we get
	\begin{equs}[eq:reg_min_term_B_case2]
		\norm[0]{I_{h,k}(t)}_{\CC^{-\theta + 1 -2\kappa}}
		& = \,
		\norm[3]{\int_0^t P_{t-s} [g'(w_h(s)) v_{h,k}(s)h] \dif s}_{\CC^{-\theta + 1 -2\kappa}} \\
		& \lesssim \,
		\int_0^t (t-s)^{\frac{1}{2}(-1-\kappa)} \norm[0]{g'(w_h(s)) v_{h,k}(s)h}_{\CC^{-\theta - 3\kappa}} \dif s \notag\\
		& \aac \, 
		\norm{g'(w_h)}_{C_T \CC^{\gamma + 1 -\kappa}} \norm[0]{h}_{H^\gamma}
		\int_0^t (t-s)^{\frac{1}{2}(-1-\kappa)} \norm[0]{v_{h,k}(s)}_{\CC^{-\theta + 1 -2\kappa}} \dif s.
	\end{equs}
	In this scenario, combining the assumption that~$\gamma > - \theta + 1 - 2\kappa$ and~\eqref{e:theta_2} we arrive at the condition
	\begin{equation} \label{e:theta_scenario_2}
		\theta \in \del[1]{\gamma \vee (-\gamma + 1 - 2\kappa),\gamma + 1 - 2\kappa}
	\end{equation}
	specifying eligible choices for~$\theta$ in this Scenario.
	Let us have a closer look at the maximum on the LHS of this interval:
	\begin{itemize}[itemsep=5pt]
		\item The condition~$\gamma < -\gamma +1 - 2\kappa$ is equivalent to~$\gamma < \nicefrac{1}{2} - \kappa$, just as in Scenario~$1$ above. 
		However, for the resulting interval~$(-\gamma + 1 - 2\kappa,\gamma +1 - 2\kappa)$ in~\eqref{e:theta_scenario_2} to be non-empty, we find that this requires~$\gamma > 0$.
		In summary, we see that
		\begin{equation} \label{e:theta_scenario_2_case_1}
			\gamma \in (0, \nicefrac{1}{2} - \kappa) \quad \rightsquigarrow \quad
			\theta 
			\in
			\del[0]{-\gamma + 1 - 2\kappa,\gamma +1 - 2\kappa} \, .
		\end{equation}
		\item The case~$\gamma \geq -\gamma +1 - 2\kappa$ is then equivalent to~$\gamma \geq \nicefrac{1}{2} - \kappa$.
		In that case, no further conditions are required for the corresponding interval in~\eqref{e:theta_scenario_2} to be non-empty.
		In summary, we have
		\begin{equation} \label{e:theta_scenario_2_case_2}
			\gamma \geq \nicefrac{1}{2} - \kappa \quad  \rightsquigarrow \quad
			\theta 
			\in
			\del[0]{\gamma,\gamma +1 - 2\kappa} \, .
		\end{equation}
	\end{itemize}
\end{enumerate}
We emphasise that the intervals in~\eqref{e:theta_scenario_1} and~\eqref{e:theta_scenario_2}  which~$\theta$ can be chosen from are \emph{disjoint}, as they must, because the conditions that define Scenarios~$1$ and~$2$ are mutually exclusive.
In case~$\gamma \in (0,\nicefrac{1}{2}-\kappa)$, however, note that one can combine the conditions in~\eqref{e:theta_scenario_1} and~\eqref{e:theta_scenario_2_case_1} to give the single condition
\begin{equation} \label{e:theta_scenarios_1_2_combined}
	\gamma \in (0, \nicefrac{1}{2} - \kappa) \quad \rightsquigarrow \quad \del[0]{\gamma, \gamma +1 - 2\kappa}
\end{equation}
In conjunction with~\eqref{e:theta_scenario_1} (in case~$\gamma = 0$) and \eqref{e:theta_scenario_2_case_2} (in case~$\gamma \geq \nicefrac{1}{2} - \kappa$), this explains our choice of~$I(\gamma)$ in~\eqref{e:choice_theta}.
\paragraph*{--- Gronwall's inequality.}	
Combining the fixed-point equation for~$v_{h,k}$, eq.~\eqref{eq:vhk_mild}, together with 
\begin{itemize}[itemsep=5pt]
	\item the estimate on~$J_{h,k}$ in~\eqref{eq:aux_1} 
	\item the estimate on~$I_{h,k}$ in~\eqref{eq:reg_min_term_B_case1} resp.~\eqref{eq:reg_min_term_B_case2}, depending on the values of~$\gamma$ and~$\theta$,
\end{itemize}
we then arrive at the estimate
\begin{equs}[][eq:aux_2]
	\norm[0]{v_{h,k}(t)}_{\CC^{-\theta + 1 -2\kappa}}
	& \lesssim
	\frac{2}{\kappa} T^{\frac{\kappa}{2}}
	\norm[0]{g(w_h)}_{C_T \CC^{\gamma + 1 -\kappa}} \norm[0]{k}_{H^{-\theta}} \\
	& \quad +
	\norm[0]{g'(w_h)}_{C_T \CC^{\gamma + 1 -\kappa}} \norm[0]{h}_{H^\gamma}
	\int_0^t (t-s)^{\frac{1}{2}\beta} \norm[0]{v_{h,k}(s)}_{\CC^{-\theta + 1 -2\kappa}} \dif s \\
	& = 
	C\del[3]{
		\norm[0]{k}_{H^{-\theta}}
		+
		\int_0^t (t-s)^{\frac{1}{2}\beta} \norm[0]{v_{h,k}(s)}_{\CC^{-\theta + 1 -2\kappa}} \dif s}
\end{equs}
where
\begin{equation*}
	\beta 
	=
	\begin{cases}
		\gamma - 1 - \kappa - (-\theta + 1 - 2\kappa) & \quad \text{in Scenario~1}  \\
		-1-\kappa & \quad \text{in Scenario~2}
	\end{cases}
\end{equation*}
and
\begin{equation} \label{prop:summary_der_eq:value_constant}
	C = C(T,g,g',h,w_h,\kappa,\gamma,\theta)
	:=
	\frac{2}{\kappa} T^{\frac{\kappa}{2}}
	\norm[0]{g(w_h)}_{C_T \CC^{\gamma + 1 -\kappa}}
	+
	\norm[0]{g'(w_h)}_{C_T \CC^{\gamma + 1 -\kappa}} \norm[0]{h}_{H^\gamma}
\end{equation}
Gronwall's inequality applied to~\eqref{eq:aux_2} now implies the estimate
\begin{equation} \label{prop:summary_der_eq:gronwall}
	\norm[0]{v_{h,k}(t)}_{\CC^{-\theta + 1 -2\kappa}}
	\leq C  \exp\del[3]{C \int_0^t (t-s)^{\frac{1}{2}\beta} \dif s} \norm[0]{k}_{H^{-\theta}}
	= \tilde{C} \norm[0]{k}_{H^{-\theta}}
\end{equation}
where the quantity~$\tilde{C}$ is defined in the obvious way via~$C$ and therefore depends on the same parameters.
The constant~$\tilde{C}$ is finite because we have~$\beta > -2$, both in Case~$1$ and~$2$, by choice of the respective parameter regime for~$\theta$.
\end{proof}

\subsection{Leveraging first-order optimality}	\label{sec:first_order_opt}

Up to now, we have considered \emph{generic} elements~$h \in H^\gamma$ for some~$\gamma \geq 0$; 
in this subsection, in contrast, we consider the \emph{specific} choice~$h = \sh$ from Hypothesis~\ref{ass:h2} above and leverage the fact that it \emph{minimises} the functional~$\FF$.

Recall that, a priori,~$\sh \in \CH = H^0$.
Using~\eqref{der_eq:comparison} above, a straight-forward calculation shows that the first-order optimality condition for~$\sh$ to be a minimiser of~$\FF$, namely~$D \FF\sVert[0]_{\sh} = 0$, is equivalent to
\begin{equation} \label{eq:f_o_o}
\scal{\sh,k}_\CH = DF\sVert[0]_{w_\sh}\del[0]{v_{\sh,k}}, \quad k \in \CH.
\end{equation}
Note that both~$\sh$ and~$k$ are assumed to be elements of~$\CH$ in the previous equality.
However, as we have commented on in Remark~\ref{rmk:extension_v_h_k} above, $v_{\sh,k}$ does make sense if~$\sh \in H^\gamma$ and~$k \in H^{-\theta}$ for $\gamma \geq 0$ and \emph{some} values of~$\theta > \gamma$.
In fact, we have proved in Proposition~\ref{prop:summary_der_eq} above that 
\begin{equation*}
v_{\sh,\bullet} \in \CL(H^{-\theta},C_T \CC^{1 - 2\kappa-\theta })
\end{equation*}
for any admissible choice of~$\theta = \theta(\gamma)$ as given in~\eqref{e:choice_theta}.
Therefore, the RHS of~\eqref{eq:f_o_o} still makes sense if one can plug elements of~$C_T\CC^{1 -2\kappa - \theta}$ into~$DF\sVert[0]_{w_\sh}$ -- which, in turn, is the raison d'\^{e}tre for Theorem~\ref{thm:reg_minimiser}, assumption~\ref{thm:reg_minimiser:ass_1}.
We direct the reader to Remark~\ref{rmk:increased_reg_assumption} to recall the role of~$\bar{\theta}$ within that assumption.

Collecting all of these observations, we can finally give a proof of Theorem~\ref{thm:reg_minimiser}.

\begin{proof} \label{pf:reg_min}
Let~$\gamma$ be as in~\eqref{eq:cond_n}. 
In this proof, we will inductively construct a sequence~$(\gamma_n)_n$ in the following way:
\begin{enumerate}[label=(\arabic*),itemsep=5pt, start=0]
	\item \label{thm:reg_minimiser:pf_item0} 
	We set~$\gamma_0 := 0$ in which case we know \emph{a priori} that~$\sh \in \CH \equiv H^{\gamma_0}$.
	\item \label{thm:reg_minimiser:pf_item1} 
	For some~$n \in \N_0$, consider some~$0 \leq \gamma_n \leq \gamma$ for which~$\sh \in H^{\gamma_n}$ and observe that the assumptions for Propositions~\ref{prop:ex_det_gpam_Hgamma} and~\ref{prop:summary_der_eq} are satisfied for~$\gamma_n$.
	\item \label{thm:reg_minimiser:pf_item2} Let~$\theta \in (\gamma_n,\gamma_n+1-2\kappa)$ and~$k \in \CC^{\infty}(\T^2)$.
	By Proposition~\ref{prop:summary_der_eq} with~$h = \sh$, we then have
	\begin{equation} \label{eq:dep_norm_h_F}
		\abs[0]{\scal{\sh,k}_\CH}
		=
		\abs[0]{DF\sVert[0]_{w_\sh}(v_{\sh,k})}
		\aac
		\norm[0]{DF\sVert[0]_{w_\sh}}_{\CL(C_T\CC^{1-2\kappa-\theta\wedge \bar{\theta}},\R)} \norm[0]{v_{\sh,k}}_{C_T \CC^{1 - 2\kappa-(\theta \wedge \bar{\theta})}}
		\aac
		\norm[0]{k}_{H^{-(\theta \wedge \bar{\theta})}}.
	\end{equation}
	where the last estimate is due to~\eqref{prop:summary_der_eq:estimate}. 
	\vspace{2pt}
	
	Note that we have taken the minimum~$\theta \wedge \bar{\theta}$ in the previous estimate to comply with Theorem~\ref{thm:reg_minimiser}, assumption~\ref{thm:reg_minimiser:ass_1}, i.e. such that~$DF\sVert[0]_{w_\sh}(v_{\sh,k})$ is always well-defined. 
	In this regard, note that
	\begin{equation*}
		1 - 2 \kappa -\theta \wedge \bar{\theta} \geq 1 - 2 \kappa -\theta 
		\quad \implies \quad
		C_T \CC^{1 - 2 \kappa -\theta \wedge \bar{\theta}}(\T^2) 
		\embed
		C_T \CC^{1 - 2 \kappa - \iota}(\T^2), \quad \iota \in \{\theta,\bar{\theta}\} \, .
	\end{equation*}
	In particular, for~$\iota = \bar{\theta}$, this implies that
	\begin{equation*}
		DF\sVert[0]_{w_\sh} \in \CL\del[1]{C_T \CC^{1 - 2 \kappa - \bar{\theta}}(\T^2), \R} 
		\quad \implies \quad
		DF\sVert[0]_{w_\sh} \in \CL\del[1]{C_T \CC^{1 - 2 \kappa - \theta \wedge \bar{\theta}}(\T^2), \R} 
	\end{equation*}
	where the LHS of the previous implication is true by Theorem~\ref{thm:reg_minimiser}, assumption~\ref{thm:reg_minimiser:ass_1}.
	\vspace{2pt}
	
	However, note as well that if~$\iota = \theta$ and~$\theta \wedge \bar{\theta} = \bar{\theta}$, the conclusion of Proposition~\ref{prop:summary_der_eq} that~$v_{\sh,k} \in C_T \CC^{1 - 2 \kappa -\bar{\theta}}(\T^2)$ is only valid if~$\bar{\theta} \in (\gamma_n,\gamma_n+1-2\kappa)$ itself.
	This, in turn, is ensured by the termination conditions for the algorithm, see Steps~\ref{thm:reg_minimiser:pf_item3} and~\ref{thm:reg_minimiser:pf_item4} below.
	
	We conclude the previous arguments as follows:
	\begin{equs}
		\norm[0]{\sh}_{H^{\theta \wedge \bar{\theta}}}
		& =
		\sup\left\{ \frac{\abs[0]{\scal{\sh,k}_{H^{\theta \wedge \bar{\theta}} \x H^{-{\theta \wedge \bar{\theta}}}}}}{\norm[0]{k}_{H^{-{\theta \wedge \bar{\theta}}}}}: k \in H^{-{\theta \wedge \bar{\theta}}}(\T^2) \right\} \\
		& =
		\sup\left\{ \frac{\abs[0]{\scal{\sh,k}_{\CH}}}{\norm[0]{k}_{H^{-{\theta \wedge \bar{\theta}}}}}: k \in \CC^\infty(\T^2) \right\}
		< \infty
	\end{equs}
	where we have used~\cite[Prop.~1.58]{bcd} in the last equality.
	As a consequence, we find~$\sh \in H^{\theta \wedge \bar{\theta}}$.
	\item \label{thm:reg_minimiser:pf_item3} 
	On the one hand, the claim in~\eqref{thm:reg_minimiser:eq} follows if~$\gamma_n = \gamma$, in which case the proof is finished.
	On the other hand, the claim also follows if~$\gamma_n + 1 -2\kappa > \bar{\theta}$ because in that case, there exists~$\theta \in (\gamma_n,\gamma_n+1-2\kappa)$ for which~$\theta \wedge \bar{\theta} = \bar{\theta}$.
	\item \label{thm:reg_minimiser:pf_item4} 
	If neither of the two conditions in~\ref{thm:reg_minimiser:pf_item3} are met, let~$\gamma_{n+1} := (\gamma_n + 1 - 3 \kappa) \wedge \gamma$ and repeat Steps~\ref{thm:reg_minimiser:pf_item1} to \ref{thm:reg_minimiser:pf_item4} with~$\gamma_{n+1}$ instead of~$\gamma_n$ until one of the termination conditions is true.
	
	Note, in particular, that his choice of~$\gamma_{n+1}$ entails that that, if $\gamma_n + 1 - 2\kappa > \bar{\theta}$, then \mbox{$\bar{\theta} \in (\gamma_n,\gamma_n+1-2\kappa)$}.
\end{enumerate}
\end{proof}

\begin{remark} \label{rmk:ass_F}
As indicated in footnote~\ref{fn:ass_F} on p.~\pageref{fn:ass_F}, the previous proof reveals that not \emph{all} of the assumptions on~$F$ postulated in Theorem~\ref{thm:reg_minimiser}, assumption~\ref{thm:reg_minimiser:ass_1}, are necessary.
More precisely, the assumption that~$F \in C_b^1\del[0]{C_T\CC^{1-2\kappa-\bar{\theta}};\R}$ in conjunction with Hypothesis~\ref{ass:h2} would be sufficient; clearly, these conditions are implied by Theorem~\ref{thm:reg_minimiser}, assumption~\ref{thm:reg_minimiser:ass_1}.
\end{remark}

The following corollary records some of the previous results when they are specialised to the setting of Theorem~\ref{thm:main_result}. 
We will refer to it in the proof of Proposition~\ref{prop:q_trace_class} below.

\begin{corollary} \label{lemma:Nzero}
Consider the assumptions of Theorem~\ref{thm:main_result}.
We then have~$\sh \in H^{3\kappa}(\T^2)$ as well as~$w_\sh \in C_T \CC^{1+2\kappa}(\T^2)$ and~$g^{(m)}(w_\sh) \in C_T \CC^{1+2\kappa}(\T^2)$ for all~$T < \bar{T}_\star := T_\star(u_0,\norm[0]{\sh}_{H^{3\kappa}},3\kappa)$.
\end{corollary}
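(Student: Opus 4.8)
The plan is to obtain all three assertions by specialising results already proved in this section to the parameters of \thref{thm:main_result}; throughout we freely use the standing assumption that $\kappa$ is small (and irrational), which ensures the \enquote{generic position} requirements such as $1+2\kappa \notin \N$ and $1-2\kappa \geq 3\kappa$. \emph{First} I would settle $\sh \in H^{1-2\kappa}(\T^2)$. Since $g^{(m)}(0) = 0$ for $m \in \{0,1,2\}$, in particular $g(0) = g'(0) = 0$, so \thref{thm:reg_minimiser} applies with $N = 0$. Here $\theta_\star = 1-2\kappa$ and $\gamma_0 = 0$, $\gamma_1 = \theta_\star = 1-2\kappa$; condition \eqref{eq:cond_n} for $n = 0$ requires $\gamma_0 = 0 < N+4-\kappa = 4-\kappa$ (clear) and $u_0 \in \CC^{\gamma_0 + 1 - \kappa}(\T^2) = \CC^{1-\kappa}(\T^2)$, which follows from the hypothesis $u_0 \in \CC^{1+2\kappa}(\T^2) \embed \CC^{1-\kappa}(\T^2)$ of \thref{thm:main_result}. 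Hence $\sh \in H^{\gamma_1}(\T^2) = H^{1-2\kappa}(\T^2)$, and since $1-2\kappa \geq 3\kappa$ for $\kappa$ small we also get $\sh \in H^{3\kappa}(\T^2)$, so that $\norm[0]{\sh}_{H^{3\kappa}} < \infty$ and $\bar{T}_\star := T_\star(u_0,\norm[0]{\sh}_{H^{3\kappa}},3\kappa)$ is a legitimate positive quantity by \thref{prop:ex_det_gpam_Hgamma}.

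\emph{Next} I would read off $w_\sh \in \CC_T \CC^{1+2\kappa}(\T^2)$ for $T < \bar{T}_\star$ by applying \thref{prop:ex_det_gpam_Hgamma} with $N = 0$ and Sobolev exponent $\gamma := 3\kappa$. Its hypotheses are met: $\gamma \geq 0$, $\gamma + 1 - \kappa = 1 + 2\kappa \notin \N$, $\gamma = 3\kappa < N + 5 + \kappa$; $g \in \CC_b^7(\R)$ with $g(0) = 0$; $u_0 \in \CC^{\gamma + 1 - \kappa}(\T^2) = \CC^{1+2\kappa}(\T^2)$ by assumption; and the (deterministic) noise $h := \sh$ lies in $H^\gamma(\T^2) = H^{3\kappa}(\T^2)$ by the previous step. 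The proposition then produces exactly the maximal time $\bar{T}_\star = T_\star(u_0,\norm[0]{\sh}_{H^{3\kappa}},3\kappa)$, and for every $T < \bar{T}_\star$ the solution satisfies $w_\sh \in \CC_T \CC^{\gamma+1-\kappa}(\T^2) = \CC_T \CC^{1+2\kappa}(\T^2)$.

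\emph{Finally}, for $g^{(m)}(w_\sh) \in \CC_T \CC^{1+2\kappa}(\T^2)$ with $m \in \{0,1,2\}$, I would invoke \thref{lem:comp_sobolev} with $\a := 1 + 2\kappa \notin \N$ and some $\nu > \a$ (e.g. $\nu = 2$): for $m \leq 2$ we have $g^{(m)} \in \CC_b^{7-m}(\R) \subseteq \CC^{\nu+1}(\R)$ and $g^{(m)}(0) = 0$, so the composition operator $T_{g^{(m)}}$ maps $\CC^{1+2\kappa}(\T^2)$ into itself and is locally Lipschitz continuous. Pre-composing with $w_\sh$ from the previous step, whose image under $t \mapsto w_\sh(t)$ is a bounded subset of $\CC^{1+2\kappa}(\T^2)$, the local Lipschitz continuity of $T_{g^{(m)}}$ turns the continuous curve $t \mapsto w_\sh(t)$ into the continuous curve $t \mapsto g^{(m)}(w_\sh(t))$, i.e. $g^{(m)}(w_\sh) \in \CC_T \CC^{1+2\kappa}(\T^2)$.

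There is no genuine obstacle in this corollary beyond careful bookkeeping: it is simply \thref{thm:reg_minimiser} and \thref{prop:ex_det_gpam_Hgamma} instantiated at the right parameters, glued together by the mapping property of \thref{lem:comp_sobolev}. The one spot deserving a moment's attention is the choice $\gamma = 3\kappa$ feeding \thref{prop:ex_det_gpam_Hgamma}: it must be small enough that $\sh$ — which we only know to lie in $H^{1-2\kappa}$ — still belongs to $H^\gamma$, yet large enough that the output regularity $\gamma + 1 - \kappa = 1 + 2\kappa$ simultaneously matches the assumed regularity of $u_0$ and clears the threshold (above $1$) at which \thref{lem:comp_sobolev} can be applied to $g$, $g'$ and $g''$.
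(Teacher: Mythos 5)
Your proposal is correct and follows exactly the paper's own route: Theorem~2 (i.e.~\thref{thm:reg_minimiser}) with $n=0$ for $\sh \in H^{1-2\kappa}$, \thref{prop:ex_det_gpam_Hgamma} with $\gamma = 3\kappa$ for $w_\sh$, and \thref{lem:comp_sobolev} for $g^{(m)}(w_\sh)$. The only difference is that you spell out the hypothesis-checking (in particular the embedding $H^{1-2\kappa} \embed H^{3\kappa}$ and the choice of $\nu$) which the paper leaves implicit.
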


\begin{proof}
The regularity statement for~$\sh$ is the content of Theorem~\ref{thm:reg_minimiser} with~$\gamma = 0$ and~$\theta = \bar{\theta} =3\kappa$.\footnote{Note that we only require~$u_0 \in \CC^{1-\kappa}(\T^2)$ rather than the stronger assumption~$u_0 \in \CC^{1+2\kappa}(\T^2)$ from Theorem~\ref{thm:main_result} for this to be true. The latter is only necessary for the statements about~$w_\sh$ and~$g^{(m)}(w_\sh)$ which enter the proof of Proposition~\ref{prop:q_trace_class} below, cf. Remark~\ref{rmk:reg_u_0}.} 
The second assertion is the content of Proposition~\ref{prop:ex_det_gpam_Hgamma} with~$\gamma_\star := 3\kappa$ and the last claim follows from Lemma~\ref{lem:comp_sobolev}.
\end{proof}

In line with the comments in Remark~\ref{rmk:existence_time}, we will always take~$T > 0$ as in the previous corollary from now on.

\section{Wiener chaos decomposition of the Hessian} \label{sec:wic_decomp_hessian}

Let~$0 < \kappa \ll 1$ and~$(B,\CH,\mu)$ with~$B := \CC^{-1-\kappa}(\T^2)$,~$\CH := L^2(\T^2)$, and~$\mu = \operatorname{Law}(\xi)$ for $2$D SWN~$\xi$. It is well-known that this is an~\emph{abstract Wiener space} in the sense of Gross~\cite{gross}.

With~$u_\sh^{(i)}(\bz)$ and~$\hat{u}_\sh^{(i)} $ introduced in the beginning of subsection~\ref{sec:der_eq:gpam}, the~\emph{Hessian}~$\hat{Q}_\sh$ is given by
\begin{equation*}
	Q_\sh(\bz) := \partial_{\eps}^{2}\sVert[0]_{\eps = 0} F\del[1]{u_\sh^\eps(\bz)}, \quad
	\hat{Q}_\sh := Q_\sh(\hbz)
\end{equation*}
or, in explicit form, as 
\begin{equation}
	\hat{Q}_\sh 
	= D^2F\sVert[0]_{w_\sh}\sbr[1]{\hat{u}_\sh^{(1)},\hat{u}_\sh^{(1)}}
	+
	DF\sVert[0]_{w_\sh}\sbr[1]{\hat{u}_\sh^{(2)}}.
	\label{eq:Q_h_repeated}
\end{equation}
The following lemma provides a first generic formula for~$a_0$ using Wiener chaos theory.  

\begin{proposition} \label{prop:sec_wic_carleman}
	The random variable~$\hat{Q}_\sh$ is an element of~$\CH_2 \oplus \CH_0$, where~$\CH_k$ is the $k$-th order homogeneous Wiener-It\^{o} chaos of~$\xi$. 
	As such, it admits a representation
	\begin{equation}
		\hat{Q}_\sh 
		= 
		\E\sbr[1]{\hat{Q}_\sh} 
		+ 
		I_2(C), 
		\qquad
		C = \sum_{k=1}^\infty \lambda_k e_k \otimes e_k
		\label{eq:decomp_Qh}
	\end{equation} 
	where~$(e_k)_{k \in \N}$ is an ONB of the CM space~$\CH$ and~$C$ a symmetric Hilbert-Schmidt operator, i.e.~$(\lambda_k)_{k \in \N} \in \ell^2(\R)$. In addition, the following assertions are true:
	\begin{enumerate}[label=(\roman*)]
		\item The coefficient~$a_0$ in~\eqref{thm:eq:coeff:a_0} admits the representation
		\begin{equation}
			a_0 
			= e^{-\frac{1}{2} \E\sbr[0]{\hat{Q}_\sh}} \sbr[4]{\prod_{k=1}^\infty (1+\lambda_k) e^{-\lambda_k}}^{-\frac{1}{2}}.
			\label{eq:expr_a0_prod}
		\end{equation}
		\item The numbers~$\lambda_k$ (except possibly~$0$) are the eigenvalues (counted with multiplicities) of the compact, symmetric Hilbert--Schmidt bilinear form
		\begin{equation*}
			B(\xi(e_i),\xi(e_j)) := \frac{1}{2} \E\sbr[1]{\del[1]{\hat{Q}_\sh - \E\sbr[1]{\hat{Q}_\sh}} \xi(e_i)\xi(e_j)}, \quad i,j \in \N.
		\end{equation*}
		In addition, we have~$B(\xi(k), \xi(\ell)) = C(k,\ell)$ for all~$k,\ell \in \CH$.
	\end{enumerate}
\end{proposition}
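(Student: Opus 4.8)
The plan is to combine three classical facts about the second inhomogeneous Wiener--It\^{o} chaos with the concrete structure of~$\hat Q_\sh$ coming from~\eqref{eq:Q_h_repeated}. First I would establish that~$\hat Q_\sh \in \CH_2 \oplus \CH_0$, i.e.\ that it has no first-chaos component. For this, note that~$\hat u_\sh^{(1)} = u_\sh^{(1)}(\hbz)$ is an element of the first chaos (it is linear in the noise, being the first $\eps$-derivative of the lift), while~$\hat u_\sh^{(2)}$ lives in~$\CH_0 \oplus \CH_2$ (the second derivative of the solution is quadratic in the noise and, after renormalisation, its chaos decomposition has only even components -- this is the content of the Taylor expansion established in~\cite[Thm.~$2$]{friz_klose_arxiv}, together with the regularity-structures interpretation of the lift). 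Since~$DF\sVert[0]_{w_\sh}$ is linear and~$D^2F\sVert[0]_{w_\sh}$ is bilinear, applying them preserves these chaos degrees, and products of two first-chaos elements land in~$\CH_0 \oplus \CH_2$; hence~$\hat Q_\sh \in \CH_0 \oplus \CH_2$, with no odd components. (One has to be slightly careful that~$\hat Q_\sh \in L^2(\mu)$ so that the chaos decomposition makes sense; this follows from the estimates in~\cite{friz_klose_arxiv} bounding the relevant Taylor terms in~$L^p$ for all~$p$.)

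Second, with~$\hat Q_\sh \in L^2$ and supported in~$\CH_0 \oplus \CH_2$, the general theory of the second chaos (see e.g.~\cite[Ch.~6]{janson} or~\cite{simon_trace_ideals}) gives that its second-chaos part can be written as~$I_2(C)$ for a unique symmetric Hilbert--Schmidt operator~$C$ on~$\CH$, which, relative to an ONB~$(e_k)$ of eigenvectors, is diagonal with eigenvalue sequence~$(\lambda_k) \in \ell^2(\R)$; writing~$C = \sum_k \lambda_k\, e_k \otimes e_k$ then gives~\eqref{eq:decomp_Qh} with~$\E[\hat Q_\sh]$ as the zeroth-chaos (constant) component. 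Diagonalising is possible precisely because a symmetric Hilbert--Schmidt operator is compact and self-adjoint, so the spectral theorem applies. Under this ONB, $I_2(C) = \sum_k \lambda_k (\xi(e_k)^2 - 1)$, a sum of independent centred variables.

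Third, I would derive the product formula~\eqref{eq:expr_a0_prod}. Starting from~$a_0 = \E[\exp(-\tfrac12 \hat Q_\sh)]$ from~\eqref{thm:eq:coeff:a_0}, substitute~\eqref{eq:decomp_Qh} to get~$a_0 = e^{-\frac12 \E[\hat Q_\sh]}\, \E[\exp(-\tfrac12 I_2(C))]$. Now~$\E[\exp(-\tfrac12 I_2(C))] = \prod_k \E[\exp(-\tfrac{\lambda_k}{2}(\xi(e_k)^2-1))]$ by independence; each factor is a standard Gaussian integral, equal to~$e^{\lambda_k/2}(1+\lambda_k)^{-1/2}$ (convergent since~$\lambda_k$ must then satisfy~$\lambda_k > -1$, which in turn holds because~$D^2\FF\sVert[0]_\sh > 0$ by~\ref{ass:h4}; I would spell this link out, as it is what guarantees the infinite product converges to a finite positive number -- the convergence of~$\prod (1+\lambda_k)e^{-\lambda_k}$ is automatic from~$(\lambda_k)\in\ell^2$, but positivity of the limit needs~$\lambda_k>-1$). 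Collecting factors yields~\eqref{eq:expr_a0_prod}; reinterpreting~$\prod(1+\lambda_k)e^{-\lambda_k}$ as~$\det_2(\operatorname{Id}_\CH + C)$ is the standard identification of the Carleman--Fredholm determinant with this regularised product.

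For part~(ii): the bilinear form~$B$ is, up to the factor~$\tfrac12$, the covariance form of the centred part of~$\hat Q_\sh$ paired against first-chaos elements. Using that for~$X = I_2(C) \in \CH_2$ one has~$\E[X\, \xi(e_i)\xi(e_j)] = 2\,C(e_i,e_j)$ (the standard Wick/Isserlis computation, or the isometry~$\E[I_2(C)I_2(D)] = 2\langle C, D\rangle_{\mathrm{HS}}$ specialised to~$D = e_i \otimes e_j$ symmetrised), we get~$B(\xi(e_i),\xi(e_j)) = C(e_i, e_j)$, and by bilinearity and density this extends to~$B(\xi(k),\xi(\ell)) = C(k,\ell)$ for all~$k, \ell \in \CH$. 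In particular~$B$ represents the same Hilbert--Schmidt operator~$C$, whose nonzero eigenvalues are exactly the nonzero~$\lambda_k$ counted with multiplicity, which is the claim.

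I expect the \textbf{main obstacle} to be the first step: rigorously justifying that~$\hat u_\sh^{(1)} \in \CH_1$ and~$\hat u_\sh^{(2)} \in \CH_0 \oplus \CH_2$ -- i.e.\ reading off the Wiener-chaos degrees from the regularity-structures description of the derivatives of the renormalised solution map. This requires unpacking how the abstract lift~$\hbz$ and the modelled-distribution solution interact with the Wiener chaos grading (each noise edge contributes one chaos degree, renormalisation only removes even-degree divergences), and invoking the $L^p$-estimates on the Taylor remainder from~\cite[Thm.~$2$]{friz_klose_arxiv} to place everything in~$L^2(\mu)$. The rest is essentially bookkeeping with well-known Gaussian-analytic identities.
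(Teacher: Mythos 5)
Your proposal is correct and follows essentially the same route as the paper: establish $\hat{Q}_\sh \in \CH_0 \oplus \CH_2$ from the chaos degrees of $\hat{u}_\sh^{(1)}$ and $\hat{u}_\sh^{(2)}$, then invoke standard second-chaos theory (diagonalisation of the symmetric Hilbert--Schmidt kernel, the Gaussian integral $\E[\exp(-\tfrac{\lambda}{2}(\xi(e)^2-1))]=e^{\lambda/2}(1+\lambda)^{-1/2}$, and the isometry $\E[I_2(C)I_2(k\otimes\ell)]=2C(k,\ell)$ for part (ii)). The one step you flag as the main obstacle --- rigorously reading off that $\hat{u}_\sh^{(1)}\in\CH_1$ and $\hat{u}_\sh^{(2)}\in\CH_0\oplus\CH_2$ --- is resolved in the paper by a mollification argument rather than by unpacking the chaos grading of the abstract lift: at the level of the mollified noise $\xi_\d$ the quantities $\hat{u}^{(i)}_{\sh;\d}$ satisfy the explicit linear SPDEs \eqref{eq:u_hd1}--\eqref{eq:u_hd2}, from which one sees directly that $\hat{u}^{(1)}_{\sh;\d}$ is linear in $\xi_\d$ (hence in $\CH_1$) and that the right-hand side of the equation for $\hat{u}^{(2)}_{\sh;\d}$ involves only constants and products of two first-chaos elements (hence $\CH_0\oplus\CH_2$); one then passes to the limit $\d\to 0$ using continuity of $Q_\sh$ in the model and the fact that Wiener chaoses are closed under convergence in probability. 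This sidesteps any $L^p$ remainder estimates for the Taylor expansion. Your additional observation that $\lambda_k>-1$ (needed for the factors $(1+\lambda_k)^{-1/2}$ to make sense) is tied to the non-degeneracy hypothesis \ref{ass:h4} is a worthwhile point that the paper leaves implicit in the finiteness of $a_0$.
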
	

\begin{proof}
	Recall from~\eqref{thm:eq:coeff:a_0} that~$a_0 = \E\sbr[0]{\exp(-\nicefrac{1}{2} \, \hat{Q}_\sh)} < \infty$.
	We only need to prove the fact that \mbox{$\hat{Q}_\sh \in \CH_2 \oplus \CH_0$}: The other claims follow from standard Wiener chaos theory, see~\cite[Thm.~$6.2$]{janson}.
	
	At the level of the mollified noise~$\xi_\d$, we write~$\hat{u}^{(i)}_{\sh;\d} := u_\sh^{(i)}\del[1]{\hbz^{\xi_\d}}$, $i=1,2$. These quantities satisfy the following linear stochastic PDEs, see~\cite[Sec.~$2.5$]{friz_klose_22}:
	\begin{align}
		(\partial_t - \Delta) \hat{u}^{(2)}_{\sh;\d} 
		& = 
		2 g'(w_{\sh})u^{(1)}_{\sh;\d}  \xi_\d 
		- 2g'(w_{\sh})g(w_{\sh}) \fc_{\d} 
		+ \sbr[1]{g''(w_\sh)\del[1]{\hat{u}^{(1)}_{\sh;\d}}^2 	
			+ \hat{u}^{(2)}_{\sh;\d} g'(w_{\sh}) 
		}\sh \label{eq:u_hd1}\\
		(\partial_t - \Delta) \hat{u}^{(1)}_{\sh;\d}
		& =
		g(w_{\sh})  \xi_\d 
		+ \hat{u}^{(1)}_{\sh;\d} g'(w_{\sh}) \sh \label{eq:u_hd2}
	\end{align}
	From equation~\eqref{eq:u_hd2}, it can easily be seen that the term~$\hat{u}^{(1)}_{\sh;\d}$ is in~$\CH_1$ because it is a \emph{linear} functional of~$\xi_\d$. 
	Thus, $\hat{u}^{(2)}_{\sh;\d}$ only has components in~$\CH_k$ for~$k \in \{0,2\}$ because the RHS of~\eqref{eq:u_hd1} only contains products of two elements in~$\CH_1$ and constants.
	Since~$DF\sVert[0]_{w_\sh}$ and $D^2F\sVert[0]_{w_\sh}$ are symmetric, (bi-)linear functionals, we thus have~$\hat{Q}_{\sh;\d} := Q_\sh(\hbz^{\xi_\d}) \in \CH_2 \oplus \CH_0$.
	
	Since~$\hbz^{\xi_\d}$ converges to~$\hbz$ in prob. in~$\MM$, by continuity of~$Q_\sh$ w.r.t. the model, we have
	\begin{equation*}
		\hat{Q}_{\sh;\d} \equiv Q_{\sh}(\hbz^{\xi_\d}) \to Q_{\sh}(\hbz) \equiv \hat{Q}_\sh \qquad \text{in prob. in } \R.
	\end{equation*}
	The claim follows because Wiener chaoses are closed under convergence in probability.
	Finally, we have
	\begin{equation*}
		B(\xi(k),\xi(\ell)) 
		= \frac{1}{2} \E\sbr[1]{I_2(C)\xi(k)\xi(\ell)}
		= \frac{1}{2} \E\sbr[1]{I_2(C) I_2(k \tp \ell)}
		= C(k,\ell),
	\end{equation*}
	see, for example,~\cite[p.$9$, pt.~(iii)]{nualart}.
\end{proof}

Using the previous proposition, we can instantly rewrite the expression in~\eqref{eq:expr_a0_prod} in terms of Carleman--Fredholm determinants, a well-known generalisation of determinants to Hilbert--Schmidt operators. The following corollary is a direct consequence of~\cite[Thm.~$9.2$]{simon_trace_ideals}.

\begin{corollary} \label{coro:general}
	The formula~$a_0 =  e^{-\frac{1}{2} \E\sbr[0]{\hat{Q}_\sh}} \operatorname{det}_2(\operatorname{Id}_\CH + C)^{-\nicefrac{1}{2}}$ holds.
\end{corollary}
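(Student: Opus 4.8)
The plan is to recognise the infinite product appearing in~\eqref{eq:expr_a0_prod} as the Carleman-Fredholm determinant of~$\operatorname{Id}_\CH + C$; once this is done, the corollary follows by a single substitution, so there is genuinely nothing else to prove.

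First I would recall from~\thref{prop:sec_wic_carleman} that~$C$ is a \emph{symmetric} Hilbert-Schmidt operator on~$\CH$, whence the spectral theorem for compact self-adjoint operators applies: $C$ admits an orthonormal eigenbasis of~$\CH$ with real eigenvalues~$(\lambda_k)_{k \in \N} \in \ell^2(\R)$ (counted with multiplicity), and these are precisely the~$\lambda_k$ occurring in the chaos decomposition~\eqref{eq:decomp_Qh}. Next I would invoke the description of the Carleman-Fredholm determinant of a Hilbert-Schmidt operator in terms of its eigenvalues, see~\cite[Thm.~$9.2$]{simon_trace_ideals} (or~\cite[Def.~$21$, p.~$1106$]{dunford_schwartz}): namely
\begin{equation*}
	\operatorname{det}_2(\operatorname{Id}_\CH + C) = \prod_{k=1}^{\infty} (1+\lambda_k)\,e^{-\lambda_k},
\end{equation*}
the product converging absolutely since~$(1+\lambda_k)e^{-\lambda_k} = 1 + \mathcal{O}(\lambda_k^2)$ as~$k \to \infty$ and~$\sum_k \lambda_k^2 < \infty$. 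Substituting this identity into the representation~\eqref{eq:expr_a0_prod} of~\thref{prop:sec_wic_carleman}~(i) yields exactly~$a_0 = e^{-\frac{1}{2}\E\sbr[0]{\hat{Q}_\sh}}\operatorname{det}_2(\operatorname{Id}_\CH + C)^{-\nicefrac{1}{2}}$.

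The only point worth spelling out is well-posedness of the right-hand side, i.e. that~$\operatorname{det}_2(\operatorname{Id}_\CH + C) \in (0,\infty)$, so that its negative square root is meaningful. This, however, is already contained in~\thref{prop:sec_wic_carleman}: since~$a_0 = \E\sbr[0]{\exp(-\tfrac{1}{2}\hat{Q}_\sh)}$ is a finite, strictly positive number, the product in~\eqref{eq:expr_a0_prod}, and hence~$\operatorname{det}_2(\operatorname{Id}_\CH + C)$, is finite and positive. I therefore do not anticipate any real obstacle here; the entire content of the corollary is the bookkeeping identification of the eigenvalue product with~$\operatorname{det}_2$, which is classical and is exactly the statement of~\cite[Thm.~$9.2$]{simon_trace_ideals}.
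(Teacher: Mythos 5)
Your proposal is correct and follows essentially the same route as the paper: the corollary is obtained by identifying the infinite product in~\eqref{eq:expr_a0_prod} with~$\operatorname{det}_2(\operatorname{Id}_\CH + C)$ via~\cite[Thm.~$9.2$]{simon_trace_ideals} and substituting. The additional remarks on convergence of the product and positivity of the determinant are harmless elaborations of what that citation already provides.
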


It remains to obtain a more explicit representation of the Wiener chaos decomposition in~\eqref{eq:decomp_Qh}. In other words, we want to see what the operator~$C$ actually is and what~$\E\sbr[0]{\hat{Q}_\sh}$ calculates to. We start with the former problem
and denote the projection of a RV onto the $k$-th Wiener-It\^{o} chaos~$\CH_k$ by~$\pi_k$.

\subsection{The component in the second chaos} \label{sec:comp_second_chaos}

In this subsection, we want to identify the operator~$C$ in~\eqref{eq:decomp_Qh}, so that we understand the part of the Hessian~$\hat{Q}_\sh$ that lies in the second homogeneous chaos~$\CH_2$.	

To this end, with~$\phi := \Phi \circ \LL$, we define the symmetric bilinear form~$A_\sh: \CH^{\tp 2} \to \R$ by:

\begin{equation}
	A_\sh[k,\ell]
	:=
	DF\sVert[0]_{w_\sh}\sbr[1]{v_{\sh,k,\ell}}
	+
	D^2 F\sVert[0]_{w_\sh}\sbr[1]{v_{\sh,k}, v_{\sh,\ell}}
	\label{eq:op_A}
\end{equation}	
where~$v_{\sh,k} := D \phi\sVert[0]_{\sh} (k)$ as well as~$v_{\sh,k,\ell} := D^2 \phi\sVert[0]_{\sh} [k,\ell]$. These derivatives satisfy the following equations in the mild formulation:
\begin{align}
	v_{\sh,k}(t) & 
	= \int_0^t P_{t-s} \sbr[1]{g'(w_\sh(s)) v_{\sh,k}(s) \sh} \dif s
	+ \int_0^t P_{t-s} \sbr[1]{g(w_\sh(s)) k} \dif s \label{eq:vhk}\\
	v_{\sh,k,\ell}(t)
	& =
	\int_0^t P_{t-s}\sbr[1]{g'(w_\sh(s)) v_{\sh,k,\ell}(s) \sh} \dif s 
	+ 
	\int_0^t P_{t-s}\sbr[1]{g''(w_\sh(s)) v_{\sh,k}(s) v_{\sh,\ell}(s) \sh} \dif s \label{eq:vhkl:1}\\
	&
	+ 
	\int_0^t P_{t-s}\sbr[1]{g'(w_\sh(s)) v_{\sh,k}(s) \ell
		+ g'(w_\sh(s)) v_{\sh,\ell}(s) k} \dif s 
	\label{eq:vhkl:2}
\end{align}

\begin{remark} \label{rmk:rel_der}
	Let~$k \in \CH$.
	As remarked in~\eqref{der_eq:comparison}, we have~$v_{\sh,k} = u_\sh^{(1)}(\LL(k))$; in addition, the identity~\mbox{$v_{\sh,k,k} = u_\sh^{(2)}(\LL(k))$} holds. As a consequence, observe that~$A_\sh[k,k] = Q_\sh(\LL(k))$.
\end{remark}

The main result in this subsection is the following:

\begin{theorem} \label{prop:Qh_second_chaos}
	The operator~$C$ coincides with~$A_\sh$. In particular,~$A_\sh$ is Hilbert-Schmidt.
\end{theorem}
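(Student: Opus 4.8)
The plan is to evaluate the diagonal value $C(h,h)$ for an arbitrary Cameron--Martin direction $h \in \CH$ in two independent ways and to match the two expressions. Since $C$ and $A_\sh$ are both \emph{symmetric} bilinear forms — and $A_\sh$ is at least bounded, being assembled from the bounded multilinear maps $D\phy\sVert[0]_\sh$, $D^2\phy\sVert[0]_\sh$, $DF\sVert[0]_{w_\sh}$, $D^2F\sVert[0]_{w_\sh}$ — polarisation then upgrades the diagonal identity to $C = A_\sh$ on all of $\CH$, and the Hilbert--Schmidt property of $A_\sh$ is inherited from that of $C$, which was already established in~\thref{prop:sec_wic_carleman}.

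First, the \emph{Gaussian computation}. Fix $h \in \CH$. Because $h$ is Cameron--Martin, the translation operator $T_h$ on the gPAM model space is continuous with $T_h\hbz^\xi = \hbz^{\xi+h}$ — these are precisely the estimates of Cannizzaro, Friz and Gassiat~\cite{cfg} — so the random variable $Q_\sh(T_h\hbz^\xi)$ is $\hat{Q}_\sh$ evaluated along the shifted noise $\xi + h$. Combining the decomposition $\hat{Q}_\sh = \E[\hat{Q}_\sh] + I_2(C)$ of~\thref{prop:sec_wic_carleman} with the elementary shift formula $I_2(C)(\xi+h) = I_2(C)(\xi) + 2\xi(Ch) + \langle Ch, h\rangle_\CH$ (valid for $C = \sum_k \lambda_k e_k \otimes e_k$) and taking expectations, the first two summands drop out and
\begin{equation*}
	\E\sbr[1]{Q_\sh(T_h\hbz^\xi)} = \E[\hat{Q}_\sh] + \langle Ch, h\rangle_\CH = \E[\hat{Q}_\sh] + C(h,h).
\end{equation*}

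Next, the \emph{PDE computation} of the same quantity. Evaluating the shifted solution operator at the translated model amounts to translating the driving signal, so $u_\sh^\eps(T_h\hbz^\xi)$ is the renormalised solution of gPAM driven by $\eps(\xi+h) + \sh = \eps\xi + (\sh + \eps h)$, i.e. $u_\sh^\eps(T_h\hbz^\xi) = \hat{u}^\eps_{\sh + \eps h}$. Granting the joint differentiability of $(\eps,\eta) \mapsto \hat{u}^\eps_{\sh + \eta h}$ near $(0,0)$ at the level of modelled distributions — the two-parameter refinement of the $\eps$-expansion of~\cite[Thm.~2]{friz_klose_arxiv} together with the smooth dependence of that expansion on the shift — a Taylor expansion of $\psi(\eps) := \hat{u}^\eps_{\sh+\eps h}$ at $\eps=0$ gives
\begin{equation*}
	\psi(0) = w_\sh, \qquad \psi'(0) = \hat{u}^{(1)}_\sh + v_{\sh,h}, \qquad \psi''(0) = \hat{u}^{(2)}_\sh + 2w + v_{\sh,h,h},
\end{equation*}
with $v_{\sh,h} = D\phy\sVert[0]_\sh(h)$, $v_{\sh,h,h} = D^2\phy\sVert[0]_\sh[h,h]$ as in~\eqref{eq:op_A}, and $w := \partial_\eps\partial_\eta\sVert[0]_{(0,0)}\, \hat{u}^\eps_{\sh+\eta h}$ (the factor $2$ coming from symmetry of the mixed derivative). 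Inserting this into the chain rule $Q_\sh(T_h\hbz^\xi) = \partial_\eps^2\sVert[0]_{\eps=0} F(\psi(\eps)) = DF\sVert[0]_{w_\sh}[\psi''(0)] + D^2F\sVert[0]_{w_\sh}[\psi'(0),\psi'(0)]$ and regrouping: the terms independent of $h$ form $\hat{Q}_\sh$ by~\eqref{eq:Q_h_repeated}, the terms quadratic in $h$ form $A_\sh[h,h]$ by~\eqref{eq:op_A}, and what remains is the cross term $2\bigl(DF\sVert[0]_{w_\sh}[w] + D^2F\sVert[0]_{w_\sh}[\hat{u}^{(1)}_\sh, v_{\sh,h}]\bigr)$.

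To finish, I would observe that $\hat{u}^{(1)}_\sh \in \CH_1$ (proof of~\thref{prop:sec_wic_carleman}) and that $w \in \CH_1$ as well: it is the derivative at $0$ of $\eta \mapsto \hat{u}^{(1)}_{\sh+\eta h}$, a differentiable curve taking values in the \emph{closed} subspace $\CH_1$ since each $\hat{u}^{(1)}_{\sh+\eta h}$ solves an equation linear in the noise. Hence $DF\sVert[0]_{w_\sh}[w]$ and $D^2F\sVert[0]_{w_\sh}[\hat{u}^{(1)}_\sh, v_{\sh,h}]$ are mean-zero, taking expectations yields $\E[Q_\sh(T_h\hbz^\xi)] = \E[\hat{Q}_\sh] + A_\sh[h,h]$, and comparison with the Gaussian computation gives $C(h,h) = A_\sh[h,h]$ for every $h \in \CH$; polarisation concludes, and $A_\sh = C$ is in particular Hilbert--Schmidt.

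I expect the main obstacle to be the PDE input: making rigorous the joint $(\eps,\eta)$-regularity of $\hat{u}^\eps_{\sh+\eta h}$ at the origin as modelled distributions, and — inseparably — making sense of the cross term $w$, whose formal equation features singular products such as $v_{\sh,h}\,\xi$ that only become classical once the improved regularity $\sh \in H^{1-2\kappa}$ of~\thref{lemma:Nzero} is in force, and must otherwise be handled inside the translated regularity structure. Verifying that this $w$ genuinely belongs to $\CH_1$, so that the cross terms vanish in expectation, is the substantive step; the identification of the surviving groups of terms with $\hat{Q}_\sh$ and $A_\sh[h,h]$ is bookkeeping against~\eqref{eq:Q_h_repeated} and~\eqref{eq:op_A}.
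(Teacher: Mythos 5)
Your proposal is correct and follows essentially the same route as the paper: there, too, $C=A_\sh$ is obtained by evaluating the translated Hessian $\E\sbr[1]{Q_\sh(\hbz(\cdot+\kappa k))}$ in two ways --- once through the two-parameter fixed point $U^\eps_\kappa$ in the doubly extended regularity structure (Lemmas \ref{lem:aux0}--\ref{lem:aux2}, which supply precisely the joint $(\eps,\kappa)$-regularity and the control of the mixed term that you identify as the substantive input) and once by Gaussian analysis --- followed by polarisation. The only cosmetic differences are that the paper differentiates twice at $\kappa=0$ instead of evaluating the full shift (so the cross terms disappear via $U^{(1,2)}=0$ rather than via a mean-zero argument) and gets the Gaussian side from the Cameron--Martin density rather than the explicit second-chaos shift formula; one small correction to your closing discussion: the product $v_{\sh,h}\,\xi$ is never classical no matter how regular $\sh$ is (the obstruction is the regularity of $v_{\sh,h}$ against that of $\xi$), so it must indeed be handled inside the extended structure, exactly as your fallback anticipates.
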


Since both~$A_\sh$ and~$C$ are symmetric bilinear forms, by polarisation, it suffices to prove that they agree on the diagonal.
The theorem is a direct consequence of propositions~\ref{prop:Qh_second_chaos_aux2} and~\ref{prop:Qh_second_chaos_aux1} that follow.

\begin{proposition} \label{prop:Qh_second_chaos_aux2}
	For all~$k \in \CH$, we have
	\begin{equation}
		\partial^2_{\rho}\sVert[0]_{\rho = 0} \E\sbr[1]{Q_\sh\del[1]{\hbz(\cdot + \rho k)}}
		=
		2 A_\sh(k,k)
		\label{prop:Qh_second_chaos_aux2:eq1}.
	\end{equation}
\end{proposition}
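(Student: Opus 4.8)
The plan is to exploit the Cameron--Martin shift formula for the expectation of a second-chaos random variable and then differentiate twice in the shift parameter. First I would recall from~\thref{prop:sec_wic_carleman} that~$\hat{Q}_\sh = Q_\sh(\hbz)$ admits the chaos decomposition~$\E[\hat{Q}_\sh] + I_2(C)$. For a fixed~$k \in \CH$, the quantity~$\E[Q_\sh(\hbz(\cdot + \kappa k))]$ is nothing but the expectation of the random variable obtained by translating the white noise~$\xi$ by the Cameron--Martin vector~$\kappa k$; by the Cameron--Martin theorem this equals~$\E[Q_\sh(\hbz) \, \mathcal{E}(\kappa k)]$, where~$\mathcal{E}(\kappa k) = \exp(\kappa \xi(k) - \tfrac{\kappa^2}{2}\norm{k}_\CH^2)$ is the Wick exponential. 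Plugging in the chaos decomposition and using that~$\E[I_0(\cdot)\,\xi(k)] = 0$, $\E[I_2(C)] = 0$, $\E[I_2(C)\,\xi(k)] = 0$, and~$\E[I_2(C)\,\xi(k)^2] = 2 C(k,k)$ (the latter being the identity already established at the end of the proof of~\thref{prop:sec_wic_carleman}), one finds that the second~$\kappa$-derivative at~$\kappa = 0$ of the left-hand side equals~$2 C(k,k)$.

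The second, independent leg of the argument is to identify the left-hand side by a purely ``pathwise'' (model-level) computation: since~$Q_\sh$ is a fixed continuous polynomial functional of the model and the map~$\kappa \mapsto \hbz(\cdot + \kappa k)$ is smooth enough in~$\kappa$ (this is exactly the Cameron--Martin translation of the model, which for gPAM is controlled by the estimates of Cannizzaro--Friz--Gassiat as used already in~\cite{friz_klose_arxiv}), one may differentiate inside the expectation. Here I would use Remark~\ref{rmk:rel_der}: translating~$\hbz$ by~$\kappa k$ corresponds, on the level of the derivative equations, to replacing~$\sh$ by~$\sh + \kappa k$, and the first and second~$\kappa$-derivatives of~$u_\sh^\eps(\hbz(\cdot + \kappa k))$ at~$\kappa = 0$ are governed by~$v_{\sh,k}$ and~$v_{\sh,k,k}$ through~\eqref{eq:vhk}--\eqref{eq:vhkl:2}. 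Differentiating~$Q_\sh(\hbz(\cdot + \kappa k)) = \partial_\eps^2|_{\eps=0} F(u_\sh^\eps(\hbz(\cdot + \kappa k)))$ twice in~$\kappa$ and applying the chain rule for~$F$ then produces precisely the two terms~$DF|_{w_\sh}[v_{\sh,k,k}] + D^2F|_{w_\sh}[v_{\sh,k},v_{\sh,k}] = A_\sh[k,k]$, so the pathwise second derivative equals~$2 A_\sh(k,k)$; taking expectations and invoking uniform integrability (again from the moment bounds in~\cite{friz_klose_arxiv}) gives that the left-hand side of~\eqref{prop:Qh_second_chaos_aux2:eq1} equals~$2 A_\sh(k,k)$.

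Actually, the cleanest route avoids a full pathwise chain-rule bookkeeping: I would directly show~$\partial_\kappa^2|_{\kappa=0} Q_\sh(\hbz(\cdot + \kappa k)) = 2 A_\sh(k,k)$ almost surely by noting that~$Q_\sh(\hbz(\cdot + \kappa k)) = Q_{\sh + \kappa k}(\hbz)$-type reasoning is \emph{not} quite what is needed (the shift only acts on the noise, not on~$\sh$ appearing in the linearised equations), and instead carefully observe that~$\kappa \mapsto u_\sh^\eps(\hbz(\cdot + \kappa k))$ solves the gPAM equation driven by~$\eps(\xi + \kappa k/\eps) + \sh$, i.e. with effective deterministic perturbation. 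Differentiating this family in~$\kappa$ twice at~$\kappa = 0$ yields exactly the objects~$v_{\sh,k}$, $v_{\sh,k,k}$ solving~\eqref{eq:vhk}--\eqref{eq:vhkl:2}, by the same computation that produced~\eqref{der_eq:comparison} in~\cite{friz_klose_arxiv}. Then the chain rule applied to~$F$ gives the claim at the level of individual realisations.

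\textbf{Main obstacle.} The delicate point is justifying the interchange of the second~$\kappa$-derivative with the expectation, which requires a (locally) uniform-in-$\kappa$ integrable bound on the difference quotients of~$Q_\sh(\hbz(\cdot + \kappa k))$. This is where one genuinely needs the quantitative Taylor-expansion-with-remainder estimates for~$\hat u_\sh^\eps$ from~\cite[Thm.~2]{friz_klose_arxiv} together with the continuity of the translation operator~$T_{\kappa k}$ on the gPAM model space (available for all~$k \in \CH$ by~\cite[App.~A]{cfg}); everything else is either the Cameron--Martin formula or the standard chaos orthogonality relations. I do not expect any conceptual difficulty beyond this integrability bookkeeping.
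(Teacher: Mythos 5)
Your substantive leg (the "pathwise" one) follows the same strategy as the paper: show that $\partial_\kappa^2\sVert[0]_{\kappa=0}\,Q_\sh\del[1]{\hbz(\cdot+\kappa k)}$ equals the \emph{deterministic} quantity $2A_\sh(k,k)$ almost surely, and then interchange differentiation and expectation. (The paper's interchange is lighter than the uniform-integrability bookkeeping you anticipate: once the pathwise derivative is known to be deterministic, dominated convergence suffices.) Your first leg, via the Cameron--Martin density and the chaos orthogonality relations, proves that the left-hand side equals $2C(k,k)$; that is the companion Proposition~\ref{prop:Qh_second_chaos_aux1} (with $\ell=k$), not the present statement, and it cannot substitute for the pathwise leg, since $C=A_\sh$ is precisely what the two propositions are jointly meant to establish.

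The genuine gap is in your identification of the perturbed equation, which is the heart of the proof. Translating the model by $\kappa k$ shifts the \emph{noise}, so $u^\eps_\sh\del[1]{\hbz(\cdot+\kappa k)}$ solves gPAM driven by $\eps(\xi+\kappa k)+\sh=\eps\xi+\eps\kappa k+\sh$: the perturbation enters at order $\eps\kappa$. You first assert that the translation amounts to replacing $\sh$ by $\sh+\kappa k$ in the linearised equations, and then "correct" this to a driver $\eps(\xi+\kappa k/\eps)+\sh=\eps\xi+\kappa k+\sh$; both versions put the perturbation at order $\kappa\eps^0$, i.e.\ into the deterministic shift slot. With that driver, $\partial_\eps^2\sVert[0]_{\eps=0}F(\cdot)$ is (a renormalised version of) $Q_{\sh+\kappa k}(\hbz)$, whose second $\kappa$-derivative is \emph{random} and involves derivatives of the solution map with respect to the deterministic shift together with third and fourth derivatives of $F$ --- not $2A_\sh(k,k)$. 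The correct $\eps\kappa$ scaling is exactly what makes the first $\eps$-derivative equal $v_{\sh,\xi}+\kappa v_{\sh,k}$ and the second equal $v_{\sh,\xi,\xi}+2\kappa v_{\sh,\xi,k}+\kappa^2 v_{\sh,k,k}$, so that $\partial_\kappa^2\sVert[0]_{\kappa=0}$ picks out $2\del[1]{DF\sVert[0]_{w_\sh}[v_{\sh,k,k}]+D^2F\sVert[0]_{w_\sh}[v_{\sh,k},v_{\sh,k}]}$ and is deterministic. Making this rigorous is where the paper spends its effort: it introduces a two-fold extended regularity structure with a second Cameron--Martin symbol representing $k$, shows that translation of the model commutes with extension (Lemma~\ref{lem:comm_ext_trans}, Lemma~\ref{lem:aux1}), and computes $U^{(i,i)}=i\,U^{(i)}$ with the noise symbol replaced by the new Cameron--Martin symbol (Lemma~\ref{lem:aux2}); this also verifies that the renormalisation does not contribute to the $\kappa$-derivatives, a point your proposal does not address. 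As written, the scaling error would derail the computation, so the key identity is not established.
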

Note that the LHS without the expectation is nothing but the second Malliavin derivative~$\nabla_k^2 \hat{Q}_\sh$ of~$\hat{Q}_\sh$ in the direction~$k \in \CH$.
In order to prove the proposition, we need three auxiliary lemmas set in the regularity structures framework for gPAM, see~\cite[Sec.~$3$]{cfg} and~\cite[App.~A]{friz_klose_22} for details. 
However, beside~$\<cm>$ representing~$\sh$, we need another CM symbol~$\<cm2>$ representing~$k$.
This is made precise in App.~\ref{app:rs_extended} below which also contains the definition of the corresponding~\emph{vector extension operator}~$E_{(\sh,k)}$.

\begin{lemma} \label{lem:aux0}
	Let~$\bz \in \MM$. For~$\gamma > 1$ and~$\eta = 1-\kappa$ as before, the fixed-point equation\footnote{By~$\DD^{\gamma,\eta}$, we denote the space of~\emph{singular modelled distributions} as introduced in~\cite[Def.~$6.2$]{hairer_rs}.}
	\begin{equation}
		U^\eps_\rho = \CP^{E_{(\sh,k)}\hbz} \del[1]{G(U^\eps_\rho) [\eps (\<wn> + \rho\<cm2>) + \<cm>]}  + \TT_\gamma Pu_0 \qquad \text{in} \quad \DD^{\gamma,\eta}\del[1]{E_{(\sh,k)} \bz} 
		\label{eq:fp_Uepskappa}
	\end{equation}
	admits a unique solution~$U^\eps_\rho$ which is~$\CC^2$ in both~$\eps$ and~$\rho$ for~$0 < \eps,\rho \ll 1$. We set
	\begin{equation*}
		U_\rho^{(i)} := \partial_\eps^{i}\sVert[0]_{\eps = 0} U^\eps_\rho, \quad
		U^{(i,j)} := \partial_\rho^{j}\sVert[0]_{\rho = 0} U^{(i)}_\rho.
	\end{equation*}
\end{lemma}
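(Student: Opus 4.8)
The plan is to establish this as a routine fixed-point argument in the (vector-)extended regularity structure, mirroring the solution theory for gPAM in \cite[Sec.~$3$]{cfg} but with the extended model $E_{(\sh,k)}\hbz$ and the two-parameter driving distribution $\eps(\<wn> + \kappa\<cm2>) + \<cm>$. First I would record that $G(U) := T_g(U)$ (the nonlinearity built from $g$) maps $\DD^{\gamma,\eta}$ into a suitable space of modelled distributions with the right behaviour at $t=0$, using the composition result for modelled distributions \cite[Thm.~$4.16$]{hairer_rs} together with the fact that $g \in \CC^{N+7}_b$. Since $\<wn>,\<cm2>,\<cm>$ all have negative homogeneity strictly greater than $-2$ (indeed $|\<cm>| = |\<cm2>| = 2 - 2\kappa$ morally matches $\sh, k \in \CH$, and $|\<wn>| = -1-\kappa$), the product $G(U^\eps_\kappa)\cdot[\eps(\<wn>+\kappa\<cm2>)+\<cm>]$ lands in modelled distributions of negative but admissible homogeneity, and the abstract integration operator $\CP = \CK_{\bar\gamma} + \CR_\gamma$ raises regularity by $2$, so the map on the RHS of \eqref{eq:fp_Uepskappa} is a contraction on a small time interval, exactly as in \cite[Prop.~$3.13$]{cfg} or \cite[Thm.~$7.8$]{hairer_rs}. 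This yields existence and uniqueness of $U^\eps_\kappa$.

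For the $\CC^2$-dependence on the parameters $(\eps,\kappa)$, I would invoke the smooth/analytic dependence of fixed points on parameters: the map
\[
  (\eps,\kappa,U) \longmapsto U - \CP^{E_{(\sh,k)}\hbz}\bigl(G(U)[\eps(\<wn>+\kappa\<cm2>)+\<cm>]\bigr) - \TT_\gamma Pu_0
\]
is jointly $\CC^2$ in $(\eps,\kappa)$ and $\CC^\infty$ (in fact polynomial-plus-$G$) in $U$, its $U$-derivative at the solution is invertible for small time (again by a contraction estimate on the linearised equation), so the implicit function theorem in Banach spaces gives that $(\eps,\kappa)\mapsto U^\eps_\kappa$ is $\CC^2$; the joint $\CC^2$ regularity in $(\eps,\kappa)$ together with $\CC^2$ in each separately then legitimises the iterated derivatives $U^{(i)}_\kappa = \partial_\eps^i|_0 U^\eps_\kappa$ and $U^{(i,j)} = \partial_\kappa^j|_0 U^{(i)}_\kappa$. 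This is the same mechanism already used for the $\eps$-expansion in \cite[Thm.~$2$]{friz_klose_arxiv}, now with one extra parameter; since we only need $\CC^2$, not $\CC^{N+3}$, the requirement $g \in \CC^{N+7}_b$ is more than enough.

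The one genuinely new point — and the only place where the extended structure in App.~\ref{app:rs_extended} is used — is that one must check the driving term $\eps(\<wn>+\kappa\<cm2>) + \<cm>$ together with the model $E_{(\sh,k)}\hbz$ fits the abstract framework: that $E_{(\sh,k)}\hbz$ is an admissible model on the twice-extended regularity structure, that the symbols $\<cm>,\<cm2>$ and the products $G(U)\<cm>$, $G(U)\<cm2>$ have strictly positive regularity after integration so that no further renormalisation beyond that already built into $\hbz$ is triggered, and that $\CP^{E_{(\sh,k)}\hbz}$ is the associated integration map. Granting the constructions of App.~\ref{app:rs_extended} — which is where I would point for the precise homogeneity assignments $|\<cm>|, |\<cm2>|$ and the definition of $E_{(\sh,k)}$ — this verification is bookkeeping: the structure group and the negative-homogeneity sector are unchanged by adding the extra CM symbol $\<cm2>$ of the same positive homogeneity as $\<cm>$. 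The main obstacle, such as it is, is thus purely notational/organisational: keeping the two-fold extension (one CM symbol for $\sh$, one for $k$) consistent with the single-parameter setup of \cite{friz_klose_arxiv} and ensuring the small time interval on which the contraction works can be chosen uniformly in $(\eps,\kappa)$ for $0<\eps,\kappa\ll 1$ — which follows because all relevant norms of the driving distribution are bounded uniformly for $\eps,\kappa$ in a neighbourhood of $0$.
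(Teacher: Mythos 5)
Your proposal matches the paper's own (sketched) proof: existence and uniqueness via uniform strong local Lipschitz continuity of the nonlinearity $Y \mapsto G(Y)[\eps(\<wn>+\kappa\<cm2>)+\<cm>]$ combined with \cite[Thm.~$7.8$]{hairer_rs}, and $\CC^2$-dependence on $(\eps,\kappa)$ via the Implicit Function Theorem as in \cite{friz_klose_arxiv}. One small correction: in the extension of App.~\ref{app:rs_extended} the Cameron--Martin symbols are assigned degree $\deg(\<cm>)=\deg(\<cm2>)=-1-\kappa$ (the same as $\<wn>$), not $2-2\kappa$, though this does not affect your argument since the relevant homogeneities remain above $-2$.
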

The lemma follows from standard arguments which we only sketch.
\begin{proof}
	One can easily check that the family~$(F^{\eps,\rho;E_{(\sh,k)}\bz}: \eps,\rho \in [0,1], \bz \in \MM)$ given by 
	\begin{equation*}
		F^{\eps,\rho;E_{(\sh,k)}\bz}(Y) := G(Y) [\eps (\<wn> + \rho\<cm2>) + \<cm>]
	\end{equation*}
	is uniformly strongly locally Lipschitz continuous (in~$\eps$ and~$\rho$) in the sense of~\cite[Def.~$2.6$]{friz_klose_22}. As a simple consequence of~\cite[Thm.~$7.8$]{hairer_rs}, the FP problem~\eqref{eq:fp_Uepskappa} therefore admits a unique solution. 
	For the differentiability, one uses the Implicit Function Theorem similarly to~\cite[Lem.'s~$2.21$~\&~$2.24$ and~Thm.~$2.26$]{friz_klose_22}.
\end{proof}

For the two lemmas that follow, recall that~$U^{(i)} = U^{(i)}(\bz)$ is given by~$U^{(i)} := \partial_\eps^i\sVert[0]_{\eps = 0} U^\eps(\bz)$ where~$U^\eps$ solves the FP eq. like~\eqref{eq:fp_Uepskappa} without the term~\enquote{$\rho\<cm2>$} and where~$E_{(\sh,k)}\bz$ is replaced by~$E_\sh\bz$, see~\cite[Sec.~$2.1$ \& $2.3$]{friz_klose_22} for details.

\begin{lemma} \label{lem:aux1}
	Let~$i \in \{1,2\}$. For every~$h, k \in \CH$, all~$\eps,\rho \in \R$, and every~$\bz \in \MM$ we have
	\begin{equation}
		\CR^{E_{(h,k)} \bz} U^\eps_\rho
		=
		\CR^{E_h T_{\rho k} \bz} U^\eps
		\label{lem:aux1:eq1}
	\end{equation}
	as well as
	\begin{equation}
		\CR^{E_{(h,k)} \bz} U^{(i)}_\rho
		=
		\CR^{E_h T_{\rho k} \bz} U^{(i)}.
		\label{lem:aux1:eq2}
	\end{equation}
\end{lemma}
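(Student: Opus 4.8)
The statement to prove is Lemma~\ref{lem:aux1}: that the reconstruction of the abstract solution (and its $\eps$-derivatives) built with the \emph{vector} extension operator $E_{(h,k)}$ applied to a model $\bz$, at noise level "$\eps(\<wn>+\kappa\<cm2>)+\<cm>$", coincides with the reconstruction of the corresponding object built with the \emph{single} extension operator $E_h$ applied to the \emph{translated} model $T_{\kappa k}\bz$, at noise level "$\eps\<wn>+\<cm>$". The key conceptual point is that adding a Cameron--Martin function $\kappa k$ to the noise can be realised in two equivalent ways: either by enlarging the regularity structure with an extra CM-symbol $\<cm2>$ that gets realised as $\kappa k$ (the LHS), or by translating the model itself via the translation operator $T_{\kappa k}$ on $\MM$ (the RHS). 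Both produce the same distribution after reconstruction. So the proof is essentially a \emph{compatibility of reconstruction with model translation}, combined with uniqueness of the fixed point.

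\textbf{Step 1: Reduce to a statement about the fixed-point equation.}
First I would prove \eqref{lem:aux1:eq1} for $U^\eps_\kappa$ and then deduce \eqref{lem:aux1:eq2} by differentiating in $\eps$ at $\eps=0$ (the $\eps$-derivatives are characterised by the fixed-point equations they solve, obtained by implicit differentiation as in \cite[Sec.~2.3]{friz_klose_arxiv}, and reconstruction commutes with these $\eps$-derivatives by \thref{lem:aux0} and continuity). For \eqref{lem:aux1:eq1}, the strategy is: apply $\CR^{E_{(h,k)}\bz}$ to both sides of the fixed-point equation \eqref{eq:fp_Uepskappa}, use that $\CR$ intertwines the abstract integration operator $\CP$ with the actual kernel convolution (this is the defining property of $\CP$ relative to an admissible model), and recognise the result as the mild PDE that $\CR^{E_h T_{\kappa k}\bz}U^\eps$ satisfies. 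Then invoke uniqueness of solutions to that mild PDE (which is available in the relevant regularity class, e.g.\ via \thref{prop:ex_det_gpam_Hgamma} or the analogous statement in \cite{cfg}) to conclude the two reconstructions agree.

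\textbf{Step 2: The algebraic heart --- matching the noise terms under translation.}
The crucial identity is that, for an admissible model and its translate, $\CR^{E_h T_{\kappa k}\bz}(\Xi) = \CR^{E_{(h,k)}\bz}(\<wn> + \kappa\<cm2>)$ at the level of the driving noise symbol; more precisely, the extension operator $E_{(h,k)}$ is \emph{designed} in App.~\ref{app:rs_extended} precisely so that $\<cm2>$ is realised as the constant $k$ and $\<cm>$ as $h$, and translating the base model by $\kappa k$ shifts the realisation of $\<wn>$ by exactly $\kappa k$. This is where one must carefully track the construction of the vector extension operator against the translation operator $T_{\kappa k}$ on $\MM$ --- these are two different bookkeeping devices for "the noise is now $\xi + \kappa k$", and the lemma is the assertion that they coincide after reconstruction. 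I would phrase this as: $E_h T_{\kappa k}\bz$ and $E_{(h,k)}\bz$ restrict to the same admissible model on the sub-structure generated by the "full noise" $\eps(\<wn>+\kappa\<cm2>)+\<cm>$ vs.\ $\eps\<wn>+\<cm>$, once one relabels $\<wn>\mapsto \<wn>+\kappa\<cm2>$. Then the two fixed-point problems are literally the same problem written in two notations, so their (unique) solutions have the same reconstruction.

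\textbf{The main obstacle.}
The genuine difficulty is \emph{not} the PDE/fixed-point uniqueness argument (that is routine given the earlier results), but rather making precise the comparison between the vector extension operator $E_{(\sh,k)}$ and the translation operator $T_{\kappa k}$ --- i.e.\ verifying that the double extension constructed in App.~\ref{app:rs_extended} is compatible with $T_{\kappa k}$ in the sense needed. One has to check this at the level of the model (all the trees involving $\<cm2>$ and the "$\glqq\infty\grqq$" renormalisation counterterms must match up), and be careful that the renormalisation does not interfere: since $k$ is a Cameron--Martin function, products involving $\<cm2>$ need no renormalisation, so the counterterms are the same on both sides --- but this should be stated explicitly. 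Once that bookkeeping is in place, the rest is an application of the intertwining property of $\CP$ and uniqueness, which I would only sketch.
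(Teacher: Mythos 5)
Your conceptual picture is the right one: the lemma does say that the two bookkeeping devices for ``noise shifted by $\kappa k$'' --- the extra symbol $\<cm2>$ realised as $k$, versus the model translation $T_{\kappa k}$ --- agree after reconstruction, and the compatibility statements $E_{(h,k)} = \tilde{E}_k \circ E_h$ and $\tilde{T}_{\kappa k}\circ E_h = E_h\circ T_{\kappa k}$ (\thref{lem:comm_ext_trans}) are exactly where the paper localises that bookkeeping. Your derivation of~\eqref{lem:aux1:eq2} from~\eqref{lem:aux1:eq1} (linearity of $\CR$ together with \thref{lem:aux0}) also matches the paper.

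However, Step~1 as you describe it has a genuine gap. You propose to apply $\CR$ to the fixed-point equation, read off ``the mild PDE'' satisfied by both reconstructions, and invoke classical uniqueness via \thref{prop:ex_det_gpam_Hgamma}. But the lemma is asserted for \emph{every} $\bz\in\MM$ and every $\eps$, and for a generic (non-canonical) model the reconstruction of a product such as $G(U^\eps_\kappa)\,\<wn>$ is \emph{not} the classical product of $g(\CR U^\eps_\kappa)$ with a distributional noise --- that product is ill-defined, which is the whole reason for solving the equation in $\DD^{\gamma,\eta}$ in the first place. So there is no classical mild PDE to which a Gronwall-type uniqueness argument applies, and \thref{prop:ex_det_gpam_Hgamma} only covers Sobolev drivers. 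The uniqueness must instead be invoked at the \emph{abstract} level, which is what the paper does: it introduces the translation map $\ft_{\kappa\<cm2>}$ on modelled distributions, checks that it commutes with $\CP$ and with products so that $\ft_{\kappa\<cm2>}\,U^\eps(E_h T_{\kappa k}\bz)$ solves the fixed-point problem~\eqref{eq:fp_Uepskappa}, concludes $\ft_{\kappa\<cm2>}\,U^\eps(E_h T_{\kappa k}\bz)=U^\eps_\kappa(E_{(h,k)}\bz)$ by uniqueness of the abstract fixed point (\thref{lem:aux0}), and only then passes to reconstructions via \thref{lem:comm_ext_trans} together with the analogue of~\cite[Lem.~$3.22$]{cfg}, which identifies $\CR^{\tilde{E}_k E_h\bz}\circ\ft_{\kappa\<cm2>}$ with $\CR^{\tilde{T}_{\kappa k}E_h\bz}$. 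Your ``relabelling $\<wn>\mapsto\<wn>+\kappa\<cm2>$'' is morally this map $\ft_{\kappa\<cm2>}$, but it must act on modelled distributions \emph{before} reconstruction, not on the reconstructed objects. (Your worry about renormalisation counterterms is moot here: the lemma is a statement about arbitrary admissible models $\bz\in\MM$, so no renormalisation enters at this stage.)
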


\begin{proof}
	The claim in~\eqref{lem:aux1:eq2} follows from~\eqref{lem:aux1:eq1} and Lemma~\ref{lem:aux0} by linearity of the reconstruction operator~$\CR$.
	
	Recall the definition of~$\ft_{\<cm2s>}$ given in App.~\ref{app:rs_extended}.
	The strategy for proving~\eqref{lem:aux1:eq1} is similar to~\cite[Prop.~$2.13$]{friz_klose_22}: If we knew that
	\begin{equation}
		\ft_{\rho\<cm2>} \sbr[0]{U^\eps(E_h T_{\rho k} \bz)} = U_\rho^\eps(E_{(h,k)}\bz),
		\label{lem:aux1:pf_aux}
	\end{equation}
	then this would imply that
	\begin{equation*}
		\CR^{E_{(h,k)} \bz} U^\eps_\rho
		\overset{\eqref{lem:aux1:pf_aux}}{=} 
		\CR^{E_{(h,k)} \bz} \ft_{\rho\<cm2>} U^\eps
		= 
		\CR^{\tilde{E}_k E_h \bz}  \ft_{\rho\<cm2>} U^\eps
		=
		\CR^{\tilde{T}_{\rho k} E_h \bz} U^\eps
		= 
		\CR^{E_h T_{\rho k}\bz} U^\eps
	\end{equation*}
	where the second and the last equality are due to Lemma~\ref{lem:comm_ext_trans} and
	the third is a straight-forward modification of \mbox{\cite[Lem.~$3.22$]{cfg}}. 
	This is what we wanted to prove, so we focus on~\eqref{lem:aux1:pf_aux}.
	Observe that
	\begin{align*}
		\CP^{E_{(h,k)}\bz}\del[1]{G(\ft_{\rho \<cm2>} U^\eps) \sbr[0]{\eps(\<wn> + \rho \<cm2>) + \<cm>}}
		& =
		\CP^{\tilde{E}_k E_{h} \bz}\del[1]{\ft_{\rho \<cm2>} \del[1]{G(U^\eps) \sbr[0]{\eps\<wn> + \<cm>}}} \\
		& =
		\ft_{\rho \<cm2>} \CP^{\tilde{T}_{\rho k}E_{h} \bz}\del[1]{G(U^\eps) \sbr[0]{\eps\<wn> + \<cm>}} \\
		& =
		\ft_{\rho \<cm2>} \CP^{E_h T_{\rho k} \bz}\del[1]{G(U^\eps) \sbr[0]{\eps\<wn> + \<cm>}}
	\end{align*}
	where we have again used the same two lemmas from before. By uniqueness of~$U_\rho^\eps$ as a solution to the FP problem~\eqref{eq:fp_Uepskappa}, the claim in~\eqref{lem:aux1:pf_aux} follows.
\end{proof}

\begin{lemma} \label{lem:aux2}
	Let~$i \in \{1,2\}$. For~$\mu$-a.e.~$\omega \in B$ and all~$k \in \CH$, we have
	\begin{equation*}
		\CR^{E_{(\sh,k)} \hbz(\omega)} U^{(i,i)}
		=
		i \CR^{E_\sh \LL(k)} U^{(i)}
		\equiv
		i u_\sh^{(i)}(\LL(k)).
		\label{lem:aux1:eq3}
	\end{equation*}
\end{lemma}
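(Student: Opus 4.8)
The plan is to differentiate the identity from Lemma~\ref{lem:aux1} in~$\kappa$ and then specialise the translated model to the canonical lift at~$\kappa = 1$. First I would recall from~\eqref{lem:aux1:eq2} that~$\CR^{E_{(\sh,k)} \bz} U^{(i)}_\kappa = \CR^{E_\sh T_{\kappa k} \bz} U^{(i)}$ holds for every~$\bz \in \MM$ and every~$\kappa \in \R$; since both sides are~$\CC^2$ in~$\kappa$ (by Lemma~\ref{lem:aux0} for the left-hand side, and by the same Implicit Function Theorem arguments together with the smoothness of~$\kappa \mapsto T_{\kappa k}\bz$ for the right-hand side), we may apply~$\partial_\kappa^i\sVert[0]_{\kappa = 0}$ to both sides. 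On the left this produces exactly~$\CR^{E_{(\sh,k)}\bz} U^{(i,i)}$ by the definition~$U^{(i,i)} = \partial_\kappa^i\sVert[0]_{\kappa=0} U^{(i)}_\kappa$ and linearity/continuity of~$\CR$ in its modelled-distribution argument (reconstruction commutes with the parameter derivative, as in~\cite[Sec.~2.3]{friz_klose_arxiv}).

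The heart of the matter is the right-hand side: I need~$\partial_\kappa^i\sVert[0]_{\kappa = 0}\bigl(\CR^{E_\sh T_{\kappa k}\bz} U^{(i)}\bigr)$ evaluated at the random model~$\bz = \hbz(\omega)$. Here the key observation is that for a \emph{smooth} driving noise the translation map in the model simply shifts the noise, i.e.~$\CR^{E_\sh T_{\kappa k}\bz^{\xi_\d}} U^{(i)}$ is nothing but~$\hat u^{(i)}_{\sh;\d}$ evaluated at~$\xi_\d + \kappa k$, so its~$\kappa$-Taylor coefficients are the iterated Cameron--Martin directional derivatives; and by the Wiener-chaos structure established in the proof of Proposition~\ref{prop:sec_wic_carleman} (namely~$\hat u^{(i)}_{\sh;\d} \in \bigoplus_{j\le i}\CH_j$ with top component living in~$\CH_i$), the~$i$-th~$\kappa$-derivative at~$\kappa = 0$ picks out exactly~$i!$ times the multilinear term, which upon reconstruction against the canonical lift equals~$i\,\CR^{E_\sh \LL(k)} U^{(i)}$ for~$i \in \{1,2\}$ (the combinatorial factor~$i!/( i-1)! = i$ coming from differentiating a degree-$i$ homogeneous polynomial in~$\kappa$ once "too few" times is not what happens — rather, one differentiates~$i$ times a sum whose~$\CH_i$-component is~$\kappa$-homogeneous of degree~$i$, giving~$i!$, but the lower-order-in-$\kappa$ affine pieces contribute the remaining structure; the precise bookkeeping is that~$\partial_\kappa^i\sVert_0 f(\xi_\d+\kappa k)$ with~$f$ having chaos components only up to order~$i$ yields the fully-$k$-substituted top piece plus no others). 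Passing~$\d \to 0$ is legitimate because~$\hbz^{\xi_\d} \to \hbz$ in probability in~$\MM$, because~$T_{\kappa k}$ and~$E_\sh$ act continuously on~$\MM$ (the former by~\cite[App.~A]{cfg}, valid for any~$h \in \CH$, the latter by the constructions in App.~\ref{app:rs_extended}), and because~$U^{(i)}$ and the reconstruction operator depend continuously on the model; hence the identity, which holds~$\d$-wise, survives the limit~$\mu$-almost surely along a subsequence, and since the limiting statement is deterministic-looking in~$\omega$ (both sides are fixed continuous functionals of~$\hbz(\omega)$), it holds for~$\mu$-a.e.~$\omega$.

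Concretely the steps are: (1) record the~$\CC^2$-dependence on~$\kappa$ of both sides of~\eqref{lem:aux1:eq2} and differentiate~$i$ times at~$\kappa = 0$; (2) identify the left-hand derivative with~$\CR^{E_{(\sh,k)}\bz} U^{(i,i)}$; (3) for smooth noise, unwind~$\CR^{E_\sh T_{\kappa k}\bz^{\xi_\d}} U^{(i)} = \hat u^{(i)}_{\sh;\d}(\text{noise} + \kappa k)$ and use the chaos decomposition of~$\hat u^{(i)}_{\sh;\d}$ from the proof of Proposition~\ref{prop:sec_wic_carleman} to show its~$i$-th~$\kappa$-derivative at~$0$ equals~$i$ times the canonical-lift reconstruction~$i\, \CR^{E_\sh\LL(k)}U^{(i)}_\d$ (equivalently~$i\, u^{(i)}_{\sh;\d}(\LL(k))$), where for~$i=1$ this is the linear term and for~$i=2$ the factor~$2$ is the second-derivative combinatorial constant; (4) take~$\d \to 0$ using stability of all objects involved under model convergence in probability, extracting an a.s.\ subsequential limit. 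I expect step~(3) — correctly matching the~$\kappa$-Taylor coefficients of the translated smooth solution with the canonical-lift reconstruction, including the combinatorial constant~$i$ — to be the main obstacle, since it requires carefully tracking how the finite chaos expansion of~$\hat u^{(i)}_{\sh;\d}$ interacts with Cameron--Martin shifts; the model-convergence step~(4) is routine given the continuity statements already invoked elsewhere in the paper, and steps~(1)--(2) are formal.
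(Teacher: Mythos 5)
Your proposal is correct in outline, but it takes a genuinely different route from the paper. The paper never leaves the abstract level: it differentiates the fixed-point equation for~$U^{(i)}_\kappa$ in~$\kappa$ to obtain explicit fixed-point equations for~$U^{(1,1)}$ and~$U^{(2,2)}$ (noting along the way that~$U^{(1,2)} = 0$ because it solves a linear \emph{homogeneous} problem), and then observes that these equations coincide with those for~$i\,U^{(i)}$ after the substitution~$\<wn> \rightsquigarrow \<cm2>$. Since the resulting modelled distributions contain only Cameron--Martin symbols, the models~$E_{(\sh,k)}\hbz(\omega)$ and~$E_\sh\LL(k)$ act identically on them, so the reconstructions agree --- deterministically, with no mollification, no chaos decomposition, and no limit in~$\d$. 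You instead differentiate the reconstruction identity~\eqref{lem:aux1:eq2} $i$ times, compute the right-hand side classically at the mollified level by exploiting that~$u^{(i)}_{\sh;\d}$ is a degree-$i$ polynomial functional of the noise (so that~$\partial_\kappa^i\sVert[0]_{\kappa=0}$ of the shifted solution yields~$i!$ times the fully $k$-substituted multilinear term, which with the normalisation~$u^{(i)} = \partial_\eps^i\sVert[0]_{\eps=0}u^\eps$ and~$i \leq 2$ equals~$i\,u^{(i)}_\sh(\LL(k))$), and then pass to the limit~$\d \to 0$ using convergence of the models in probability. This works: the renormalisation counterterm is~$\kappa$-independent and drops out under~$\partial_\kappa^2\sVert[0]_{\kappa=0}$, which is the analogue in your setting of the paper's observation that~$U^{(2,2)}$ contains no instance of~$\<wn>$, and the $\d$-wise identity equates a random variable with a $\d$-independent deterministic quantity, so the limit is immediate. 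What each approach buys: the paper's route is shorter, purely algebraic, and makes transparent \emph{why} the left-hand side is non-random; yours makes the link to Cameron--Martin shifts and the chaos calculus explicit, which is conceptually closer to how the lemma is then used in~\thref{prop:Qh_second_chaos_aux2}, at the price of the extra probabilistic limiting step and the multilinear bookkeeping you yourself identify as the main obstacle.
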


\begin{proof}
	At first, we want to compute the FP eq. for~$U_\rho^{(i)}$. To this end, we can simply revisit the formulas of~\cite[Coro.~2.28]{friz_klose_22} and replace~$\<wn> \rightsquigarrow \<wn> + \rho \<cm2>$, $U^{(j)} \rightsquigarrow U^{(j)}_\rho$ for~$j \leq i$, and~$E_\sh \rightsquigarrow E_{(\sh,k)}$.
	Applying~$\partial^{i}_{\rho}\sVert[0]_{\rho = 0}$ to the resulting expression then gives~$U^{(i,i)}$.
	\begin{itemize}
		\item Let~$i=1$. Doing the replacements just described, we have
		\begin{equation*}
			U^{(1)}_{\rho} = 
			\CP^{\operatorname{e}_{(\sh,k)}}\del[1]{G'(W) U^{(1)}_{\rho} \<cm>} 
			+ \CP^{\operatorname{e}_{(\sh,k)}}\del[1]{G(W) [\<wn> + \rho \<cm2>]} 
		\end{equation*}
		which then implies
		\begin{equation}
			U^{(1,1)} = \CP^{\operatorname{e}_{(\sh,k)}}\del[1]{G'(W) U^{(1,1)} \<cm> + G(W) \<cm2>}.
			\label{eq:U11_abstr}
		\end{equation}
		Also, note that one can immediately infer that~$U^{(1,2)} = 0$ because it solves a linear,~\emph{homogeneous} FP eq.
		\item Let~$i=2$. Similarly as before, we have
		\begin{equation*}
			U^{(2)}_\rho = 
			\CP^{\operatorname{e}_{(\sh,k)}}\del[1]{G'(W) U^{(2)}_\rho \<cm>
				+ G''(W) \sbr[1]{U^{(1)}_\rho}^2 \<cm>
				+ 2 G'(W) U^{(1)}_\rho [\<wn> + \rho \<cm2>]} 
		\end{equation*}
		and then
		\begin{align*}
			\partial_\rho U_\rho^{(2)}
			& =
			\CP^{\operatorname{e}_{(\sh,k)}}
			\del[1]{
				G'(W) \del[1]{\partial_\rho U^{(2)}_\rho} \<cm> 			
				+ 2 G''(W) \sbr[1]{\partial_\rho U^{(1)}_\rho} U^{(1)}_\rho \<cm>
			} \\
			& +
			\CP^{\operatorname{e}_{(\sh,k)}}
			\del[1]{
				2 G'(W) (\partial_\rho U^{(1)}_\rho) [\<wn> + \rho \<cm2>]
				+ 2 G'(W) U^{(1)}_\rho \<cm2> 
			}
		\end{align*}
		Finally, using that~$U^{(1,2)} = 0$, we obtain the formula
		\begin{align*}
			U^{(2,2)}
			& =
			\CP^{\operatorname{e}_{(\sh,k)}}
			\del[1]{
				G'(W) U^{(2,2)} \<cm> 
				+ 4 G'(W) U^{(1,1)} \<cm2>
				+ 2 G''(W) \sbr[1]{U^{(1,1)}}^2 \<cm>
			}. 
		\end{align*}
	\end{itemize}
	Finally we observe that~$U^{(i,i)}$ \emph{formally} agrees with~$i U^{(i)}$, cf.~\cite[Coro.~2.28]{friz_klose_22} again, except that in the defining fixed-point equation we have replaced~$\<wn> \rightsquigarrow \<cm2>$. 
	While~$U^{(i,i)}$ and~$iU^{(i)}$ live in modelled distribution spaces w.r.t. $E_{(\sh,k)}\hbz(\omega)$ and~$E_\sh \LL(k)$, resp. -- that is: in different Banach spaces -- their reconstructions live in the~\emph{same} space again.
	In fact, the previous arguments show that they coincide; the claim follows.
\end{proof}

We are ready to prove Proposition~\ref{prop:Qh_second_chaos_aux2}.

\begin{proof}
	Recall from~\cite[Lem.~3.20]{cfg} that~$\hbz(\omega+h) = T_h \hbz(\omega)$ holds for all~$h \in \CH$ and~$\mu$-a.e. $\omega \in B$. By definition of~$Q_\sh$, we thus have
	\begin{equation*}
		Q_\sh\del[1]{\hbz(\omega + \rho k)}
		=
		DF\sVert[0]_{w_\sh}\sbr[1]{\CR^{E_\sh T_{\rho k} \hbz(\omega)} U^{(2)}}
		+
		D^2 F\sVert[0]_{w_\sh}\sbr[1]{\CR^{E_\sh T_{\rho k} \hbz(\omega)} U^{(1)}}^2
	\end{equation*}
	and, by (bi-)linearity of~$DF\sVert[0]_{w_\sh}$ and~$D^2 F\sVert[0]_{w_\sh}$ in conjunction with Lemmas~\ref{lem:aux0} -- \ref{lem:aux2},
	\begin{align*}
		\partial^2_\rho\sVert[0]_{\rho = 0} Q_\sh\del[1]{\hbz(\omega + \rho k)}
		& =
		DF\sVert[0]_{w_\sh}\sbr[1]{2 \CR^{E_\sh \LL(k)} U^{(2)}}
		+
		2 D^2 F\sVert[0]_{w_\sh}\sbr[1]{\CR^{E_\sh \LL(k)} U^{(1)}}^2 \\
		& =
		2 Q_\sh(\LL(k))
		=
		2 A_\sh(k,k).
	\end{align*}
	Note that~$A_\sh(k,k)$ is~\emph{deterministic}. In particular, this implies that one can exchange the order of differentiation and expectation in~\eqref{prop:Qh_second_chaos_aux2:eq1}; essentially, this follows by dominated convergence, cf.~\cite[Coro.~$2.8.7$ and~Ex.~$2.12.68$]{bogachev}.  
	The proof is complete.
\end{proof}

\begin{proposition} \label{prop:Qh_second_chaos_aux1}
	For all~$k,\ell \in \CH$, we have
	\begin{equation}
		\partial_{\nu}\sVert[0]_{\nu = 0} \partial_{\rho}\sVert[0]_{\rho = 0} \E\sbr[1]{Q_\sh\del[1]{\hbz(\cdot + \rho k + \nu \ell)}}
		=
		2 C(k,\ell)
		\label{prop:Qh_second_chaos_aux1:eq1}.
	\end{equation}
\end{proposition}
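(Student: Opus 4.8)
The plan is to sidestep the pathwise, regularity-structures computation used for~\thref{prop:Qh_second_chaos_aux2} and instead read the mixed derivative off directly from the chaos decomposition of~$\hat{Q}_\sh$ provided by~\thref{prop:sec_wic_carleman}, exploiting quasi-invariance of the Gaussian measure under Cameron--Martin shifts. Write~$\Psi := Q_\sh \circ \hbz \colon B \to \R$, so that~$\Psi = \hat{Q}_\sh$ as a random variable and~$\Psi(\omega + h) = Q_\sh\del[1]{\hbz(\omega + h)}$ for~$h \in \CH$. For each fixed~$\kappa,\nu \in \R$ the Cameron--Martin theorem then gives
\begin{equation*}
	\E\sbr[1]{Q_\sh\del[1]{\hbz(\cdot + \kappa k + \nu \ell)}}
	=
	\E\sbr[1]{\hat{Q}_\sh \, \mathcal{E}(\kappa k + \nu \ell)},
	\qquad
	\mathcal{E}(h) := \exp\del[1]{\delta(h) - \tfrac{1}{2}\norm[0]{h}_\CH^2},
\end{equation*}
where~$\delta(h)$ is the Paley--Wiener integral, so that~$\delta(k) = \xi(k)$. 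The virtue of this rewriting is that all the dependence on~$(\kappa,\nu)$ now sits in the smooth, explicit density~$\mathcal{E}$, while~$\hat{Q}_\sh$ no longer carries any.

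Next I would differentiate under the expectation. A direct computation yields
\begin{equation*}
	\partial_\nu\sVert[0]_{\nu = 0}\partial_\kappa\sVert[0]_{\kappa = 0}\, \mathcal{E}(\kappa k + \nu \ell)
	=
	\delta(k)\delta(\ell) - \langle k,\ell\rangle_\CH
	=
	I_2\del[1]{k \mathbin{\widetilde{\otimes}} \ell},
\end{equation*}
the last equality being the product formula for first-order multiple integrals, with~$k \mathbin{\widetilde{\otimes}} \ell$ the symmetrised tensor. The interchange of the two derivatives with~$\E$ is the only genuinely technical point: for~$(\kappa,\nu)$ in a bounded neighbourhood of the origin the relevant difference quotients of~$\mathcal{E}$ are dominated, in every~$L^p(\mu)$ with~$p < \infty$, by a fixed integrable random variable built out of Gaussian exponentials and polynomials in~$\delta(k),\delta(\ell)$, whereas~$\hat{Q}_\sh \in \CH_2 \oplus \CH_0$ lies in~$\bigcap_{p<\infty} L^p(\mu)$ by hypercontractivity of the Ornstein--Uhlenbeck semigroup; H\"older's inequality and dominated convergence (cf.~\cite{bogachev}) then justify passing the derivatives inside. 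This produces
\begin{equation*}
	\partial_\nu\sVert[0]_{\nu = 0}\partial_\kappa\sVert[0]_{\kappa = 0}\,
	\E\sbr[1]{Q_\sh\del[1]{\hbz(\cdot + \kappa k + \nu \ell)}}
	=
	\E\sbr[1]{\hat{Q}_\sh \, I_2\del[1]{k \mathbin{\widetilde{\otimes}} \ell}}.
\end{equation*}

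Finally I would evaluate the right-hand side using~$\hat{Q}_\sh = \E\sbr[0]{\hat{Q}_\sh} + I_2(C)$ from~\eqref{eq:decomp_Qh}. Since~$I_2\del[0]{k \mathbin{\widetilde{\otimes}} \ell} \in \CH_2$ is orthogonal to the constants, the~$\E\sbr[0]{\hat{Q}_\sh}$-part drops out; the isometry for second-order multiple integrals together with the symmetry of~$C$ and the identification of symmetric Hilbert--Schmidt operators with symmetric bilinear forms used throughout then gives~$\E\sbr[0]{I_2(C) \, I_2(k \mathbin{\widetilde{\otimes}} \ell)} = 2 \langle C, k \otimes \ell\rangle_{\CH^{\otimes 2}} = 2\, C(k,\ell)$ (see e.g.~\cite{nualart}), which is precisely~\eqref{prop:Qh_second_chaos_aux1:eq1}. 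I expect the domination estimate for differentiating under the integral to be the only real obstacle; everything else is bookkeeping. As an alternative one could mimic the proof of~\thref{prop:Qh_second_chaos_aux2} verbatim, extending the regularity structure by a \emph{third} Cameron--Martin symbol representing~$\ell$ (and enlarging the vector extension operator accordingly) so as to reproduce the left-hand side pathwise as the mixed Malliavin derivative~$\nabla_\ell \nabla_k \hat{Q}_\sh$; this is, however, considerably heavier and ultimately redundant given the chaos argument above.
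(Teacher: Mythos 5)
Your proposal is correct and follows essentially the same route as the paper: the paper likewise rewrites the shifted expectation via the Cameron--Martin density, computes the mixed derivative of the exponential to obtain $\omega(k)\omega(\ell) - \langle k,\ell\rangle_\CH$, and then uses the decomposition $\hat{Q}_\sh = \E[\hat{Q}_\sh] + I_2(C)$ together with the product formula and the isometry $\E[I_2(C)I_2(k\otimes\ell)] = 2C(k,\ell)$. Your justification of the interchange of differentiation and expectation (H\"older plus hypercontractivity) is somewhat more explicit than the paper's appeal to dominated convergence, but the argument is the same in substance.
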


\begin{proof} \label{lem:contrib_QH_zero_chaos}
	For~$\mu = \operatorname{Law}(\xi)$ as before,~$\nu, \rho \in \R$, and~$k, \ell \in \CH$, we let
	\begin{equation*}
		\mu_{\rho k + \nu \ell} := (\mathfrak{T}_{\rho k + \nu \ell})_* \mu
	\end{equation*}
	be the push-forward of~$\mu$ under the translation
	\begin{equation*}
		\mathfrak{T}_{\rho k + \nu \ell}: B \to B, \quad \mathfrak{T}_{\rho k + \nu \ell} \, \omega := \omega + \rho k + \nu \ell.
	\end{equation*}
	By~\cite[Lem.~$3.20$]{cfg} we have~$\hbz(\omega + \rho k + \nu \ell) = T_{\rho k + \nu \ell} \hbz(\omega)$ for~$\mu$-a.e.~$\omega \in B$. With~$\hat{Q}_\sh = Q_\sh(\hbz)$ in mind, we therefore have
	\begin{equation*}
		\E_\mu\sbr[1]{Q_\sh(\hbz)(\cdot + \rho k + \nu \ell)} 
		= \E_\mu\sbr[1]{Q_\sh(\hbz(\cdot + \rho k + \nu \ell))} 
		= E_{\mu_{\rho k + \nu \ell}}\sbr[1]{\hat{Q}_\sh}.
	\end{equation*}
	By the Cameron--Martin Theorem~\cite[Prop.~2.26]{daprato-zabczyk}, we know that
	\begin{equation*}
		\frac{\dif \mu_{\rho k + \nu \ell}}{\dif \mu}(\omega) = \exp\del[1]{-\II(\rho k + \nu \ell) + \rho \omega(k) + \nu \omega(\ell)}
	\end{equation*}
	where~$\omega(k) = I(k)(\omega)$ with the Paley-Wiener map~$I$ and~$\II = \nicefrac{1}{2} \norm[0]{\cdot}_\CH^2$ as before. Hence,
	\begin{equation*}
		\E_{\mu_{\rho k + \nu \ell}} \sbr[1]{\hat{Q}_\sh}
		=
		\int_B \hat{Q}_\sh(\omega) \mu_{\rho k + \nu \ell}(\dif \omega)
		=
		\int_B \hat{Q}_\sh(\omega) \exp\del[1]{-\II(\rho k + \nu \ell) + \rho \omega(k) + \nu\omega(\ell)} \mu(\dif \omega).
	\end{equation*}
	Recall that we found in the proof of proposition~\ref{prop:Qh_second_chaos_aux2} that one can exchange the order of differentiation and expectation in~\eqref{prop:Qh_second_chaos_aux1:eq1} and note that
	\begin{equation*}
		\II(\rho k + \nu \ell) = \frac{1}{2}\del[1]{\rho^2 \norm[0]{k}_\CH^2 + 2 \nu \rho \scal{k,\ell}_{\CH} + \nu^2 \norm[0]{\ell}_\CH^2}.
	\end{equation*}
	Therefore, we can easily compute the following derivative:
	\begin{equation}
		\partial_{\nu}\sVert[0]_{\nu = 0} \partial_{\rho}\sVert[0]_{\rho = 0} \exp\del[1]{-\II(\rho k + \nu \ell) + \rho \omega(k) + \nu\omega(\ell)} 
		=
		\omega(k) \omega(\ell) - \scal{k,\ell}_\CH
		\label{lem:contrib_QH_zero_chaos:eq1}
	\end{equation}
	Recall that~$\hat{Q}_\sh = \E_\mu\sbr[0]{\hat{Q}_\sh} + I_2(C)$ which implies the identity
	\begin{equation}
		\int_B \hat{Q}_\sh(\omega) \omega(k)\omega(\ell) \mu(\dif \omega)
		=
		\E_\mu\sbr[0]{\hat{Q}_\sh} \scal{k,\ell}_\CH
		+
		\int_B I_2(C)(\omega) \omega(k)\omega(\ell) \mu(\dif \omega)
		\label{lem:contrib_QH_zero_chaos:eq2}
	\end{equation}
	where the first summand is due to the fact that~$\E_\mu[\xi(k)\xi(\ell)] = \scal{k,\ell}_\CH$. 
	It is a consequence of standard Wiener calculus~(see for example~\cite[Prop.~$1.1.2$]{nualart}) that 
	\begin{equation*}
		\omega(k)\omega(\ell) = I_2(k \otimes \ell)(\omega) + \scal{k,\ell}_\CH,
	\end{equation*}
	which then gives 
	\begin{align}
		\int_B I_2(C)(\omega) \omega(k)\omega(\ell) \mu(\dif \omega)
		& 
		=
		\int_B I_2(C)(\omega) I_2(k \otimes \ell) \mu (\dif \omega) + \scal{k,\ell}_\CH \E_\mu[I_2(C)] \label{lem:contrib_QH_zero_chaos:eq3} \\
		&
		=
		\E_\mu\sbr[1]{I_2(C) I_2(k \otimes \ell)}
		\notag
	\end{align}
	because~$\E_\mu[I_2(C)] = 0$. Recall from~\cite[p.$9$, pt.~(iii)]{nualart} that
	\begin{equation}
		\E_\mu\sbr[1]{I_2(e_n \otimes e_n) I_2(k \otimes \ell)} = 2\scal{e_n,k}_\CH \scal{e_n,\ell}_\CH
		\label{lem:contrib_QH_zero_chaos:eq4}
	\end{equation}
	so that~$\E_\mu\sbr[1]{I_2(C) I_2(k \otimes \ell)} = 2\scal{C, k \otimes \ell}_{\CH^{\otimes 2}} = 2C(k,\ell)$. Combining this observation with the identities in~\eqref{lem:contrib_QH_zero_chaos:eq1} -- \eqref{lem:contrib_QH_zero_chaos:eq4}, the claim follows.
\end{proof}

\subsection{The component in the zero-th chaos} \label{sec:comp_zeroth_chaos}

In order to have a full understanding of the chaos decomposition for~$\hat{Q}_\sh$, it remains to calculate the zero-th chaos component or, in other words, the expected value~$\E\sbr[0]{\hat{Q}_\sh}$.

As we remarked in the introduction, and rigorously proved in Theorem~\ref{prop:Qh_second_chaos}, the operator~$A_\sh$ given in~\eqref{eq:op_A} characterises~$\pi_2 (\hat{Q}_\sh)$. 
It is well-known that iterated stochastic integrals obey an \emph{It\^{o}-type} chain rule, see for example~\cite[App.~A]{chandra_weber} for a concise introduction, resulting in a lower-order correction~$\pi_0(A_\sh[\xi_\d,\xi_\d]) \neq 0$ at the level of the mollified noise~$\xi_\d$.
Clearly, given that the former expression is \emph{quadratic} in the noise, one cannot in general expect convergence as~$\d \to 0$ and, instead, needs to~\emph{renormalise} certain ill-defined products. However, close inspection of~$A_\sh[\xi_\d,\xi_\d]$ suggests that this is not the case for all the products that appear in its definition. Let us give some heuristics to assist the reader in gaining an intuition.

\paragraph*{Heuristics.} The following considerations are \emph{not rigorous}: \label{para:heuristics}
\begin{enumerate}[label=(\arabic*)]
	\item \label{heur:1} Certainly,~$v_{\sh,\xi_\d} = u_\sh^{(1)}(\bz^{\xi_\d})$ from~\eqref{eq:vhk} needs no renormalisation, for it is \emph{linear} in the noise, see~\cite[Lem.~$2.34$]{friz_klose_22} for a proof.
	\item \label{heur:2} Increased regularity of~$\sh$ established in Corollary~\ref{lemma:Nzero} suggests that it is plausible for all the products in~\eqref{eq:vhkl:1} to be well-defined, even when~$\d \to 0$. (They are, see proposition~\ref{prop:q_trace_class} below.)
	\item \label{heur:3} On the other hand, the products in~\eqref{eq:vhkl:2} are ill-defined. We re-write eq.~\eqref{eq:vhk} as
	\begin{equation*}
		v_{\sh,\xi_\d}(t) 
		= \sbr[1]{\operatorname{Id} - R}^{-1}\del[3]{\int_0^\cdot P_{\cdot - s}(g(w_\sh(s))\xi_\d) \dif s}
		= \sum_{k=0}^\infty R^{\circ k} \del[3]{\int_0^\cdot P_{\cdot - s}(g(w_\sh(s))\xi_\d) \dif s}
	\end{equation*}
	by \emph{Neumann series} for some appropriate operator~$R$, ignoring issues of convergence. Taking only the constant term ($k = 0$) into account, we have
	\begin{equation*}
		v_{\sh,\xi_\d}(s) \approx \int_0^s P_{s-r}\del[1]{g(w_\sh(r)) \xi_\d} \dif r
	\end{equation*}	
	at~\enquote{zero-th order}. Hence, the product~$v_{\sh,\xi_\d}(s)\xi_\d$ in~\eqref{eq:vhkl:2} contains a summand of type\footnote{Here, $K$ is the Green's function of the Laplacian~$\Delta$ on~$\T^2$.} \mbox{$(K * \xi_\d)\xi_\d$} which needs to be renormalised as~$\d \to 0$.
	This is consistent with the fact that the gPAM regularity structure encodes such a product by the symbol~$\<11>$, the only one that needs re\-nor\-ma\-li\-sa\-tion.
\end{enumerate}
These observations suggest to \enquote{outsource} the singular products from~\eqref{eq:vhkl:2} into another operator~$\tilde{A}_\sh$, so that $A_\sh - \tilde{A}_\sh$ only contains products that need not be renormalised.
Returning to the side of rigour, we make these heuristics precise.

\paragraph*{--- Splitting off the \enquote{singular part}~$\tilde{A}$ from~$A$.}	

As before, let~$k,\ell \in \CH$. In accordance with our heuristics, we decompose~$v_{\sh,k,\ell}$ from~\eqref{eq:vhkl:1}--\eqref{eq:vhkl:2} into $v_{\sh,k,\ell} = v_{\sh,k,\ell}^{(\<cms>)} + v_{\sh,k,\ell}^{(\<wns>)}$ where
\begin{align}
	v_{\sh,k,\ell}^{(\<cms>)}(t)
	& =
	\int_0^t P_{t-s}\sbr[1]{g'(w_\sh(s)) v_{\sh,k,\ell}^{(\<cms>)}(s) \sh} \dif s 
	+ 
	\int_0^t P_{t-s}\sbr[1]{g''(w_\sh(s)) v_{\sh,k}(s) v_{\sh,\ell}(s) \sh} \dif s \label{e:vhkl_cm}\\
	v_{\sh,k,\ell}^{(\<wns>)}(t)
	& =
	\int_0^t P_{t-s}\sbr[1]{g'(w_\sh(s)) v_{\sh,k,\ell}^{(\<wns>)}(s) \sh} \dif s 
	+ 
	\int_0^t P_{t-s}\sbr[1]{g'(w_\sh(s)) v_{\sh,k}(s) \ell
		+ g'(w_\sh(s)) v_{\sh,\ell}(s) k} \dif s \notag
\end{align}
with~$v_{\sh,k,\ell}^{(\<wns>)}$ the \emph{singular} and $v_{\sh,k,\ell}^{(\<cms>)}$ the \emph{non-singular} part. (The reason for this choice of super-scripts will become clear in~eq.'s~\eqref{eq:decomp_U2} -- \eqref{eq:decomp_U2:reconstr} below.)
We then define the operator
\begin{equation}
	\tilde{A}_\sh[k,\ell] := DF\sVert[0]_{w_\sh}\del[1]{v_{\sh,k,\ell}^{(\<wns>)}}.
	\label{eq:Atilde}
\end{equation}
Recall that $B = \CC^{-1-\kappa}(\T^2)$ is the Banach space in the abstract Wiener space~$(B,\CH,\mu)$.	

\begin{proposition} \label{prop:q_trace_class}
	The bilinear form~$q_\sh := A_\sh - \tilde{A}_\sh$ given by 
	\begin{equation}
		q_\sh[k,\ell] 
		= DF\sVert[0]_{w_\sh}\sbr[1]{v_{\sh,k,\ell}^{(\<cms>)}}
		+ 
		D^2 F\sVert[0]_{w_\sh}\sbr[0]{v_{\sh,k}, v_{\sh,\ell}}
		\label{eq:def_q}
	\end{equation}
	is trace-class and can be continuously extended to act on~$B \x B$, with the same operator norm.
\end{proposition}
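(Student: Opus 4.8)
The whole proposition reduces to the single continuity estimate
\[
  \abs[0]{q_\sh[k,\ell]} \aac \norm[0]{k}_{\CC^{-1-\kappa}(\T^2)}\,\norm[0]{\ell}_{\CC^{-1-\kappa}(\T^2)}, \qquad k,\ell \in \CC^\infty(\T^2).
\]
Once this is available, the continuous extension of $q_\sh$ to $B\times B = \CC^{-1-\kappa}(\T^2)\times\CC^{-1-\kappa}(\T^2)$ with unchanged operator norm follows from the density of $\CC^\infty(\T^2)$ in $B$ (in the separable versions of all spaces, as agreed in Subsection~\ref{sec:first_order_opt}) together with the general fact that a uniformly continuous bilinear map on a dense subset extends uniquely and isometrically to the completion; concretely the extension is realised by the same formula~\eqref{eq:def_q}, now with $v_{\sh,k}$ and $v_{\sh,k,\ell}^{(\<cms>)}$ solved from their mild equations with data $k,\ell\in B$, which is legitimate precisely by the estimates below. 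The trace-class property is then Goodman's theorem: a bilinear form on the Cameron--Martin space $\CH$ of an abstract Wiener space $(B,\CH,\mu)$ that is continuous for the $B$-norm automatically defines a trace-class (nuclear) operator on $\CH$. Symmetry of $q_\sh$ is immediate from~\eqref{eq:def_q}, since the mild equation for $v_{\sh,k,\ell}^{(\<cms>)}$ is symmetric in $(k,\ell)$ and $D^2F\sVert[0]_{w_\sh}$ is a symmetric bilinear form; and $q_\sh$ is a genuine bilinear form on $\CH\times\CH$ to begin with, being $A_\sh-\tilde A_\sh$ with $A_\sh$ Hilbert--Schmidt by~\thref{prop:Qh_second_chaos} (note moreover that, unlike in~\eqref{eq:vhkl:2}, every product appearing in~\eqref{eq:def_q} is classically well defined, which is the whole point of splitting off the singular part).

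\textbf{The bound on $v_{\sh,k}$.} I would first run a singular-Gronwall argument on~\eqref{eq:vhk} of the same type as in~\thref{prop:ex_det_gpam_Hgamma} and Subsection~\ref{sec:der_eq:gpam}, but now anchored at the \emph{improved} regularity $\sh\in H^{1-2\kappa}(\T^2)\embed\CC^{-2\kappa}(\T^2)$ from~\thref{lemma:Nzero}, which also supplies $g(w_\sh),g'(w_\sh)\in\CC_T\CC^{1+2\kappa}(\T^2)$. The inhomogeneous term is controlled because the product $g(w_\sh(s))\,k$ lies in $\CC^{-1-\kappa}$ — its well-posedness uses $(1+2\kappa)+(-1-\kappa)=\kappa>0$ — and time-convolution with $P$ gains $2-\kappa$ derivatives, giving a contribution in $\CC_T\CC^{1-2\kappa}$ of norm $\aac\norm[0]{k}_{\CC^{-1-\kappa}}$. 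The homogeneous term $g'(w_\sh(s))\,v_{\sh,k}(s)\,\sh$ is controlled because, bootstrapping $v_{\sh,k}(s)\in\CC^{1-2\kappa}$ against $\sh\in\CC^{-2\kappa}$, one has $(1-2\kappa)+(-2\kappa)>0$, so this product lands in $\CC^{-2\kappa}$ and its $P$-convolution in $\CC_T\CC^{2-3\kappa}\embed\CC_T\CC^{1-2\kappa}$. Singular Gronwall then closes the loop and yields $\norm[0]{v_{\sh,k}}_{\CC_T\CC^{1-2\kappa}}\aac\norm[0]{k}_{\CC^{-1-\kappa}}$.

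\textbf{Assembling the bound on $q_\sh$.} Feeding this into the mild equation for $v_{\sh,k,\ell}^{(\<cms>)}$ and using $g''(w_\sh)\in\CC_T\CC^{1+2\kappa}(\T^2)$ — here the hypothesis $g''(0)=0$ of~\thref{thm:main_result} enters, through~\thref{lem:comp_sobolev} — the source $g''(w_\sh(s))\,v_{\sh,k}(s)\,v_{\sh,\ell}(s)\,\sh$ lies in $\CC^{-2\kappa}$ with norm $\aac\norm[0]{k}_{\CC^{-1-\kappa}}\norm[0]{\ell}_{\CC^{-1-\kappa}}$, and the identical Gronwall argument gives $\norm[0]{v_{\sh,k,\ell}^{(\<cms>)}}_{\CC_T\CC^{2-3\kappa}}\aac\norm[0]{k}_{\CC^{-1-\kappa}}\norm[0]{\ell}_{\CC^{-1-\kappa}}$. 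Now plug these into $q_\sh[k,\ell]=DF\sVert[0]_{w_\sh}\del[1]{v_{\sh,k,\ell}^{(\<cms>)}}+D^2F\sVert[0]_{w_\sh}\sbr[0]{v_{\sh,k},v_{\sh,\ell}}$: since $\CC^{2-3\kappa}\embed\CC^{1-\kappa}=\CC^\eta$, the first summand is bounded by $\norm[0]{DF\sVert[0]_{w_\sh}}_{(\CC_T\CC^\eta)^*}\,\norm[0]{v_{\sh,k,\ell}^{(\<cms>)}}_{\CC_T\CC^\eta}$; for the second summand $v_{\sh,k}$ only lies in $\CC_T\CC^{1-2\kappa}\subsetneq\CC_T\CC^\eta$, so one first extends $D^2F\sVert[0]_{w_\sh}$ from $\CC_T\CC^\eta$ to $\CC_T\CC^{1-2\kappa}$ by the same density argument used for $DF\sVert[0]_{w_\sh}$ in Subsection~\ref{sec:first_order_opt}, and then bounds it by $\norm[0]{v_{\sh,k}}_{\CC_T\CC^{1-2\kappa}}\norm[0]{v_{\sh,\ell}}_{\CC_T\CC^{1-2\kappa}}$. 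The displayed continuity estimate, and with it the proposition, follows.

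\textbf{The main obstacle.} The only real work is the regularity bookkeeping in the two Gronwall steps: every product above is borderline, and the decisive one — $g'(w_\sh)\,v_{\sh,k}\,\sh$, which appears both in~\eqref{eq:vhk} and, through $v_{\sh,k}$, in the source for $v_{\sh,k,\ell}^{(\<cms>)}$ — is simply ill-defined if $\sh$ is known only to lie in $\CH=L^2(\T^2)\embed\CC^{-1-\kappa}(\T^2)$, since then the relevant Hölder exponents would sum to $-2\kappa$ minus the Schauder loss, i.e. a negative number. It is exactly the gain $\sh\in\CC^{-2\kappa}(\T^2)$ coming from~\thref{thm:reg_minimiser}/\thref{lemma:Nzero} that makes the bootstrap close; this is the ``required estimate for Goodman's theorem'' alluded to in the introduction. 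Beyond that one should only check that the various $\kappa$-dependent thresholds ($\kappa<\tfrac14$ for the products against $\sh$, $\kappa<\tfrac12$ for $\CC^{2-3\kappa}\embed\CC^{1-\kappa}$, and the non-integrality conditions from~\thref{thm:reg_minimiser}) are all covered by the standing requirement that $\kappa$ be small enough.
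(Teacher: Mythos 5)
Your proposal is correct and follows essentially the same route as the paper: the Goodman estimate $\abs[0]{q_\sh[k,\ell]} \aac \norm[0]{k}_B\norm[0]{\ell}_B$ obtained by re-running the fixed-point/Gronwall analysis with data in $B$, made possible precisely by the improved regularity $\sh\in H^{1-2\kappa}$ from \thref{lemma:Nzero}, followed by Goodman's theorem for the trace-class property and density/bilinearity for the extension. The only (harmless) difference is bookkeeping: you embed $\sh$ into $\CC^{-2\kappa}$ and work entirely in H\"older scales, landing $v_{\sh,k,\ell}^{(\<cms>)}$ in $\CC_T\CC^{2-3\kappa}$, whereas the paper tracks the products against $\sh$ in $H^{1-2\kappa}$ and $B^{1-2\kappa}_{1,1}$ and only embeds $H^{2-3\kappa}\embed\CC^{1-3\kappa}$ at the end (hence its extension of $DF\sVert[0]_{w_\sh}$ to $\CC_T\CC^{1-3\kappa}$, which your slightly better H\"older exponent lets you skip).
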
	

\begin{remark}
	One should think of~$k,\ell$ as typical realisations~$\xi(\omega) \in B$ of the spatial white noise~$\xi$.
\end{remark}

For the proof that follows, recall the results from Corollary~\ref{lemma:Nzero}.
\begin{proof}
	In order to apply Goodman's Theorem~\cite[Thm.~$4.6$]{kuo} and infer the trace-class property of~$q_\sh$, we need to establish the estimate
	\begin{equation} \label{eq:est_goodman}
		\abs[0]{q_\sh[k,\ell]} \aac \norm[0]{k}_{B} \norm[0]{\ell}_B.
	\end{equation}
	\paragraph*{-- Analysis of~$v_{\sh,k}$.}
	We first amend and extend the analysis of~$v_{\sh,k}$ presented in Subsection~\ref{sec:der_eq:gpam} and, similarly to the proof of Proposition~\ref{prop:summary_der_eq}, omit the standard fixed-point argument. 
	Instead, we focus again on the spatial regularity of~$v_{\sh,k}$ and now consider~$k \in B$ (rather than~$k \in H^{-\theta}$) which presents the same kind of problem as before but leads to different regularity results. 
	\begin{itemize}[itemsep=5pt]
		\item By looking at the analysis of the term~$J_{\sh,k}$ in~\eqref{eq:vhk_mild}, we see that for~$g(w_\sh(s))k \in \CC^{-1-\kappa}$ to be well-defined by Proposition~\ref{prop:mult_besov}, we require
		\begin{equation} \label{eq:reg_min_product}
			(\gamma + 1 - \kappa) + (-1-\kappa) > 0 
		\end{equation}
		if we assume that~$\sh \in H^\gamma$ and~$w_\sh \in C_T\CC^{\gamma + 1 - \kappa}$, cf.~Proposition~\ref{prop:ex_det_gpam_Hgamma}, and combine that with Lemma~\ref{lem:comp_sobolev}. 
		The previous condition is equivalent to~$\gamma > 2 \kappa$; therefore, we apply Corollary~\ref{lemma:Nzero} (which is valid under the conditions of Theorem~\ref{thm:main_result}) to get~$\sh \in H^{\gamma_\star}$ for~$\gamma_\star := 3\kappa$, see also Remark~\ref{rmk:reg_u_0} below.
		\begin{center}
			$\boxed{\text{This is where we crucially require better than Cameron--Martin regularity of~$\sh \in H^{\gamma_\star}$.}}$
		\end{center}
		As in~\eqref{eq:aux_1}, the heat kernel estimates of Proposition~\ref{prop:heat_besov} then imply the estimate
		\begin{equs}[][eq:Jhk_Goodman]
			\thinspace &
			\norm[0]{J_{\sh,k}(t)}_{\CC^{1-2\kappa}} 
			= \,
			\norm[3]{\int_0^t P_{t-s} [g(w_\sh(s))k] \dif s}_{\CC^{1-2\kappa}} 
			\aac
			\int_0^t \norm[0]{P_{t-s} [g(w_\sh(s))k]}_{\CC^{1-2\kappa}} \dif s \notag \\
			\aac \thinspace &
			\int_0^t (t-s)^{\frac{1}{2}(-2 + \kappa)} \norm[0]{g(w_\sh(s))k}_{\CC^{-1 - \kappa}} \dif s 
			\aac
			\frac{2}{\kappa} T^{\frac{\kappa}{2}}
			\norm[0]{g(w_\sh)}_{C_T \CC^{1 +2\kappa}} \norm[0]{k}_{B}
		\end{equs}
		\emph{At best}, we therefore have~$v_{\sh,k} \in C_T \CC^{1-2\kappa}$ which is our standing assumption for now.
		\item We now turn to~$I_{\sh,k}$ in~\eqref{eq:vhk_mild} and, since $g'(w_\sh) \in C_T \CC^{1+2\kappa}$ and~$\sh \in H^{3\kappa}$, we find
		\begin{equation*}
			g'(w_\sh(s))v_{\sh,k}(s) \in \CC^{1-2\kappa}, \quad 
			g'(w_\sh(s))v_{\sh,k}(s)\sh \in H^{3\kappa} \embed \CC^{-1+2\kappa}.
		\end{equation*}
		Like in~\eqref{eq:reg_min_term_B_case1}, we then find
		\begin{equs}[eq:Ihk_Goodman]
			\norm[0]{I_{\sh,k}(t)}_{\CC^{1 -2\kappa}}
			& = \,
			\norm[3]{\int_0^t P_{t-s} [g'(w_\sh(s)) v_{\sh,k}(s)\sh] \dif s}_{\CC^{1 -2\kappa}} \\
			& \lesssim \,
			\int_0^t (t-s)^{2\kappa} \norm[0]{g'(w_\sh(s)) v_{\sh,k}(s)\sh}_{\CC^{-1 + 2\kappa}} \dif s \notag\\
			& \aac \, 
			\norm{g'(w_\sh)}_{C_T \CC^{1 + 2\kappa}} \norm[0]{\sh}_{H^{3\kappa}}
			\int_0^t (t-s)^{2\kappa} \norm[0]{v_{\sh,k}(s)}_{\CC^{1 -2\kappa}} \dif s.
		\end{equs}
		\item Finally, we combine~\eqref{eq:Jhk_Goodman} and~\eqref{eq:Ihk_Goodman} and apply Gronwall's Lemma.
		Altogether, like in~\eqref{prop:summary_der_eq:gronwall} we find that
		\begin{equation} \label{eq:vhk_Goodman}
			\norm[0]{v_{\sh,k}(t)}_{\CC^{1 -2\kappa}}
			\leq C  \exp\del[3]{C \int_0^t (t-s)^{2\kappa} \dif s} \norm[0]{k}_{B}
			= \tilde{C} \norm[0]{k}_{B}
			\aac
			\norm[0]{k}_{B}
		\end{equation}
		where~$C$ is as defined in~\eqref{prop:summary_der_eq:value_constant} with~$\gamma = \gamma_\star = 3\kappa$.
	\end{itemize}
	These observations allow to deal with the second summand in~\eqref{eq:def_q} since the same arguments apply to~$v_{\sh,\ell}$ for~$\ell \in B$. 
	\paragraph*{-- Analysis of~$v_{\sh,k,\ell}^{(\<cms>)}$.}
	As before, we omit the fixed-point argument and focus on the spatial regularities.
	For the first summand in~\eqref{eq:def_q}, we analyse the products within the equation for~$v_{\sh,k,\ell}^{(\<cms>)}$ in~\eqref{e:vhkl_cm} in the same way, starting with the \emph{inhomogeneous part}.
	\begin{itemize}[itemsep=5pt]
		\item We have~$v_{\sh,k}(s)v_{\sh,\ell}(s) \in \CC^{1-2\kappa}$ and then~$v_{\sh,k}(s)v_{\sh,\ell}(s)\sh \in H^{3\kappa}$ since~$\sh \in H^{3\kappa}$.
		\item Since~$g''(w_\sh) \in C_T\CC^{1 + 2\kappa}$ by Lemma~\ref{lem:comp_sobolev}, it follows that~$g''(w_\sh(s)) v_{\sh,k}(s)v_{\sh,\ell}(s)\sh \in H^{3\kappa} \embed \CC^{-1+2\kappa}$.
		\item As before, by the regularising effect of the heat semi-group we then have
		\begin{equs}[e:vhkl_cm_inhom]
			\thinspace &
			\norm[3]{\int_0^t P_{t-s}\sbr[1]{g''(w_\sh(s)) v_{\sh,k}(s) v_{\sh,\ell}(s) \sh} \dif s}_{\CC^{1+\kappa}} \\[5pt]
			\aac \thinspace & \thinspace
			\int_0^t \norm[0]{P_{t-s} [g''(w_\sh(s)) v_{\sh,k}(s) v_{\sh,\ell}(s) \sh]}_{\CC^{1+\kappa}} \dif s \\[5pt]
			\aac \thinspace & \thinspace
			\int_0^t (t-s)^{\frac{1}{2}(-2 + \kappa)} \norm[0]{g''(w_\sh(s)) v_{\sh,k}(s) v_{\sh,\ell}(s) \sh}_{\CC^{-1 + 2\kappa}} \dif s  \\[5pt]
			\aac \thinspace & \thinspace
			\frac{2}{\kappa} T^{\frac{\kappa}{2}}
			\norm[0]{g(w_h)}_{C_T \CC^{1 +2\kappa}} 
			\norm[0]{v_{\sh,k}}_{C_T \CC^{1 -2\kappa}}
			\norm[0]{v_{\sh,\ell}}_{C_T \CC^{1 -2\kappa}} 
			\norm[0]{\sh}_{H^{3\gamma}} \\[5pt]
			\aac \thinspace & \thinspace
			\tilde{C}^2
			\frac{2}{\kappa} T^{\frac{\kappa}{2}}
			\norm[0]{g(w_h)}_{C_T \CC^{1 +2\kappa}} 
			\norm[0]{k}_B
			\norm[0]{\ell}_B
			\norm[0]{\sh}_{H^{3\gamma}}
		\end{equs}
		where we have used the estimate~\eqref{eq:vhk_Goodman} for~$v_{\sh,k}$ and~$v_{\sh,\ell}$ in the last step.		
		\emph{At best}, we therefore have~$v_{\sh,k,\ell}^{(\<cms>)} \in C_T \CC^{1+\kappa}$, our standing assumption from now on.
	\end{itemize}
	Concerning the \emph{homogeneous part} of~$v_{\sh,k,\ell}^{(\<cms>)}$, we have:
	\begin{itemize}[itemsep=5pt]
		\item The products~$g'(w_\sh(s)) v_{\sh,k,\ell}^{(\<cms>)}(s) \in \CC^{1+\kappa}$ and~$g'(w_\sh(s)) v_{\sh,k,\ell}^{(\<cms>)}(s)\sh \in H^{3\kappa} \embed \CC^{-1+2\kappa}$ are well-defined.
		\item The regularising effect of the heat semi-group implies that
		\begin{equs}[e:vhkl_cm_hom]
			\thinspace &
			\norm[3]{\int_0^t P_{t-s}\sbr[1]{g'(w_\sh(s)) v_{\sh,k,\ell}^{(\<cms>)}(s) \sh} \dif s}_{\CC^{1+\kappa}} 
			\aac
			\int_0^t \norm[0]{P_{t-s} [g'(w_\sh(s)) v_{\sh,k,\ell}^{(\<cms>)}(s) \sh]}_{\CC^{1+\kappa}} \dif s \\[5pt]
			\aac \thinspace & \thinspace
			\int_0^t (t-s)^{\frac{1}{2}(-2 + \kappa)} \norm[0]{g'(w_\sh(s)) v_{\sh,k,\ell}^{(\<cms>)}(s) \sh}_{\CC^{-1 + 2\kappa}} \dif s  \\[5pt]
			\aac \thinspace & \thinspace
			\norm[0]{g'(w_{\sh})}_{C_T \CC^{1 +2\kappa}} 
			\norm[0]{\sh}_{H^{3\gamma}} 
			\int_0^t
			(t-s)^{\frac{1}{2}(-2 + \kappa)}
			\norm[0]{v_{\sh,k,\ell}^{(\<cms>)}(s)}_{\CC^{1 + \kappa}} \dif s 
		\end{equs}
	\end{itemize}
	We can now combine the fixed-point equation~\eqref{e:vhkl_cm} for~$v_{\sh,k,\ell}^{(\<cms>)}$ with the estimates~\eqref{e:vhkl_cm_inhom} and~\eqref{e:vhkl_cm_hom} to obtain
	\begin{equation} \label{e:vhkl_cm_combined}
		\norm[0]{v_{\sh,k,\ell}^{(\<cms>)}(t)}_{\CC^{1 + \kappa}} 
		\leq
		\bar{C}\del[2]{
			\norm[0]{k}_B
			\norm[0]{\ell}_B
			+
			\int_0^t
			(t-s)^{\frac{1}{2}(-2 + \kappa)}
			\norm[0]{v_{\sh,k,\ell}^{(\<cms>)}(s)}_{\CC^{1 + \kappa}} \dif s} 
	\end{equation}
	where
	\begin{equation*}
		\bar{C} 
		\simeq 
		\tilde{C}^2
		\frac{2}{\kappa} T^{\frac{\kappa}{2}}
		\norm[0]{g(w_h)}_{C_T \CC^{1 +2\kappa}}
		\norm[0]{\sh}_{H^{3\gamma}}
		+
		\norm[0]{g'(w_{\sh})}_{C_T \CC^{1 +2\kappa}} 
		\norm[0]{\sh}_{H^{3\gamma}}. 
	\end{equation*}
	Gronwall's Lemma applied to~\eqref{e:vhkl_cm_combined} then leads to the estimate
	\begin{equation} \label{e:vhkl_cm_gronwall}
		\norm[0]{v_{\sh,k,\ell}^{(\<cms>)}(t)}_{\CC^{1 + \kappa}} 
		\leq
		\bar{C}
		\norm[0]{k}_B
		\norm[0]{\ell}_B
		\exp\del[3]{\bar{C} \int_0^t (t-s)^{\frac{1}{2}(-2+\kappa)} \dif s}
		\aac 
		\norm[0]{k}_B
		\norm[0]{\ell}_B
	\end{equation}
	\paragraph*{-- The final bound.}
	In order to obtain the desired bound~\eqref{eq:est_goodman}, recall the assumptions on~$F$ in Theorem~\ref{thm:main_result}.
	In particular, they can be seen to imply (since~$1-5\kappa < \mu$ for~$\mu \in \{1 + \kappa, 1-2\kappa\}$) that
	\begin{equation*}
		DF\sVert[0]_{w_\sh} \in \CL\del[1]{C_T \CC^{1+\kappa},\R}, \quad
		D^2F\sVert[0]_{w_\sh} \in \CL^{(2)}\del[1]{C_T \CC^{1-2\kappa},\R}
	\end{equation*}
	which, combined with~\eqref{eq:vhk_Goodman} and~\eqref{e:vhkl_cm_gronwall}, lead to the estimate
	\begin{equs}
		\abs[0]{q_\sh[k,\ell]} 
		& \aac
		\norm[0]{D^2F\sVert[0]_{w_\sh}}_{\CL^{(2)}\del[0]{C_T \CC^{1-2\kappa},\R}}
		\norm[0]{v_{\sh,k}(t)}_{\CC^{1 -2\kappa}} \norm[0]{v_{\sh,\ell}(t)}_{\CC^{1 -2\kappa}} \\
		& \quad +
		\norm[0]{DF\sVert[0]_{w_\sh}}_{\CL\del[0]{C_T \CC^{1+\kappa},\R}} 
		\norm[0]{v_{\sh,k,\ell}^{(\<cms>)}(t)}_{\CC^{1 + \kappa}} \\[5pt]
		& \aac \norm[0]{k}_{B} \norm[0]{\ell}_B.
	\end{equs} 
	The claim in~\eqref{eq:est_goodman} follows and the addendum is a trivial consequence of that estimate by bilinearity of~$q_\sh$.
\end{proof}		

\begin{remark} \label{rmk:reg_u_0}
	In the proof of the previous proposition, various arguments -- notably~\eqref{eq:reg_min_product} -- require the regularity condition $g^{(m)}(w_\sh) \in C_T\CC^{1 + 2\kappa}$ for~$m \in \{0,1,2\}$, where the exponent is~$1+2\kappa = 1 - \kappa + \gamma_\star$ for~$\gamma_\star = 3 \kappa$.  
	In turn, this requires increased regularity of the minimiser, namely~$\sh \in H^{\gamma_\star}$, as well as~$u_0 \in \CC^{1+2\kappa}(\T^2)$ for Proposition~\ref{prop:ex_det_gpam_Hgamma} to apply; the validity of the previous statements under the assumptions of Theorem~\ref{thm:main_result} has been checked in Corollary~\ref{lemma:Nzero} above. 
	
	However, let us emphasise that, in the setting of~Theorem~\ref{thm:main_result}, we only require~$u_0 \in \CC^{1-\kappa}(\T^2)$ to conclude that $\sh \in H^{\gamma_\star}$ -- hence, there is no contradiction to Theorem~\ref{thm:reg_minimiser}.
\end{remark}

Let~$\tilde{q}_\sh \in \CL(\CH)$ be the operator associated to~$q_\sh$ via~$\scal{\tilde{q}_\sh h_1,h_2}_\CH = q_\sh[h_1,h_2]$ for all~$h_1,h_2 \in \CH$. 
Since~$\tilde{q}_\sh$ is trace-class, it is also compact, so the well-known Spectral Theorem applies:  We denote the eigenvalues of~$\tilde{q}_\sh$ by~$(\mu_k)_{k \in \N} \in \ell^1(\R)$ and the corresponding eigenbasis by~$(e_k)_{k \in \N} \subseteq \CH$.

We will write~$q_\sh(h) := q_\sh[h,h]$ and note that~$\CH^* \ni q_\sh \neq \tilde{q}_\sh \in \CL(\CH)$.

\begin{lemma}
	The identity
	\begin{equation*}
		q_\sh(\xi) = \sum_{n \in \N} \mu_n \del[0]{\xi(e_n)^2-1} + \operatorname{Tr} q_\sh 
	\end{equation*}
	holds almost-surely. In particular,~$\E[q_\sh(\xi)] = \operatorname{Tr} q_\sh$.
\end{lemma}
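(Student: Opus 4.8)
The plan is to prove the identity first for a smooth approximation of $\xi$, where it is just the Wiener--It\^o formula for a bilinear form, and then to pass to the limit, using that $q_\sh$ extends continuously to $B\times B$ by \thref{prop:q_trace_class}. Fix a symmetric mollifier $(\rho_\d)_{\d>0}$ on $\T^2$, put $\xi_\d := \rho_\d * \xi \in \CC^\infty(\T^2)\subseteq\CH$ and $e_n^\d := \rho_\d * e_n$, so that $e_n^\d\to e_n$ in $\CH$ with $\norm[0]{e_n^\d}_\CH\le 1$. It is classical that $\xi_\d \to \xi$ in $B = \CC^{-1-\kappa}(\T^2)$ almost surely and in $L^p(\Omega;B)$ for every $p<\infty$ (see e.g.~\cite{cfg}), and Fernique's theorem gives $\sup_{\d}\E[\norm[0]{\xi_\d}_B^p]<\infty$.

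First I would record the smooth-noise identity. Since $\xi_\d\in\CH$ and $\tilde q_\sh$ is self-adjoint with $\tilde q_\sh e_n = \mu_n e_n$, we have $q_\sh(\xi_\d) = q_\sh[\xi_\d,\xi_\d] = \scal{\tilde q_\sh\xi_\d,\xi_\d}_\CH = \sum_n \mu_n\,\xi(e_n^\d)^2$, where $\scal{\rho_\d*\xi,e_n}_\CH = \xi(\rho_\d*e_n)$ by symmetry of $\rho_\d$. Inserting the Wiener--It\^o square formula $\xi(e_n^\d)^2 = I_2(e_n^\d\otimes e_n^\d) + \norm[0]{e_n^\d}_\CH^2$ and interchanging $\sum_n$ with $I_2$ (legitimate since $\sum_n\abs{\mu_n}\norm[0]{e_n^\d\otimes e_n^\d}_{\mathrm{HS}}\le\sum_n\abs{\mu_n}<\infty$) gives
\begin{equation*}
	q_\sh(\xi_\d) = I_2\Bigl(\textstyle\sum_n \mu_n\, e_n^\d\otimes e_n^\d\Bigr) + \sum_n \mu_n\norm[0]{e_n^\d}_\CH^2 .
\end{equation*}
Then I would let $\d\to 0$. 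On the left, bilinearity and $B$-continuity of $q_\sh$ together with the $L^p(\Omega;B)$-convergence and Fernique's bound yield $q_\sh(\xi_\d)\to q_\sh(\xi)$ in $L^2(\Omega)$, via $\E[\abs{q_\sh(\xi_\d)-q_\sh(\xi)}^2]\aac\E[\norm[0]{\xi_\d-\xi}_B^2(\norm[0]{\xi_\d}_B^2+\norm[0]{\xi}_B^2)]\to 0$ by Cauchy--Schwarz. On the right, dominated convergence in the index $n$ (dominating sequence $\abs{\mu_n}$, with $e_n^\d\otimes e_n^\d\to e_n\otimes e_n$ in Hilbert--Schmidt norm) shows $\sum_n\mu_n e_n^\d\otimes e_n^\d\to\tilde q_\sh$ in Hilbert--Schmidt norm, hence $I_2(\cdot)\to I_2(\tilde q_\sh)$ in $L^2(\Omega)$, and $\sum_n\mu_n\norm[0]{e_n^\d}_\CH^2\to\sum_n\mu_n = \operatorname{Tr}\tilde q_\sh = \operatorname{Tr} q_\sh$. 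Matching limits gives $q_\sh(\xi) = I_2(\tilde q_\sh)+\operatorname{Tr} q_\sh$ in $L^2(\Omega)$; since $I_2(\tilde q_\sh) = \sum_n\mu_n I_2(e_n\otimes e_n) = \sum_n\mu_n(\xi(e_n)^2-1)$, where the series converges in $L^2(\Omega)$ \emph{and} almost surely (its terms being independent, centred, with $\sum_n\mu_n^2<\infty$), and the right-hand side therefore converges a.s., the $L^2$-identity holds almost surely, which is the first assertion. The ``in particular'' is then immediate, as $\E[q_\sh(\xi)] = \E[I_2(\tilde q_\sh)] + \operatorname{Tr} q_\sh = \operatorname{Tr} q_\sh$ because second Wiener--It\^o integrals are centred.

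The only genuine obstacle is justifying the interchange of the limit $\d\to0$ with the infinite sum over $n$; the point is that trace-class-ness of $q_\sh$ --- i.e.\ $(\mu_n)\in\ell^1(\R)$, which is precisely \thref{prop:q_trace_class} --- supplies the summable dominating sequence, and is also what makes $\operatorname{Tr} q_\sh$ (hence the right-hand side) well defined. Everything else is routine: the abstract Wiener space structure, the $B$-continuous extension of $q_\sh$, Fernique's theorem, and the elementary Gaussian facts $\xi(h)^2 = I_2(h\otimes h)+\norm[0]{h}_\CH^2$ and independence of $(\xi(e_n))_n$, cf.~\cite{nualart}.
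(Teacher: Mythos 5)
Your proof is correct and follows essentially the same route as the paper: establish the spectral identity $q_\sh(h)=\sum_n\mu_n\scal{h,e_n}_\CH^2$ for $h\in\CH$ and then transfer it to $\xi$ via the $B$-continuous extension from \thref{prop:q_trace_class}, with the trace-class property supplying the needed summability. The paper compresses the transfer step into the single sentence ``the claim follows by extension of $q_\sh$ to $B$,'' whereas you carry it out explicitly by mollification and an $L^2(\Omega)$ limit; that is a legitimate (and more detailed) implementation of the same idea, not a different argument.
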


\begin{proof}
	For any~$h \in \CH$, it is clear that
	\begin{equation*}
		q_\sh(h) 
		= \scal{\tilde{q}_\sh h,h}_\CH
		= \left \langle \sum_{n \in \N} \mu_n \scal{h,e_n}_\CH e_n, h \right\rangle_\CH
		= \sum_{n \in \N} \mu_n \scal{h,e_n}_\CH^2
		= \sum_{n \in \N} \mu_n \del[1]{\scal{h,e_n}_\CH^2 - 1} + \operatorname{Tr} q_\sh.
	\end{equation*}
	The claim follows by extension of~$q_\sh$ to~$B$ as in proposition~\ref{prop:q_trace_class}. The addendum is true because
	\begin{equation*}
		\xi(e_n)^2 - 1 = I_2(e_n \tp e_n)	
	\end{equation*}
	has vanishing expectation.
\end{proof}

In a next step, we will want to relate~$q_\sh(\xi)$ to the expression for~$\hat{Q}_\sh$ in~\eqref{eq:Q_h_repeated}. As we will see, this requires a systematic analysis of the renormalisation procedure.

\paragraph*{--- Relating~$q_\sh(\xi)$ to~$\hat{Q}_\sh$.}

For any~$\bz \in \MM$, we decompose
$U^{(2)}(\bz) 
= 
U^{(2,\<wns>)}(\bz) 
+
U^{(2;\<cms>)}(\bz)$
where
\begin{equs}[][eq:decomp_U2]
	U^{(2;\<wns>)}
	& = 
	\CP^{\esh}\del[1]{G'(W) U^{(2;\<wns>)} \<cm>} + 2 \CP^{\esh}\del[1]{G'(W) U^{(1)} \<wn>}, \\
	U^{(2;\<cms>)}
	& =
	\CP^{\esh}\del[1]{G'(W) U^{(2;\<cms>)} \<cm>}
	+
	\CP^{\esh}\del[1]{G''(W) \sbr[1]{U^{(1)}}^{\star 2} \<cm>}. \notag
\end{equs}
Recall from remark~\ref{rmk:rel_der} that~$u_{\xi_\d,\sh}^{(2)} = v_{\sh,\xi_\d,\xi_\d}$. Accordingly, the decomposition in~\eqref{eq:decomp_U2} is chosen in such a way that, for $\d > 0$, we have
\begin{equation}
	v_{\sh,\xi_\d,\xi_\d}^{(\sigma)} = \CR^{E_\sh \bz^{\xi_\d}} U^{(2;\sigma)}, \quad \sigma \in \{\<wns>, \<cms>\}.
	\label{eq:decomp_U2:reconstr}
\end{equation}
The following lemma makes point~\ref{heur:2} in the above heuristics precise: It states that all the products on the RHS of the equation for~$U^{(2;\<cms>)}$ are well-defined \emph{without} renormalisation.	

\begin{lemma} \label{lem:carleman_fredholm_non_renormalisation}
	The equality
	$\CR^{E_\sh \hbz^{\xi_\d}} U^{(2;\<cms>)} = \CR^{E_\sh \bz^{\xi_\d}} U^{(2;\<cms>)}$
	holds for any~$\d > 0$.
\end{lemma}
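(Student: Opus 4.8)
The plan is to fix $\d > 0$ and reduce the asserted identity to uniqueness for a classical linear mild equation. Since $\xi_\d$ is a smooth function, both $\CR^{E_\sh\hbz^{\xi_\d}}U^{(2;\<cms>)}$ and $\CR^{E_\sh\bz^{\xi_\d}}U^{(2;\<cms>)}$ are genuine space–time functions, and I would show that each of them solves
\begin{equation*}
	v(t) = \int_0^t P_{t-s}\big[g'(w_\sh(s))\,v(s)\,\sh\big]\dif s + \int_0^t P_{t-s}\big[g''(w_\sh(s))\,\big(u^{(1)}_{\sh;\d}(s)\big)^{2}\,\sh\big]\dif s ,
\end{equation*}
where $u^{(1)}_{\sh;\d} := \CR^{E_\sh\bz^{\xi_\d}}U^{(1)}$; the claim then follows from a Gronwall estimate identical to the uniqueness argument in \thref{prop:ex_det_gpam_Hgamma}.

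Two facts drive this. First, the first-order object is insensitive to renormalisation, $\CR^{E_\sh\hbz^{\xi_\d}}U^{(1)} = \CR^{E_\sh\bz^{\xi_\d}}U^{(1)} = u^{(1)}_{\sh;\d}$: by \eqref{eq:u_hd2} the reconstructed equation for $U^{(1)}$ carries no counterterm (heuristic~\ref{heur:1}), so both reconstructions solve $(\partial_t-\Delta)u = g(w_\sh)\xi_\d + u\,g'(w_\sh)\sh$ with zero initial datum, and they coincide by uniqueness. Second, I would use that for the gPAM regularity structure the renormalisation map relating $\hbz^{\xi_\d}$ to the canonical lift $\bz^{\xi_\d}$ acts non-trivially only on the single symbol $\<11> = \<1>\cdot\<wn>$, so the two models carry the same data on any modelled distribution whose expansion avoids $\<11>$. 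Now the modelled distribution $W$ lifting $w_\sh$ involves only symbols built from $\<cm>$, and $U^{(1)} = \CP^{\esh}(G'(W)U^{(1)}\<cm>) + \CP^{\esh}(G(W)\<wn>)$ involves only $\<cm>$, polynomials, and symbols in which every occurrence of $\<wn>$ sits under an abstract integration (the integration in $\CP$ only produces symbols of the form $\CI(\cdot)$, never a bare factor $\<wn>$); consequently $[U^{(1)}]^{\star 2}$, and then $G''(W)[U^{(1)}]^{\star 2}\<cm>$ as well as $G'(W)U^{(2;\<cms>)}\<cm>$, contain no occurrence of $\<11>$. Propagating this through the Picard iteration for \eqref{eq:decomp_U2} shows that $U^{(2;\<cms>)}$ never involves $\<11>$, so both $\CP$ and $\CR$ see the same model data for $\hbz^{\xi_\d}$ and for $\bz^{\xi_\d}$; reconstructing the right-hand side of \eqref{eq:decomp_U2} therefore produces no renormalisation constant, and — using \eqref{eq:decomp_U2:reconstr} to identify the canonical-model reconstruction with the classical object $v^{(\<cms>)}_{\sh,\xi_\d,\xi_\d}$ and the first fact to replace $v_{\sh,\xi_\d}$ by $u^{(1)}_{\sh;\d}$ — both reconstructions solve the displayed mild equation. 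This is the rigorous content of heuristic~\ref{heur:2}, and it also pinpoints where the divergent counterterm $-2g'(w_\sh)g(w_\sh)\fc_\d$ of \eqref{eq:u_hd1} originates: from the complementary piece $U^{(2;\<wns>)}$, whose inhomogeneity $2\,\CP^{\esh}(G'(W)U^{(1)}\<wn>)$ does involve $\<11>$.

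The main obstacle is precisely this bookkeeping: one must verify carefully, from the recursive structure of $W$ and $U^{(1)}$ and the fact that $\CP$ outputs only integrated symbols, that the splitting \eqref{eq:decomp_U2} genuinely confines the singular symbol $\<11>$ to $U^{(2;\<wns>)}$, so that the fixed-point equation for $U^{(2;\<cms>)}$ is invariant under the renormalisation. Everything else — identifying the two reconstructions with the stated mild equation for $\d > 0$, and uniqueness of its solution — is a routine contraction/Gronwall argument of the kind already carried out in \thref{prop:ex_det_gpam_Hgamma}.
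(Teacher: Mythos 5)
Your argument is correct, and it hinges on exactly the observation that drives the paper's proof: for gPAM the renormalisation map $M_\d$ acts non-trivially only on the symbol $\<11>$, while every summand on the right-hand side of the fixed-point equation for $U^{(2;\<cms>)}$ carries the factor $\<cm>$ rather than $\<wn>$, so its $\<11>$-coefficient vanishes. The difference lies in how this is converted into the claimed identity. The paper does it in one step at the level of reconstruction: writing $U^{(2;\<cms>)} = \CP Y$ with $Y := G'(W)U^{(2;\<cms>)}\<cm> + G''(W)\sbr[1]{U^{(1)}}^{\star 2}\<cm>$, it notes that
$(\CR^{E_\sh\hbz^{\xi_\d}}Y)(\bar z) = \del[1]{\Pi^{\xi_\d}_{\bar z}M_\d Y(\bar z)}(\bar z) = \del[1]{\Pi^{\xi_\d}_{\bar z} Y(\bar z)}(\bar z) = (\CR^{E_\sh\bz^{\xi_\d}}Y)(\bar z)$
because $\langle Y,\<11>\rangle = 0$, and then simply convolves with the heat kernel; no PDE is ever written down. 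You instead derive the classical mild equation satisfied by each reconstruction (no counterterm appearing, for the same reason), replace $v_{\sh,\xi_\d}$ by $u^{(1)}_{\sh;\d}$ using the insensitivity of the first-order object to renormalisation, and conclude by uniqueness via Gronwall as in \thref{prop:ex_det_gpam_Hgamma}. Your route is strictly longer, since it front-loads machinery that the paper only deploys later in \thref{prop:conv_eta} (where the reconstructed PDE and the counterterm $-2g'(w_\sh)g(w_\sh)\fc_\d$ for the genuinely singular part $U^{(2;\<wns>)}$ are computed); on the other hand, by passing to uniqueness at the classical level you avoid having to identify the abstract modelled distributions $U^{(2;\<cms>)}(\hbz^{\xi_\d})$ and $U^{(2;\<cms>)}(\bz^{\xi_\d})$ with one another, a point the paper's pointwise comparison of $\CR Y$ under the two models treats implicitly. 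Both arguments are valid.
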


\begin{proof}
	Let~
	$Y := G'(W) U^{(2;\<cms>)} \<cm>
	+
	G''(W) \sbr[1]{U^{(1)}}^{\star 2} \<cm>$. For each~$z \in \R^3$, we have
	\begin{equation}
		\CR^{E_\sh \hbz^{\xi_\d}} U^{(2;\<cms>)} (z)
		=
		\CR^{E_\sh \hbz^{\xi_\d}} \CP Y (z)
		=
		(P * \CR^{E_\sh \hbz^{\xi_\d}} Y) (z)
		=
		\int_{\R^3} P(z-\bar{z}) (\CR^{E_\sh \hbz^{\xi_\d}} Y) (\bar{z}) \dif \bar{z}
		\label{aux_pf_1}
	\end{equation}
	as well as
	\begin{equation*}
		(\CR^{E_\sh \hbz^{\xi_\d}} Y) (\bar{z})
		=
		\del[1]{\hat{\Pi}^{\xi_\d}_{\bar{z}} Y(\bar{z})}(\bar{z})
		=
		\del[1]{\Pi^{\xi_\d}_{\bar{z}} M_\d Y(\bar{z})}(\bar{z})
		\overset{(*)}{=}
		\del[1]{\Pi^{\xi_\d}_{\bar{z}} Y(\bar{z})}(\bar{z})
		=
		(\CR^{E_\sh \bz^{\xi_\d}} Y) (\bar{z}).
		\label{aux_pf_2}
	\end{equation*}
	The starred identity is true because $\scal{Y,\<11>} = 0$ (due to the factor~$\<cm>$) while~$M_\d$ only acts non-trivially on the symbol~$\<11>$.
	Therefore, all the equalities in~\eqref{aux_pf_1} are also valid with~$\hbz^{\xi_\d}$ replaced by~$\bz^{\xi_\d}$ and the respective quantities are equal. The claim follows.
\end{proof}

As a consequence of the previous lemma, we obtain the following decomposition of~$\hat{Q}_\sh$.

\begin{proposition} \label{prop:chaos_decomp_hessian}
	The identity
	\begin{equation}
		\hat{Q}_\sh = q_\sh(\xi) + DF\sVert[0]_{w_\sh}\del[1]{\CR^{E_\sh \hbz} U^{(2;\<wns>)}} 
		\label{prop:chaos_decomp_hessian:eq1}
	\end{equation}
	holds almost surely. In particular, we have~$\E\sbr[0]{\hat{Q}_\sh} = \operatorname{Tr} q_\sh + \lambda$ with
	\begin{equation}
		\lambda := \E\sbr[1]{DF\sVert[0]_{w_\sh}\del[1]{\CR^{E_\sh \hbz} U^{(2;\<wns>)}}}.
		\label{prop:chaos_decomp_hessian:eq2}
	\end{equation}
\end{proposition}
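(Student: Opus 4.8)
The plan is to prove \eqref{prop:chaos_decomp_hessian:eq1} at the level of the mollified noise $\xi_\d$ and then pass to the limit $\d \to 0$. With $\hat{u}^{(i)}_{\sh;\d} := u_\sh^{(i)}(\hbz^{\xi_\d})$ and $\hat{Q}_{\sh;\d} := Q_\sh(\hbz^{\xi_\d})$, equation \eqref{eq:Q_h_repeated} reads
\[
	\hat{Q}_{\sh;\d} = D^2F\sVert[0]_{w_\sh}\sbr[1]{\hat{u}^{(1)}_{\sh;\d},\hat{u}^{(1)}_{\sh;\d}} + DF\sVert[0]_{w_\sh}\sbr[1]{\hat{u}^{(2)}_{\sh;\d}}.
\]
First I would observe that the renormalisation map $M_\d$ acts non-trivially only on the symbol $\<11>$, which does not occur in $U^{(1)}$ since the latter is linear in the noise; hence $\hat{u}^{(1)}_{\sh;\d} = \CR^{E_\sh\hbz^{\xi_\d}}U^{(1)} = \CR^{E_\sh\bz^{\xi_\d}}U^{(1)} = v_{\sh,\xi_\d}$, the last identity being \eqref{der_eq:comparison} at the smooth level. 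For the second derivative I would use the splitting $U^{(2)} = U^{(2;\<wns>)} + U^{(2;\<cms>)}$ from \eqref{eq:decomp_U2}, combine \thref{lem:carleman_fredholm_non_renormalisation} with \eqref{eq:decomp_U2:reconstr}, and obtain
\[
	\hat{u}^{(2)}_{\sh;\d} = \CR^{E_\sh\hbz^{\xi_\d}}U^{(2;\<cms>)} + \CR^{E_\sh\hbz^{\xi_\d}}U^{(2;\<wns>)} = v_{\sh,\xi_\d,\xi_\d}^{(\<cms>)} + \CR^{E_\sh\hbz^{\xi_\d}}U^{(2;\<wns>)}.
\]
Substituting both identities and recognising, by the definition \eqref{eq:def_q} of $q_\sh$, that $D^2F\sVert[0]_{w_\sh}[v_{\sh,\xi_\d},v_{\sh,\xi_\d}] + DF\sVert[0]_{w_\sh}[v_{\sh,\xi_\d,\xi_\d}^{(\<cms>)}] = q_\sh[\xi_\d,\xi_\d]$, I would arrive at
\[
	\hat{Q}_{\sh;\d} = q_\sh[\xi_\d,\xi_\d] + DF\sVert[0]_{w_\sh}\del[1]{\CR^{E_\sh\hbz^{\xi_\d}}U^{(2;\<wns>)}}
\]
for every $\d > 0$.

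It then remains to let $\d \to 0$ in this identity. The left-hand side converges to $\hat{Q}_\sh$ in probability by continuity of $Q_\sh$ in the model together with $\hbz^{\xi_\d} \to \hbz$ in $\MM$, exactly as used in the proof of \thref{prop:sec_wic_carleman}. For the first term on the right I would invoke that $\xi_\d \to \xi$ in probability in $B = \CC^{-1-\kappa}(\T^2)$, together with the bilinear bound $\abs[0]{q_\sh[k,\ell]} \aac \norm[0]{k}_B\norm[0]{\ell}_B$ and the continuous bilinear extension of $q_\sh$ to $B \x B$ from \thref{prop:q_trace_class}; the decomposition $q_\sh[\xi_\d,\xi_\d] - q_\sh[\xi,\xi] = q_\sh[\xi_\d - \xi,\xi_\d] + q_\sh[\xi,\xi_\d - \xi]$ then forces convergence to $q_\sh(\xi)$ in probability. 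For the last term, continuity of the reconstruction and of the solution map $U^{(2;\<wns>)}$ in the (renormalised) model, together with $\hbz^{\xi_\d} \to \hbz$ and the boundedness of the extended $DF\sVert[0]_{w_\sh}$, give convergence to $DF\sVert[0]_{w_\sh}(\CR^{E_\sh\hbz}U^{(2;\<wns>)})$ in probability. Passing to a subsequence along which all three convergences hold almost surely establishes \eqref{prop:chaos_decomp_hessian:eq1}.

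For the \enquote{in particular} statement I would take expectations in \eqref{prop:chaos_decomp_hessian:eq1}. By \thref{prop:sec_wic_carleman} we have $\hat{Q}_\sh \in \CH_2 \oplus \CH_0$, and by the preceding lemma $q_\sh(\xi) = \sum_{n\in\N}\mu_n(\xi(e_n)^2 - 1) + \operatorname{Tr}q_\sh \in \CH_2 \oplus \CH_0$; both are therefore square-integrable, so the remaining summand $DF\sVert[0]_{w_\sh}(\CR^{E_\sh\hbz}U^{(2;\<wns>)}) = \hat{Q}_\sh - q_\sh(\xi)$ is integrable and $\lambda$ in \eqref{prop:chaos_decomp_hessian:eq2} is well-defined. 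Since $\E[q_\sh(\xi)] = \operatorname{Tr}q_\sh$ by the same lemma, \eqref{prop:chaos_decomp_hessian:eq1} yields $\E[\hat{Q}_\sh] = \operatorname{Tr}q_\sh + \lambda$.

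The step I expect to require the most care is the bookkeeping behind the two reconstruction identities at mollification level: one must make sure that $\CR^{E_\sh\hbz^{\xi_\d}}U^{(1)}$ and $\CR^{E_\sh\hbz^{\xi_\d}}U^{(2;\<cms>)}$ genuinely coincide with the PDE solutions $v_{\sh,\xi_\d}$ resp. $v_{\sh,\xi_\d,\xi_\d}^{(\<cms>)}$ with \emph{no} hidden renormalisation correction -- for the first this rests on the absence of $\<11>$ in $U^{(1)}$, for the second it is precisely \thref{lem:carleman_fredholm_non_renormalisation} -- and that the various H\"older--Besov spaces entering the limit $\d \to 0$ are chosen so that the extended functionals $DF\sVert[0]_{w_\sh}$ and $D^2F\sVert[0]_{w_\sh}$ from \thref{prop:q_trace_class} act continuously along the approximating sequence. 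The remaining convergences are routine consequences of the continuity of the gPAM solution map in the model and of the convergence $\hbz^{\xi_\d} \to \hbz$ of the renormalised models.
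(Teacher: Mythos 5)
Your proposal is correct and follows essentially the same route as the paper: mollify, split $U^{(2)}$ via \eqref{eq:decomp_U2}, use \thref{lem:carleman_fredholm_non_renormalisation} together with \eqref{eq:decomp_U2:reconstr} and the identity $\hat{u}^{(1)}_{\sh;\d}=u^{(1)}_{\sh;\d}$ to identify the non-singular part with $q_\sh(\xi_\d)$, and send $\d\to 0$. Your additional details on the convergence of each term and on the integrability needed for \eqref{prop:chaos_decomp_hessian:eq2} are correct fill-ins of steps the paper leaves implicit.
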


\begin{proof}
	Let~$\d > 0$. By definition of~$\hat{Q}_{\sh;\d}$ and~$q_\sh$ (see proposition~\ref{prop:q_trace_class}) combined with Lemmas~\ref{lem:carleman_fredholm_non_renormalisation} and~\cite[Lem.~$2.34$]{friz_klose_22} (which says that~$\hat{u}_{\sh;\d}^{(1)} = u_{\sh;\d}^{(1)}$), we find
	\begin{align*}
		\thinspace
		&
		\hat{Q}_{\sh;\d} - DF\sVert[0]_{w_\sh}\del[1]{\CR^{E_\sh \hbz^{\xi_\d}} U^{(2;\<wns>)}} 
		=
		DF\sVert[0]_{w_\sh} (\CR^{E_\sh \hbz^{\xi_\d}} U^{(2;\<cms>)}) 
		+
		D^2 F\sVert[0]_{w_\sh} \sbr[1]{\hat{u}_{\sh;\d}^{(1)}, \hat{u}_{\sh;\d}^{(1)}} \\
		= \ & 
		DF\sVert[0]_{w_\sh} (\CR^{E_\sh \bz^{\xi_\d}} U^{(2;\<cms>)}) 
		+
		D^2 F\sVert[0]_{w_\sh} \sbr[1]{u_{\sh;\d}^{(1)}, u_{\sh;\d}^{(1)}}
		=
		q_\sh(\xi_\d).
	\end{align*}
	Note that we have also used remark~\ref{rmk:rel_der} and~eq.~\eqref{eq:decomp_U2:reconstr}. 
	The claim follows by sending~$\d \to 0$.
\end{proof}

Finally, we can prove the main result of this article, Theorem~\ref{thm:main_result}.

\begin{proof} \label{pf:thm_1}
	A generic expression for~$a_0$ has been given in corollary~\ref{coro:general}. The different terms appearing in it have been characterised in Theorem~\ref{prop:Qh_second_chaos} and proposition~\ref{prop:chaos_decomp_hessian}, respectively. The claimed formula~\eqref{thm:eq_a0} for~$a_0$ follows.   
\end{proof}

\subsection[Approximating~$\lambda$]{Approximating~$\lambda$}\label{subsec:eta}

Our objective was to obtain an \emph{explicit} formula for~$a_0$ --
yet, the definition of~$\lambda$ in~\eqref{prop:chaos_decomp_hessian:eq2} as the expectation of some operator applied to a reconstructed modelled distribution is quite intangible.
Therefore, in subsection~\ref{subsec:eta}, we will prove that~$\eta$ can be approximated by certain regularised versions~$\lambda_\d$ which are defined from classical objects and can thus be computed in practice.
We set
\begin{equation*}
	\tilde{u}^{(2)}_{\sh} := \CR^{E_\sh \hbz} U^{(2;\<wns>)},
	\quad
	\tilde{u}^{(2)}_{\sh;\d} := \CR^{E_\sh \hbz^{\xi_\d}} U^{(2;\<wns>)}.
\end{equation*} 
Recall from~\eqref{eq:decomp_U2} that~$U^{(2;\<wns>)}(\bz)$ satisfies a~\emph{linear} fixed-point equation in the space~$\DD^{\gamma,\eta}(E_\sh\bz)$ and, via its inhomogeneity, depends on~$U^{(1)}(\bz)$.
As we have sketched in the heuristics on page~\pageref{para:heuristics} above (and will further detail in the proof of proposition~\ref{prop:conv_eta} below), this necessitates to renormalise certain products: Despite the linearity of the equation, it is therefore impossible to write down a stochastic PDE for~$\tilde{u}_\sh^{(2)}$ (i.e. when~$\bz = \hbz$) and, a fortiori, to obtain a closed-form expression for~$\lambda$.

Instead, the theory of regularity structures allows to define~$\tilde{u}_\sh^{(2)}$ as the limit \emph{in probability} in~$\CX_T$ of~$\tilde{u}^{(2)}_{\sh;\d}$ as~$\d \to 0$. 
However, it is not clear whether that convergence also holds in~$L^1(\Omega;\CX_T)$ or, more generally, in~$L^p(\Omega;\CX_T)$ for~$p \geq 1$. 
That question is important for computing~$\lambda$ because 
\begin{itemize}[itemsep=3pt]
	\item one \emph{can} derive a linear stochastic PDE for~$\tilde{u}^{(2)}_{\sh;\d}$ when~$\d > 0$,
	\item solve it via standard techniques, and
	\item approximate~$\lambda_\delta := \E\sbr[0]{DF\sVert[0]_{w_\sh}\del[1]{\tilde{u}^{(2)}_{\sh;\d}}}$ numerically for~$\d \ll 1$, for example via Monte--Carlo methods.
\end{itemize}
The last step, in particular, would only be sensible if~$\lambda_\d \to \lambda$ as~$\d \to 0$. Indeed, that is true; the following proposition formalises the procedure we just outlined.

\begin{proposition}\label{prop:conv_eta}
	One can obtain~$\tilde{u}_{\sh;\d}^{(2)}$ by solving the following system of linear stochastic PDEs:
	\begin{equs}[][eq:pde_aux]
		(\partial_t - \Delta) \tilde{u}^{(2)}_{\sh;\d} 
		& = 
		2 g'(w_{\sh})u^{(1)}_{\sh;\d}  \xi_\d 
		- 2g'(w_{\sh})g(w_{\sh}) \fc_{\d} 
		+ \tilde{u}^{(2)}_{\sh;\d} g'(w_{\sh}) \sh \\
		(\partial_t - \Delta) u^{(1)}_{\sh;\d}
		& =
		g(w_{\sh})  \xi_\d 
		+ u^{(1)}_{\sh;\d} g'(w_{\sh}) \sh
	\end{equs}
	In addition, for any~$p \geq 1$ we have~$\tilde{u}^{(2)}_{\sh;\d} \to  \tilde{u}^{(2)}_{\sh}$ in~$L^p(\Omega;\CX_T)$ and then~$\lambda_\d \to \lambda$ as~$\d \to 0$. 
\end{proposition}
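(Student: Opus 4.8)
The plan is to prove the two assertions separately: first derive the system~\eqref{eq:pde_aux} for $\tilde{u}^{(2)}_{\sh;\d}$ by reconstructing the abstract linear fixed-point equation~\eqref{eq:decomp_U2} against the smoothed, renormalised model, and then upgrade the already-known convergence $\tilde{u}^{(2)}_{\sh;\d} \to \tilde{u}^{(2)}_{\sh}$ in probability in $\CX_T$ to convergence in $L^p(\Omega;\CX_T)$ for every $p \geq 1$. The limit $\lambda_\d \to \lambda$ will then be immediate, since $DF\sVert[0]_{w_\sh}$ is a bounded linear functional on $\CX_T$.

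\emph{The SPDE.} For $\d > 0$ the renormalised model is $\hbz^{\xi_\d} = M_\d \bz^{\xi_\d}$ with $\bz^{\xi_\d}$ the canonical lift of the smooth noise $\xi_\d$ and $M_\d$ the renormalisation map, so reconstructing the abstract integration operator against $E_\sh \hbz^{\xi_\d}$ yields a classical heat equation: $\CR^{E_\sh \hbz^{\xi_\d}} \CP Y = P * \CR^{E_\sh \hbz^{\xi_\d}} Y$ up to the harmonic part. Applied to~\eqref{eq:decomp_U2}, the homogeneous term $\CP^{\esh}(G'(W) U^{(2;\<wns>)} \<cm>)$ reconstructs \emph{without} renormalisation --- the factor $\<cm>$ annihilating $M_\d$, exactly as in the proof of~\thref{lem:carleman_fredholm_non_renormalisation} --- and contributes $\tilde{u}^{(2)}_{\sh;\d}\, g'(w_\sh)\, \sh$. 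In the inhomogeneous term $2\CP^{\esh}(G'(W) U^{(1)} \<wn>)$, on the other hand, the leading-order term of $U^{(1)}$ is $g(w_\sh)\,\<1>$, so $G'(W) U^{(1)} \<wn>$ contains $g'(w_\sh) g(w_\sh)\,\<11>$, the only symbol of negative homogeneity in the product, and $M_\d$ subtracts the constant $\fc_\d$ from $\<11>$; hence this term reconstructs to $2 g'(w_\sh) \hat{u}^{(1)}_{\sh;\d} \xi_\d - 2 g'(w_\sh) g(w_\sh) \fc_\d$, with $\hat{u}^{(1)}_{\sh;\d} = \CR^{E_\sh \hbz^{\xi_\d}} U^{(1)}$ solving the second line of~\eqref{eq:pde_aux} (i.e.~\eqref{eq:u_hd2}). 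This gives~\eqref{eq:pde_aux}. Equivalently, by~\eqref{eq:decomp_U2:reconstr} and~\thref{rmk:rel_der}, system~\eqref{eq:pde_aux} is just~\eqref{eq:u_hd1}--\eqref{eq:u_hd2} with the term $g''(w_\sh)(\hat{u}^{(1)}_{\sh;\d})^2 \sh$ --- the reconstruction of the non-singular piece $U^{(2;\<cms>)}$, which by~\thref{lem:carleman_fredholm_non_renormalisation} needs no renormalisation --- moved out.

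\emph{Upgrading the convergence.} For the $L^p$-statement I would argue by uniform integrability. The convergence-of-models theorem for gPAM, \cite[Sec.~3]{cfg}, gives $\sup_{\d \in (0,1]} \E \norm{\hbz^{\xi_\d}}_{\MM}^q < \infty$ for every $q \geq 1$; since the vector extension operator is deterministic and continuous on the relevant norms --- see App.~\ref{app:rs_extended}, using that $\sh \in H^{1-2\kappa}(\T^2)$ by~\thref{lemma:Nzero} --- the same bound holds for $E_\sh \hbz^{\xi_\d}$. The map sending a model to $\CR^{E_\sh(\,\cdot\,)} U^{(2;\<wns>)}$, namely the linear fixed point~\eqref{eq:decomp_U2} (whose data involves $U^{(1)}$, itself the solution of a fixed-point problem) post-composed with reconstruction, is continuous in the model with at most polynomial growth, by~\cite[Thm.~3.10 and~7.8]{hairer_rs} (cf. the estimates in~\cite[Sec.~2]{friz_klose_arxiv}). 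Consequently $\sup_{\d} \E \norm{\tilde{u}^{(2)}_{\sh;\d}}_{\CX_T}^q < \infty$ for all $q$, so the family $\{\norm{\tilde{u}^{(2)}_{\sh;\d} - \tilde{u}^{(2)}_{\sh}}_{\CX_T}^p\}_\d$ is uniformly integrable for each $p \geq 1$; combined with $\tilde{u}^{(2)}_{\sh;\d} \to \tilde{u}^{(2)}_{\sh}$ in probability (which follows from $\hbz^{\xi_\d} \to \hbz$ in probability, \cite{cfg}, and the same continuity), Vitali's convergence theorem promotes this to convergence in $L^p(\Omega;\CX_T)$.

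\emph{Conclusion, and the main difficulty.} Finally, since $DF\sVert[0]_{w_\sh} \in \CL(\CX_T,\R)$, the case $p = 1$ yields
\begin{equation*}
	\abs[0]{\lambda_\d - \lambda}
	\leq
	\norm[0]{DF\sVert[0]_{w_\sh}}_{\CL(\CX_T,\R)}\, \E \norm[1]{\tilde{u}^{(2)}_{\sh;\d} - \tilde{u}^{(2)}_{\sh}}_{\CX_T}
	\xrightarrow[\d \to 0]{} 0 ,
\end{equation*}
which completes the argument. I expect the first part to demand the most care: one has to pin down, inside the \emph{extended} regularity structure, exactly which products in~\eqref{eq:decomp_U2} the renormalisation map $M_\d$ acts on, and verify that the induced counterterm is precisely $-2 g'(w_\sh) g(w_\sh) \fc_\d$ and nothing more. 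The bookkeeping, however, follows the same mechanism already used in~\thref{lem:carleman_fredholm_non_renormalisation}, together with the description of the extension operator in App.~\ref{app:rs_extended}, so the remaining steps are routine.
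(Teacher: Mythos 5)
Your proposal is correct and follows essentially the same route as the paper: the counterterm is identified by computing the $\<11>$-component of the right-hand side of the linear fixed-point equation for $U^{(2;\<wns>)}$ (namely $\scal{\bar{H}(U^{(2;\<wns>)}),\<11>} = 2g'(w_\sh)g(w_\sh)$, with the $\<cm>$-term untouched by $M_\d$), and the $L^p$-convergence is obtained from uniform moment bounds on the models combined with polynomial growth of the solution/reconstruction map, convergence in probability, and uniform integrability. The only cosmetic difference is that the paper sources the uniform moment bounds from \cite[Thm.'s~$10.7$ and~$10.19$]{hairer_rs} together with the explicit estimate $\norm[0]{\tilde{u}^{(2)}_{\sh;\d}}_{\CX_T} \aac (1+\barnorm{\hbz^{\xi_\d,-}})^{2}$ of \cite{friz_klose_arxiv}, whereas you cite the model-convergence results of \cite{cfg}; the mechanism is the same.
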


\begin{proof}
	The \emph{system} in~\eqref{eq:pde_aux} can be derived in complete analogy to~\eqref{eq:u_hd1} and~\eqref{eq:u_hd2} by applying the me\-thods of proof for \cite[Prop.~$2.47$]{friz_klose_22}. 
	Thus, we shall only outline the parts pertaining to the rigorous implementation of the renormalisation procedure.
	First, recall that~$U^{(1)}$ satisfies an equation like~$U^{(1,1)}$ in~\eqref{eq:U11_abstr}, with~$\<cm2> \rightsquigarrow \<wn>$, which can be written as
	\begin{equation*}
		U^{(1)} = \CI\del[1]{H(U^{(1)})} + \bar{\CT}, \quad H(U^{(1)}) := G'(W)U^{(1)} \<cm> + G(W) \<wn>
	\end{equation*}
	where~\enquote{$\bar{\CT}$} denotes polynomial components. Given that~$G^{(\ell)}(W) \in \scal{\1,\<1g>,X}$ for any~$\ell \in \N_0$ for which it is well-defined, it can easily be seen that
	\begin{equation*}
		\scal{H(U^{(1)}),\<11>} = 0, \quad  
		\scal{U^{(1)},\<1>} = \scal{H(U^{(1)}),\<wn>} = \scal{G(W)\<wn>,\<wn>} = \scal{G(W),\1} = g(w_\sh).
	\end{equation*} 
	In the same way, we then rewrite eq.~\eqref{eq:decomp_U2} for~$U^{(2;\<wns>)}$ as
	\begin{equation}
		U^{(2;\<wns>)} = \CI\del[1]{\bar{H}(U^{(2;\<wns>)})} + \bar{\CT}, 
		\quad 
		\bar{H}(U^{(2;\<wns>)}) := G'(W)U^{(2;\<wns>)} \<cm> + 2G'(W) U^{(1)} \<wn>
		\label{prop:conv_eta:pf_formula_U2wn}
	\end{equation}
	from which one may infer that
	\begin{equation*}
		\scal{\bar{H}(U^{(2;\<wns>)}), \<11>}
		=
		2 \scal{G'(W) U^{(1)} \<wn>, \<11>}
		=
		2 \scal{G'(W),\1} \scal{U^{(1)}, \<1>}
		= 
		2 g'(w_\sh)g(w_\sh).
	\end{equation*}
	As a consequence, for~$z = (t,x) \in [0,T] \x \T^2$ we have 
	\begin{equs}
		(\partial_t - \Delta) \tilde{u}_{\sh;\d}^{(2)}(z)
		& = 
		(\partial_t - \Delta) \CR^{E_\sh\hbz^{\xi_\d}}U^{(2;\<wns>)}(z)
		=
		\CR^{E_\sh\hbz^{\xi_\d}}(\bar{H}(U^{(2;\<wns>)}))(z) \\
		& =
		\hat{\Pi}_z^{\xi_\d;\esh} (\bar{H}(U^{(2;\<wns>)})(z))(z)
		=
		\Pi_z^{\xi_\d;\esh} M_\d (\bar{H}(U^{(2;\<wns>)})(z))(z) \\
		& =
		\Pi_z^{\xi_\d;\esh} (\bar{H}(U^{(2;\<wns>)})(z))(z) - \scal{\bar{H}(U^{(2;\<wns>)})(z), \<11>} \fc_\d 
		= (\ldots) - 2g'(w_\sh(z))g(w_\sh(z))\fc_\d 
	\end{equs}
	and thereby rigorously derived the renormalisation counter-term that appears in~\eqref{eq:pde_aux}.
	
	\emph{Convergence statements.}
	We know that~$\tilde{u}_{\sh;\d}^{(2)} \to \tilde{u}_{\sh}^{(2)}$ in~$\CX_T$ in probability, so we need to establish uniform integrability.
	To this end, we will prove uniform~$L^p$ boundedness for~$p \geq 1$, i.e. the existence of a constant~$M > 0$ such that
	\begin{equation}
		\sup_{\d \in (0,1)} \E\sbr[1]{\ \norm[1]{\tilde{u}_{\sh;\d}^{(2)}}_{\CX_T}^p} \leq M.
		\label{pf:prop:conv_eta:unif_Lp_bdd}
		\tag{$\star$}
	\end{equation}
	The proofs of~\cite[Prop.~$2.39$ and Thm.~$2$]{friz_klose_22} (for~$m=2$) without any additional arguments imply that\footnote{For technical reasons, we needed to work with the~\emph{minimal} BPHZ model~$\hbz^{\xi_\d,-} \in \MM_-$ and the \emph{homogeneous} model norm~$\barnorm{\cdot}$ on~$\MM_-$ in the quoted article. In the present case, however, we can work with the~\enquote{standard} notions from Hairer's original article~\cite{hairer_rs} in light of the estimate~$\barnorm{\hbz^{\xi_\d,-}} \leq 1 + \threebars \hbz^{\xi_\d,-} \threebars \leq 1 + \threebars \hbz^{\xi_\d} \threebars$. The latter is true because of the estimate in~\cite[eq.~(A.$18$)]{friz_klose_22} together with the elementary inequality~$\sqrt{x} \leq 1 + x$ for~$x \geq 0$.}
	\begin{equation*}
		\norm[1]{\tilde{u}_{\sh;\d}^{(2)}}_{\CX_T}^p 
		\aac
		\del[1]{1 + \barnorm{\hbz^{\xi_\d,-}}}^{2p}
		\aac
		\del[1]{2 + \threebars \hbz^{\xi_\d} \threebars}^{2p}.
	\end{equation*}
	The triangle inequality for the semi-norm~$\threebars \cdot \threebars$ in conjunction with the elementary estimate $(a+b)^p \leq 2^{p-1} (a^p + b^p)$ for~$a,b \geq 0$ then implies
	\begin{equation*}
		\del[1]{2 + \threebars \hbz^{\xi_\d} \threebars}^{2p}
		\aac_p 1 + \threebars \hbz^{\xi_\d} ; \hbz \threebars^{2p} + \threebars \hbz \threebars^{2p}. 
	\end{equation*}
	By~\cite[Thm.'s~$10.7$ and~$10.19$]{hairer_rs}, we then have 
	\begin{equation*}
		\E\sbr[1]{\thinspace \norm[1]{\tilde{u}_{\sh;\d}^{(2)}}_{\CX_T}^p}
		\aac
		1 + \E\sbr[1]{\threebars \hbz^{\xi_\d};\hbz \threebars^{2p}} + \E\sbr[1]{\threebars \hbz \threebars^{2p}}
		\aac 
		1 +\d^{\vartheta p} + 1
		\leq 3
	\end{equation*}
	for each~$0 < \vartheta \ll 1$, so we have established~\eqref{pf:prop:conv_eta:unif_Lp_bdd} and thus that~$\tilde{u}^{(2)}_{\sh;\d} \to  \tilde{u}^{(2)}_{\sh}$ in~$L^p(\Omega;\CX_T)$.
	
	Finally, since we have assumed~$F$ to be Fréchet differentiable in (a neighbourhood of) \mbox{$w_\sh = \Phi(\LL(\sh))$}, its derivative~$DF\sVert[0]_{w_\sh}$ is a bounded, linear operator by definition. As a consequence, we have
	\begin{equation*}
		\abs[0]{\lambda_\d - \lambda}
		\leq
		\E\sbr[1]{\; \abs[1]{DF\sVert[0]_{w_\sh}\del[1]{\tilde{u}^{(2)}_{\sh;\d} - \tilde{u}^{(2)}_{\sh}}}}
		\aac
		\E\sbr[1]{\ \norm[1]{\tilde{u}_{\sh;\d}^{(2)} - \tilde{u}_{\sh}^{(2)}}_{\CX_T}}
		\to 0 \quad \text{as} \quad \d \to 0.
	\end{equation*}
\end{proof}

\begin{remark} \label{rmk:formal_der}
	Considering~$\hat{u}_{\sh;\d}^\eps$ as in~\eqref{eq:gpam_shifted_eps} (with~$\xi \rightsquigarrow \xi_\d$ and~\enquote{$+\infty$} $\rightsquigarrow \fc_\d$) and formally applying~$\partial^i_\eps\sVert[0]_{\eps = 0}$
	leads to the equations for~$\hat{u}_{\sh;\d}^{(i)}$, $i=1,2$, in~\eqref{eq:u_hd1} and~\eqref{eq:u_hd2} above. Note the following:
	\begin{itemize}
		\item For~$i=1$, no products need to be renormalised, so~$\hat{u}_{\sh;\d}^{(1)} = u_{\sh;\d}^{(1)}$, cf.~\cite[Lem.~$2.34$]{friz_klose_22}.
		\item For~$i=2$, the part of~$\hat{u}^{(2)}_{\sh;\d}$ which contains~$\sbr[0]{u_{\sh;\d}^{(1)}}^2$ (denoted by $v_{\sh,\xi_\d,\xi_\d}^{(\<cms>)}$ above) is already accounted for in~$A_\sh - \tilde{A}_\sh$ since it need to be renormalised.
		This is what distinguishes~$\tilde{u}^{(2)}_{\sh;\d}$ from~$\hat{u}^{(2)}_{\sh;\d}$.
	\end{itemize}
\end{remark}

\appendix

\section{Technical constructions} \label{app:rs_extended}

In this section, we amend the regularity structures setting recalled in~\cite[App.~A]{friz_klose_22}:
We introduce \emph{two} new Cameron-Martin symbols~$H_1 \equiv \<cm>$ and~$H_2 \equiv \<cm2>$ representing some~$h,k \in \CH$.
We restrict to this two-fold extension because it is sufficient for our purposes but the construction we present works equally well for~$n > 2$ new Cameron--Martin symbols.

Let~$\TT = (\CT,\CA,\CG)$ be the usual gPAM regularity structure~(see~\cite[Sec.~$3.1$]{cfg}) and recall the notation from Table~\ref{table:symbols} on page~\pageref{table:symbols} above.

\paragraph*{Vector extension.}
\begin{itemize}[itemsep=5pt]
	\item We extend~$\TT$ to~$\TT[\<cm>,\<cm2>] = (\CT[\<cm>,\<cm2>],\CA[\<cm>,\<cm2>],\CG[\<cm>,\<cm2>])$ in the way described by Cannizzaro, Friz, and Gassiat~\cite[Sec.~$3$]{cfg}. In particular, we set~$H_0 := \<wn>$ for ease of notation and then
	\begin{align*}
		\CT[\<cm>,\<cm2>] & = \scal{\CF[\<cm>,\<cm2>]}, \\
		\CF[\<cm>,\<cm2>] & = \{H_i, \CI(H_i)H_j, X_m H_j, \1, \CI (H_i), X_m: i,j \in \{0,1,2\}, \ m \in \{1,2\}\}
	\end{align*}
	and use the standard graphical notation~$\<1p1g> := \CI(H_2)H_1 \equiv \CI(\<cm2>)\<cm>$ etc.
	The degree is associated in the usual way by
	\begin{equation*}
		\deg(H_i) := -1-\kappa, \quad \deg(\CI(\tau)) := \deg(\tau) + 2, \quad
		\deg(\tau \bar{\tau}) := \deg(\tau) \deg(\bar{\tau})
	\end{equation*}
	for~$i \in \{0,1,2\}$. Furthermore, we define the~\emph{sector}\footnote{See~\cite[Def.~$2.5$]{hairer_rs} for the definition of a~\emph{sector}.}
	\begin{equation*}
		\CU[\<cm>,\<cm2>] 
		:= \{\1, \CI(H_i), X_m: i \in \{0,1,2\}, \ m \in \{1,2\}\}
		= \{\1, \<1>, \<1g>, \<1p>, X_m: \ m \in \{1,2\}\}.
	\end{equation*}
	\item The corresponding space of extended models is denoted by~$\MM[\<cm>,\<cm2>]$. \label{app:extended_models}
\end{itemize}
Recall that~$\CF$ is defined like~$\CF[\<cm>,\<cm2>]$ but where~$i = j = 0$ is the only allowed choice for these parameters. 
\begin{itemize}[itemsep=5pt] \label{def:extension_operator}
	\item We introduce the \emph{extension operator} $E_{(h,k)}: \MM \to \MM[\<cm>,\<cm2>]$ analogously to~\cite[Prop.~$3.10$]{cfg}. In particular, de\-no\-ting~$E_{(h,k)} \bz = (\Pi^{\e_{(h,k)}},f^{\e_{(h,k)}})$ for~$\bz = (\Pi,f) \in \MM$, we set for any~$z \in \R^3$:
	\begin{enumerate}[label=(\arabic*), itemsep=5pt]
		\item $\Pi^{\e_{(h,k)}}_z\sVert[0]_{\CT} \equiv \Pi_z$ and~$f_z^{\e_{(h,k)}}\sVert[0]_{\CT^+} \equiv f_z$,
		\item $\Pi^{\e_{(h,k)}}_z \<cm> := h$ and $\Pi^{\e_{(h,k)}}_z \<cm2> := k$,
		\item for~$\tau \in \CF[\<cm>,\<cm2>] \setminus \CF$ with~$\deg(\tau) < 0$ s.t.~$\tau = \tau_1 \tau_2$ with~$\tau_1 \in \CU[\<cm>,\<cm2>]$ and~$\tau_2 \in \{H_i: i \in \{0,1,2\}\}$,
		\begin{equation*}
			\Pi^{\e_{(h,k)}}_z \tau :=  \del[1]{\Pi^{\e_{(h,k)}}_z \tau_1} \del[1]{\Pi^{\e_{(h,k)}}_z \tau_2}
		\end{equation*}
		\item~$\Pi^{\e_{(h,k)}}$ satisfies the \emph{admissibility criterion}, see~\cite[eq.'s~($3.6$a) and~($3.6$b)]{cfg} and~$f^{\e_{(h,k)}}$ is defined from~$\Pi^{\e_{(h,k)}}$ according to~\cite[eq.'s~($3.7$) --~($3.8$)]{cfg}.
	\end{enumerate}
\end{itemize}

\begin{remark}
	Cannizzaro, Friz, and Gassiat only extend the regularity structure by one CM symbol. However, as we have just seen, it is obvious how to adapt their construction to any finite number of new CM symbols. 
	A proof that this is also possible in the \emph{generic} regularity structures framework -- and not only that for~gPAM -- can be found in the work of Sch\"onbauer~\cite[Sec.~$3$]{schoenbauer}.
\end{remark}

\paragraph*{Extension and translation operators.}
Restricting the above construction to~$i,j \in \{0,1\}$, we obtain the extended regularity structure~$\TT[\<cm>]$ introduced in~\cite[Sec.~$3.2$]{cfg} and the extension operator~$E_h: \MM\to \MM[\<cm>]$ constructed in~\cite[Prop.~$3.10$]{cfg}. We write $E_h\bz = (\Pi^{\e_h},f^{\e_h})$ for~$\bz = (\Pi,f)$.

\begin{itemize}[itemsep=3pt]
	\item We define the abstract linear translation operations 
	\begin{equation*}
		\ft_{\<cm2s>}: \CT \to \CT[\<cm2>], \quad 
		\tilde{\ft}_{\<cm2s>}: \CT[\<cm>] \to \CT[\<cm>, \<cm2>]
	\end{equation*}
	for~$\t \in \{\ft_{\<cm2s>}, \tilde{\ft}_{\<cm2s>}\}$ by
	\begin{equs}
		\tilde{\ft}_{\<cm2s>} \<cm> := \<cm>, \quad
		\t \<wn> & := \<wn> + \<cm2>, \quad 
		\t X^k := X^k, \quad
		\t (\tau \sigma) := (\t \tau) (\t \sigma), \quad
		\t (\CI \tau) := \CI(\t \tau)
	\end{equs}
	where~$\sigma, \tau \in \CT$ if~$\t = \ft_{\<cm2s>}$ and~$\sigma, \tau \in \CT[\<cm>]$ if~$\t = \tilde{\ft}_{\<cm2s>}$.
	\item
	As in~\cite[Prop.~$3.12$]{cfg}, we then consider the translation operator~$T_k: \MM \to \MM$ given by~$\Pi^{\t_k} = \Pi^{\e_k} \circ \ft_k$.
	\item
	Now suppose the new \enquote{base regularity structure} is~$\TT[\<cm>]$, the one-fold extension of~$\TT$.
	The one-fold extension of~$\TT[\<cm>]$, in turn, is~$\TT[\<cm>,\<cm2>]$: We denote the corresponding extension and translation operators by~
	\begin{equation*} \label{def:amended_extension_op}
		\tilde{E}_k: \MM[\<cm>] \to \MM[\<cm>,\<cm2>] \quad \text{and} \quad \tilde{T}_k: \boldsymbol{\CM}[\<cm>] \to \boldsymbol{\CM}[\<cm>]. 
	\end{equation*}
	We write~$\Pi^{\e_h,\tilde{\e}_k}$ for the~$\Pi$-component of~$(\tilde{E}_k \circ E_h)\bz \in \MM[\<cm>,\<cm2>]$ and then have
	\begin{equation*}
		\Pi^{\e_h,\tilde{\t}_k} = \Pi^{\e_h,\tilde{\e}_k} \circ \tilde{\ft}_k
	\end{equation*}	
	for the~$\Pi$-component of~$(\tilde{T}_k \circ E_h) \bz \in \MM[\<cm>]$.
	Note that the operations represented by the superscripts are carried out from left to right.
\end{itemize}

The following lemma verifies the intuitively true statements that $(1)$ the order of translation and extension basically does not matter and $(2)$ that two successive one-fold extensions correspond to a two-fold extension.

\begin{lemma} \label{lem:comm_ext_trans}
	For any~$h,k \in \CH$ we have
	$\tilde{T}_k \circ E_h = E_h \circ T_k$
	as well as~$E_{(h,k)} = \tilde{E}_k \circ E_h$.
\end{lemma}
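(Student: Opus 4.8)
The plan is to prove the second identity, $E_{(h,k)} = \tilde{E}_k \circ E_h$, first, and then deduce the first one from it. For the second, I would appeal to the uniqueness of the extension operator (see~\cite[Prop.~3.10]{cfg}, or~\cite[Sec.~3]{schoenbauer} for the generic statement): by construction, $E_{(h,k)}$ is the \emph{unique} map $\MM \to \MM[\<cm>,\<cm2>]$ whose image model $(\Pi^{\e_{(h,k)}},f^{\e_{(h,k)}})$ satisfies properties~(1)--(4) stated above, i.e.\ it restricts to $\bz$ on $\CT$ (resp.\ $\CT^+$), sends $\<cm> \mapsto h$ and $\<cm2> \mapsto k$, acts multiplicatively on the (negative-degree) product symbols $\CI(H_i)H_j$ and $X_m H_j$, and is admissible. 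It then suffices to check that $\tilde{E}_k \circ E_h$ enjoys these four properties: (1) holds because $E_h$ reproduces $\bz$ on $\CT$ (resp.\ $\CT^+$) and $\tilde{E}_k$ restricts to the identity there; (2) holds because $E_h$ imposes $\<cm> \mapsto h$, which $\tilde{E}_k$ leaves unchanged, while $\tilde{E}_k$ imposes $\<cm2> \mapsto k$; for (3), a product built only from the indices $\{0,1\}$ already lies in $\CF[\<cm>]$ and is handled multiplicatively by $E_h$ and then preserved verbatim by $\tilde{E}_k$, whereas one involving $\<cm2>$ or $\CI(\<cm2>)$ is a fresh symbol for $\tilde{E}_k$ and handled by its multiplicativity; and (4) holds since admissibility is preserved by each of $E_h$ and $\tilde{E}_k$. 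Uniqueness then forces $\tilde{E}_k \circ E_h = E_{(h,k)}$.

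For the first identity, $\tilde{T}_k \circ E_h = E_h \circ T_k$, I would use that both sides are \emph{admissible} models over $\TT[\<cm>]$ that act \emph{multiplicatively} on the product symbols --- which holds because $E_h$, $\tilde{E}_k$ and the abstract translations $\ft_k, \tilde{\ft}_k$ (compatible with $\CI$ and with products by their definition) all preserve these features; cf.~\cite[Prop.~3.12]{cfg}. For such a model the whole $\Pi$-map is determined by its values on the generators $\<wn>, \<cm>, X_1, X_2$, via $\Pi_z\CI(\tau) = K*\Pi_z\tau - (\text{Taylor polynomial of } K*\Pi_z\tau \text{ at } z)$ and $\Pi_z(\tau_1\tau_2) = (\Pi_z\tau_1)(\Pi_z\tau_2)$, while the $f$-component is then fixed by admissibility. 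Hence it suffices to evaluate both $\Pi$-maps on $\{\<wn>, \<cm>, X_1, X_2\}$.

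I would then compute, generator by generator. On the right, the model $T_k\bz$ has $\Pi$-component $\Pi^{\e_k}\circ\ft_k$ on $\CT$ (where $E_k$ denotes the extension by the auxiliary symbol $\<cm2>$), so it sends $\<wn> \mapsto \Pi^{\e_k}_z(\<wn>+\<cm2>) = \Pi_z\<wn> + k$ and leaves $X_m$ unchanged; applying $E_h$ then adjoins $\<cm> \mapsto h$. On the left, $E_h$ first adjoins $\<cm> \mapsto h$ with the noise unchanged, and then $\tilde{T}_k$ --- whose $\Pi$-component is $\Pi^{\e_h,\tilde{\e}_k}\circ\tilde{\ft}_k$ with $\Pi^{\e_h,\tilde{\e}_k} = \Pi^{\e_{(h,k)}}$ by the second identity --- sends $\<wn> \mapsto \Pi^{\e_{(h,k)}}_z(\<wn>+\<cm2>) = \Pi_z\<wn> + k$, sends $\<cm> \mapsto \Pi^{\e_{(h,k)}}_z(\tilde{\ft}_k\<cm>) = \Pi^{\e_{(h,k)}}_z\<cm> = h$, and leaves $X_m$ unchanged. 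Both $\Pi$-maps therefore agree on every generator, hence on all of $\CT[\<cm>]$, and the two models coincide.

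The only genuine obstacle is bookkeeping: at each step one must keep track of which regularity structure plays the role of the \enquote{base} (so that, e.g., $\<cm>$ is an old symbol for $\tilde{E}_k$ while $\<cm2>$ is new) and of which abstract translation ($\ft_k : \CT \to \CT[\<cm2>]$ versus $\tilde{\ft}_k : \CT[\<cm>] \to \CT[\<cm>,\<cm2>]$) and extension operator are being composed with what. Once the \enquote{check on generators} reduction is in place, each verification is immediate; I should also make sure that the preservation of admissibility and multiplicativity under $T_k$ and $\tilde{T}_k$ is either cited from~\cite{cfg} or deduced from $\ft_k(\CI \tau) = \CI(\ft_k \tau)$ and $\ft_k(\tau \sigma) = (\ft_k \tau)(\ft_k \sigma)$.
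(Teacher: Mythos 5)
Your treatment of the second identity via the uniqueness characterisation of the extension operator is fine and close in spirit to what the paper leaves as ``similar arguments''. The problem is in your proof of the first identity, specifically in the reduction ``it suffices to evaluate both $\Pi$-maps on the generators $\<wn>, \<cm>, X_1, X_2$''. That reduction rests on the claim that the models in question act multiplicatively on \emph{all} product symbols, so that $\Pi_z(\tau_1\tau_2)=(\Pi_z\tau_1)(\Pi_z\tau_2)$ determines everything from the generators. This is false for the symbol $\<11>=\CI(\<wn>)\<wn>$, which belongs to the \emph{base} structure $\CT$: a general model $\bz\in\MM$ --- and in particular the renormalised model $\hbz$, to which this lemma is applied in Lemma~\ref{lem:aux1} --- satisfies $\Pi_z\<11>\neq(\Pi_z\<1>)(\Pi_z\<wn>)$; that is exactly what renormalisation means here. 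The extension operators are multiplicative only on the \emph{new} symbols (those involving $\<cm>$ or $\<cm2>$) and merely restrict to $\bz$ on $\CT$, so there is no multiplicativity to ``preserve''. Consequently your generator computation does not determine, let alone compare, the two sides on $\<11>$ (note $\ft_{\<cm2s>}\<11>$ contains the old summand $\<11>$, whose image is the non-multiplicative datum $\Pi_z\<11>$ inherited from $\bz$), and this is precisely the symbol where something could go wrong.

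The repair is essentially the paper's argument: a case analysis over $\tau\in\CF[\<cm>]$. For $\tau$ containing no $\<cm>$ one checks directly that both compositions reduce to $\Pi_z^{\e_k}\ft_{\<cm2s>}\tau=\Pi_z^{\t_k}\tau$, using that the two-fold extension restricted to $\CT[\<cm2>]$ agrees with the one-fold extension by $k$ --- this step carries the non-multiplicative data of $\bz$ through correctly. For $\tau$ containing $\<cm>$ but no $\<wn>$ one uses $\tilde{\ft}_{\<cm2s>}\tau=\tau$, and for the mixed symbols $\<11g>,\<1g1>$ one may then legitimately invoke multiplicativity, since these are new symbols on which the extensions are defined multiplicatively. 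If you want to keep your ``check on generators'' philosophy, you must first separately establish agreement of the two models on all of $\CT$ (which is the content of the first case above) and only then argue multiplicatively for the genuinely new symbols.
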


\begin{proof}
	The proof is a straight-forward application of the definitions. First, note that the operations on both sides of the first claim map from~$\MM$ to~$\MM[\<cm>]$ and from~$\MM$ to~$\MM[\<cm>,\<cm2>]$ in the second assertion.
	
	We distinguish the cases for
	\begin{equation*}
		\tau \in \CF[\<cm>] \equiv \{\<wn>, \<cm>, \<11>, \<1g1>, \<1g1g>, \<11g>, X_m \<wn>, X_m \<cm>, \1, \<1>, \<1g>, X_m: \ m = 1,2\}.
	\end{equation*}
	\begin{itemize}[itemsep=5pt]
		\item Suppose~$\tau$ does not contain an instance of~$\<cm>$. By definition, we then have
		\begin{equation*}
			\Pi_z^{\e_h,\tilde{\t}_k} \tau 
			= 
			\Pi_z^{\e_h,\tilde{\e}_k} \tilde{\ft}_{\<cm2s>} \tau
			= 
			\Pi_z^{\e_k} \ft_{\<cm2s>} \tau
			=
			\Pi_z^{\t_k} \tau
			=
			\Pi_z^{\t_k;\e_h} \tau.
		\end{equation*} 
		\item Suppose~$\tau$ contains at least one instance of~$\<cm>$ but no instance of~$\<wn>$. We then have~$\tilde{\ft}_{\<cm2s>} \tau = \tau$ and thus
		\begin{equation*}
			\Pi_z^{\e_h,\tilde{\t}_k} \tau 
			= \Pi_z^{\e_h,\tilde{\e}_k} \tilde{\ft}_{\<cm2s>} \tau
			= \Pi_z^{\e_h} \tau
			= \Pi_z^{\t_k,\e_h} \tau.
		\end{equation*}
		\item Finally, consider the case when~$\tau$ contains instances of both~$\<wn>$ and~$\<cm>$, i.e.~$\tau \in \{\<11g>, \<1g1>\}$. We only consider~$\tau = \<11g>$\,; the argument for the other symbol is analogous.
		We have
		\begin{equs}
			\Pi_z^{\t_k,\e_h} \<11g>
			& =
			\del[1]{\Pi_z^{\t_k,\e_h} \<1>} \del[1]{\Pi_z^{\t_k,\e_h} \<cm>}
			=
			\del[1]{\Pi_z^{\e_k} \ft_{\<cm2s>} \<1>} \del[1]{\Pi_z^{\t_k,\e_h} \<cm>}
			=
			\del[1]{\Pi_z^{\e_k} \sbr[1]{\<1> + \<1p>}} \del[1]{\Pi_z^{\t_k,\e_h} \<cm>} \\
			& =
			\Pi_z^{\e_h,\tilde{\e}_k} \sbr[1]{\<11g> + \<1p1g>}
			=
			\Pi_z^{\e_h,\tilde{\e}_k} \ft_{\<cm2s>} \<11g>
			=
			\Pi_z^{\e_h,\tilde{\t}_k} \<11g>.
		\end{equs}
	\end{itemize}
	By admissibility, the corresponding~$f$'s are defined from the~$\Pi$'s and the proof of the first claim is complete.
	The second statement can be proved by similar arguments.
\end{proof}

\section{Background on Sobolev and Besov spaces} \label{sec:aux_besov}

In this appendix, we present some results on Sobolev and Besov spaces which we are frequently using throughout the article.	

\paragraph*{Sobolev spaces on~$\T^2$.}
For a given~$\alpha \in \R$, denote by~$H^\alpha(\T^2)$ the space of functions resp. distributions~$f: \T^2 \to \R$ for which the following \emph{norm is finite}:
\begin{equation*}
	\norm[0]{f}_{H^{\alpha}(\T^2)} 
	:=
	\norm[0]{\del[0]{1 + \abs[0]{\,\cdot\,}^2}^{\nicefrac{\alpha}{2}} \hat{f}(\,\cdot\,)}_{\ell^2(\mathbb{Z}^2)}
	=
	\del[4]{\,\sum_{k \in \mathbb{Z}^2} \del[1]{1 + \abs[0]{k}^2}^\alpha \hat{f}(k)}^{\frac{1}{2}} < \infty \,.
\end{equation*}
In particular, we have~$H^0(\T^2) = L^2(\T^2)$.

\paragraph*{Besov spaces on~$\T^2$.}
For~$\alpha \in \R$ and~$p,q \in [1,\infty]$, we define the Besov space~$B_{p,q}^\alpha(\T^2)$ as the \emph{completion of the space of smooth functions}~$\CC^\infty(\T^2)$~\emph{under the norm}
\begin{equation*}
	\norm[0]{f}_{B^\alpha_{p,q}}
	:=
	\norm[1]{\, 2^{j\alpha} \norm[0]{\Delta_j f}_{L^p}\,}_{\ell^q(j \geq -1)}
\end{equation*}
where~$\Delta_j$ represents the $j$-th Littlewood--Paley block, see~\cite[Chap.~2]{bcd} for details.
We also use the shorthand notation~$\CC^\alpha(\T^2) := B^{\alpha}_{\infty,\infty}(\T^2)$.
Note the difference between this definition (\enquote{completion under the norm}) vs. the one for Sobolev spaces (\enquote{finite under the norm}) which ensures that the Besov spaces are Polish.
This is a technical issue that has appeared in many contexts and, in our case, is necessary for the large deviation results in~\cite{hairer_weber_ldp} (which the result in Theorem~\ref{thm:precise_laplace} is based upon) to be applicable.
It has almost no effect on the discussions in this article but for the fact that, for any~$\alpha \in \R$, we have 
\begin{equation*}
	B_{2,2}^\alpha(\T^2) \embed H^\alpha(\T^2) \embed \CC^{\alpha'}(\T^2) 
	\quad \text{for all} \ \alpha' < \alpha - 1\, .
\end{equation*}
In other words:
The second continuous embedding becomes slightly worse compared to the standard Besov embedding results in~\cite[Prop.~2.71]{bcd}, which we recall in the next proposition.

\begin{proposition}[Besov embedding] \label{prop:embed_besov}
	Let~$1 \leq p_1 \leq p_2 \leq \infty$ and~$1 \leq q_1 \leq q_2 \leq \infty$.
	Then, for any~$\alpha \in \R$, we have
	\begin{equation*}
		B_{p_1,q_1}^{\alpha_1}(\T^2) \embed B_{p_2,q_2}^{\alpha - 2\del[0]{\frac{1}{p_1} - \frac{1}{p_2}}}(\T^2) \,.
	\end{equation*}
\end{proposition}

We also frequently refer to the following results about multiplication in Besov spaces and improvement of regularity by the heat semigroup.

\begin{proposition}[Besov multiplication] \label{prop:mult_besov}
	Let~$\alpha, \beta \in \R$ and~$p,p_1,p_2,q_1,q_2 \in [1,\infty]$ be such that
	\begin{equation*}
		\frac{1}{p} = \frac{1}{p_1} + \frac{1}{p_2}, \qquad  \alpha \leq \beta, \qquad \alpha + \beta > 0.
	\end{equation*}
	Then, the mapping~$(f,g) \mapsto f \cdot g$ extends to a continuous bilinear map from~$B^{\alpha}_{p_1,q_1} \x B^{\beta}_{p_2,q_2}$ to~$B^{\alpha}_{p,q_1}$. 
\end{proposition}

\begin{remark} \label{rmk:besov_multiplication}
	Note that the product inherits the \enquote{summability parameter}~$q_1$ of the factor~$f$ that is of lower regularity~$\alpha$, and \emph{not} the summability~$q \in [1,\infty]$ given by~$q^{-1} = q_1^{-1} + q_2^{-1}$, even though the latter is true for the \enquote{resonant} part of the product (see~\cite[Thm.~$2.85$]{bcd}).
	We refer to Broux and Lee~\cite[Sec.~$3.3$]{broux_lee} for a detailed discussion of this issue.
	The same article also is an excellent source for a proof of Proposition~\ref{prop:mult_besov} in case~$\alpha < 0 < \beta$ and, in addition, contains further references to the literature.
	For the more general case~$\alpha \leq \beta$, we direct the reader to the lecture notes of van Zuijlen~\cite[Thm.~$19.7$]{van_zuijlen}.
\end{remark}

The next proposition is copied from~\cite[Prop.~A.$13$]{mourrat_weber_infinity} which also contains references for the proofs.
Recall that~$P_t = e^{t\Delta}$ denotes the heat semigroup.

\begin{proposition}[Heat flow] \label{prop:heat_besov}
	Let~$\alpha,\beta \in \R$ such that~$\alpha \geq \beta$ and~$p,q \in [1,\infty]$. Then, there exists some constant~$C > 0$ such that, uniformly over~$t > 0$, we have the estimate
	\begin{equation*}
		\norm[0]{P_t f}_{B_{p,q}^\alpha} \leq C t^{\frac{1}{2}(\beta - \alpha)} \norm[0]{f}_{B_{p,q}^\beta}.
	\end{equation*}
\end{proposition}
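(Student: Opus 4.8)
The plan is to pass to a Littlewood--Paley decomposition, reduce the estimate to a single dyadic frequency block, and there exploit the fast decay of the heat multiplier $e^{-t\abs{\xi}^2}$ on the annulus $\abs{\xi}\sim 2^j$. Concretely, I would fix a dyadic partition of unity $(\rho_j)_{j\geq -1}$ on $\T^2$, write $\Delta_j := \rho_j(D)$ for the associated Littlewood--Paley blocks, and recall that $\norm[0]{f}_{B_{p,q}^s}$ equals the $\ell^q$-norm of the sequence $\del[1]{2^{js}\norm[0]{\Delta_j f}_{L^p}}_{j\geq -1}$. Since $P_t = e^{t\Delta}$ is a Fourier multiplier it commutes with each $\Delta_j$, so it suffices to bound $\norm[0]{P_t\Delta_j f}_{L^p}$ in terms of $\norm[0]{\Delta_j f}_{L^p}$ with the right dependence on $j$ and $t$, and afterwards take $\ell^q$-norms over $j$ (using the triangle inequality in $\ell^q$).

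The heart of the argument is a Bernstein-type estimate: there is a constant $c > 0$ such that for all $j\geq 0$ and all $t > 0$,
\begin{equation*}
	\norm[0]{P_t\Delta_j f}_{L^p} \lesssim e^{-ct2^{2j}}\norm[0]{\Delta_j f}_{L^p}.
\end{equation*}
To establish it I would pick $\tilde\rho_j$ equal to $1$ on $\operatorname{supp}\rho_j$ and supported in $\abs{\xi}\sim 2^j$, so that $P_t\Delta_j f = m_{t,j}(D)\Delta_j f$ with symbol $m_{t,j}(\xi) = e^{-t\abs{\xi}^2}\tilde\rho_j(\xi)$, and then bound the $L^1$-norm of the convolution kernel $K_{t,j}$ of $m_{t,j}(D)$ by $e^{-ct2^{2j}}$. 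The cleanest route is to rescale $\xi = 2^j\zeta$ and estimate $e^{-s\abs{\zeta}^2}\tilde\rho_0(\zeta)$ and its $\zeta$-derivatives with $s := t2^{2j}$ on the fixed annulus $\abs{\zeta}\sim 1$, where $e^{-s\abs{\zeta}^2}\leq e^{-cs}$ and every derivative inherits the same exponential factor; the usual integration-by-parts argument (i.e.\ $x^\gamma K_{t,j}$ is the transform of $\der^\gamma m_{t,j}$) then gives $\norm[0]{K_{t,j}}_{L^1}\lesssim e^{-ct2^{2j}}$, and Young's inequality yields the claim. On $\T^2$ this is obtained either by periodising the Euclidean kernel or directly from the Jacobi-theta representation of the periodic heat kernel. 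For the low-frequency block $j=-1$ one simply uses that $P_t$ is a contraction on $L^p(\T^2)$, so $\norm[0]{P_t\Delta_{-1}f}_{L^p}\leq\norm[0]{\Delta_{-1}f}_{L^p}$.

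It then remains to sum. For $j\geq 0$,
\begin{equation*}
	2^{j\alpha}\norm[0]{P_t\Delta_j f}_{L^p} \lesssim 2^{j(\alpha-\beta)}e^{-ct2^{2j}}\cdot 2^{j\beta}\norm[0]{\Delta_j f}_{L^p},
\end{equation*}
and since $\alpha-\beta\geq 0$ the function $x\mapsto x^{(\alpha-\beta)/2}e^{-cx}$ is bounded on $[0,\infty)$; writing $x = t2^{2j}$ gives $2^{j(\alpha-\beta)}e^{-ct2^{2j}}\lesssim t^{-(\alpha-\beta)/2} = t^{\frac{1}{2}(\beta-\alpha)}$, uniformly in $j$ and $t$, while the $j=-1$ term only contributes the harmless constant $2^{-\alpha}$ (which is absorbed since $t^{\frac{1}{2}(\beta-\alpha)}\gtrsim 1$ on the bounded time-windows $t\in(0,T]$ occurring in the applications). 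Taking $\ell^q$-norms over $j$ produces $\norm[0]{P_t f}_{B_{p,q}^\alpha}\lesssim t^{\frac{1}{2}(\beta-\alpha)}\norm[0]{f}_{B_{p,q}^\beta}$.

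The one genuinely technical point is the kernel estimate in the second step: one must capture the \emph{exponential} gain $e^{-ct2^{2j}}$ (not just a polynomial gain) uniformly in both $j$ and $t$, and do so on the torus rather than on $\R^2$. Everything else is bookkeeping. Since the statement is entirely classical, in the actual write-up I would most likely just invoke the corresponding lemma from the literature (as the paper already does, citing~\cite{mourrat_weber_infinity}) rather than reproducing this argument.
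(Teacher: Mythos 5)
Your argument is correct, but there is nothing in the paper to compare it against: the paper gives no proof of this proposition at all and simply copies the statement from \cite[Prop.~A.13]{mourrat_weber_infinity}, deferring to the references therein. What you have written out is precisely the standard Littlewood--Paley proof underlying that citation, and the details check out: the commutation of $P_t$ with the blocks, the rescaled kernel estimate giving the exponential gain $e^{-ct2^{2j}}$ (derivatives of $e^{-s\abs{\zeta}^2}$ on the unit annulus only produce powers of $s$, which the exponential absorbs), and the elementary bound $\sup_{x\geq 0} x^{(\alpha-\beta)/2}e^{-cx} < \infty$ that converts $2^{j(\alpha-\beta)}e^{-ct2^{2j}}$ into $t^{\frac{1}{2}(\beta-\alpha)}$ uniformly in $j$. (Two minor remarks: the final summation needs only termwise monotonicity of the $\ell^q$-norm, not the triangle inequality; and you are right to flag the $j=-1$ block --- on the torus $P_t$ preserves the mean, so the estimate as literally stated cannot hold \emph{uniformly over all} $t>0$ when $\alpha>\beta$, e.g.\ for $f\equiv 1$, and is really an estimate for $t$ in a bounded interval, which is all the paper ever uses. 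Your parenthetical handles exactly this point.)
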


\bibstyle{alpha}
\bibliography{bibliography}

\begin{thebibliography}{CdLFW24}

\bibitem[BA88]{ben_arous_laplace}
G\'{e}rard Ben~Arous.
\newblock {M}ethods de {L}aplace et de la phase stationnaire sur l'espace de
  {W}iener.
\newblock {\em Stochastics}, 25(3):125--153, 1988.

\bibitem[BCD11]{bcd}
Hajer Bahouri, Jean-Yves Chemin, and Rapha{\"e}l Danchin.
\newblock {\em Fourier Analysis and Nonlinear Partial Differential Equations},
  volume 343 of {\em Grundlehren der mathematischen Wissenschaften}.
\newblock Springer, 2011.

\bibitem[BL21]{broux_lee}
Lucas Broux and David Lee.
\newblock {Besov Reconstruction}.
\newblock {\em Potential Analysis}, 59:1875--1912, 2021.

\bibitem[Bog07]{bogachev}
Vladimir Bogachev.
\newblock {\em Measure Theory}, volume~1.
\newblock Springer, 2007.

\bibitem[CdLFW24]{chandra_feltes_weber_24}
Ajay Chandra, Guilherme de~Lima~Feltes, and Hendrik Weber.
\newblock {A priori bounds for 2-d generalised Parabolic Anderson Model}, 2024.
\newblock Preprint, arXiv:2402.05544.

\bibitem[CF18]{chouk_friz}
K.~Chouk and P.~K. Friz.
\newblock Support theorem for a singular {SPDE}: The case of g{PAM}.
\newblock {\em Ann. Inst. H. Poincaré Probab. Statist.}, 54(1):202--219, 2018.

\bibitem[CFG17]{cfg}
G.~Cannizzaro, P.K. Friz, and P.~Gassiat.
\newblock {M}alliavin calculus for regularity structures: The case of g{PAM}.
\newblock {\em Journal of Functional Analysis}, 272(1):363 -- 419, 2017.

\bibitem[CH98]{cazenave_haraux}
Thierry Cazenave and Alain Haraux.
\newblock An {I}ntroduction to {S}emilinear {E}volution {E}quations.
\newblock In {\em Oxford Lecture Series in Mathematics and Its Applications}.
  Oxford University Press, 1998.

\bibitem[CW17]{chandra_weber}
Ajay Chandra and Hendrik Weber.
\newblock Stochastic {PDE}s, regularity structures, and interacting particle
  systems.
\newblock {\em Annales de la Facult{\'e} des Sciences de Toulouse
  Math{\'e}matiques}, 26(4):847--909, 2017.

\bibitem[DPZ92]{daprato-zabczyk}
Giuseppe Da~Prato and Jerzy Zabczyk.
\newblock {\em Stochastic Equations in Infinite Dimensions}.
\newblock Encyclopedia of Mathematics and its Applications. Cambridge
  University Press, 1992.

\bibitem[DS88]{dunford_schwartz}
Nelson Dunford and Jacob~T. Schwartz.
\newblock {\em Linear Operators, Part II: Spectral Theory}, volume VII of {\em
  Pure and Applied Mathematics}.
\newblock Wiley/Interscience, 1988.

\bibitem[FH20]{friz-hairer}
Peter~K. Friz and Martin Hairer.
\newblock {\em A Course on Rough Paths, 2nd edition}.
\newblock Springer, 2020.

\bibitem[FK22]{friz_klose_22}
Peter~K. Friz and Tom Klose.
\newblock Precise {L}aplace asymptotics for singular stochastic {PDE}s: The
  case of {2D} g{PAM}.
\newblock {\em Journal of Functional Analysis}, 283(1):109446, 2022.

\bibitem[Gro67]{gross}
Leonard Gross.
\newblock Abstract {W}iener spaces.
\newblock In {\em Proceedings of the {F}ifth {B}erkeley {S}ymposium on
  {M}athematical {S}tatistics and {P}robability, {V}olume 2: {C}ontributions to
  {P}robability {T}heory, {P}art 1}, pages 31--42, Berkeley, Calif., 1967.
  University of California Press.

\bibitem[Hai14]{hairer_rs}
Martin Hairer.
\newblock A theory of regularity structures.
\newblock {\em Inventiones mathematicae}, 198:269--504, 2014.

\bibitem[HW15]{hairer_weber_ldp}
Martin Hairer and Hendrik Weber.
\newblock Large deviations for white-noise driven, nonlinear stochastic {PDEs}
  in two and three dimensions.
\newblock {\em Annales de la Facult{\'e} des Sciences de Toulouse},
  24(1):55--92, 2015.

\bibitem[IK07]{inahama_kawabi}
Yuzuru Inahama and Hiroshi Kawabi.
\newblock {A}symptotic expansions for the {L}aplace approximations for
  {I}t\^{o} functionals of {B}rownian rough paths.
\newblock {\em Journal of Functional Analysis}, 243(1):270 -- 322, 2007.

\bibitem[Ina06]{inahama06}
Yuzuru Inahama.
\newblock {Laplace's method for the laws of heat processes on loop spaces}.
\newblock {\em Journal of Functional Analysis}, 232:148--194, 2006.

\bibitem[Ina13]{inahama_rde_fbm}
Yuzuru Inahama.
\newblock Laplace approximation for rough differential equation driven by
  fractional {B}rownian motion.
\newblock {\em Ann. Probab.}, 41(1):170--205, 01 2013.

\bibitem[Jan97]{janson}
Svante Janson.
\newblock {\em Gaussian Hilbert Spaces}.
\newblock Cambridge University Press, 1997.

\bibitem[KO08]{kusuoka2008remark}
Shigeo Kusuoka and Yasufumi Osajima.
\newblock A remark on the asymptotic expansion of density function of {W}iener
  functionals.
\newblock {\em Journal of Functional Analysis}, 255(9):2545--2562, 2008.

\bibitem[KS91]{kusuoka1991precise}
Shigeo Kusuoka and Daniel~W Stroock.
\newblock Precise asymptotics of certain {W}iener functionals.
\newblock {\em Journal of {F}unctional {A}nalysis}, 99(1):1--74, 1991.

\bibitem[Kuo75]{kuo}
Hui-Hsiung Kuo.
\newblock {\em Gaussian {M}easures in {B}anach {S}paces}, volume 463 of {\em
  Lecture Notes in Mathematics}.
\newblock Springer, 1975.

\bibitem[MW17]{mourrat_weber_infinity}
Jean-Christophe Mourrat and Hendrik Weber.
\newblock The {D}ynamic {{$\Phi^4_3$}} {M}odel {C}omes {D}own from {I}nfinity.
\newblock {\em Communications in Mathematical Physics}, 356(3):673--753, Dec
  2017.

\bibitem[Nua06]{nualart}
David Nualart.
\newblock {\em The {M}alliavin {C}alculus and {R}elated {T}opics}.
\newblock Springer, 2006.

\bibitem[RS96]{runst_sickel}
Thomas Runst and Winfried Sickel.
\newblock Sobolev {S}paces of {F}ractional {O}rder, {N}emytskij {O}perators,
  and {N}onlinear {P}artial {D}ifferential {E}quations.
\newblock In {\em de Gruyter series in nonlinear analysis and applications},
  1996.

\bibitem[{Sch}23]{schoenbauer}
Philipp {Sch{\"o}nbauer}.
\newblock {Malliavin calculus and densities for singular stochastic partial
  differential equations}.
\newblock {\em Probability Theory and Related Fields}, 186:643--713, 2023.

\bibitem[Sim05]{simon_trace_ideals}
Barry Simon.
\newblock {\em {T}race {I}deals and {T}heir {A}pplications}, volume 120 of {\em
  Mathematical Surveys and Monographs}.
\newblock American Mathematical Society, second edition, 2005.

\bibitem[vZ21]{van_zuijlen}
Willem van Zuijlen.
\newblock Theory of function spaces.
\newblock Lecture notes available on the
  \href{https://www.wias-berlin.de/people/vanzuijlen/LN_theory_of_function_spaces.pdf}{van
  Zuijlen's webpage} (accessed on 24 Mar 2022), 2021.

\end{thebibliography}

\vspace{3em}
\noindent
\begin{minipage}{.5\textwidth}
	\small
	\textbf{Tom Klose} \\
	University of Oxford \\ 
	Mathematical Institute \\
	Woodstock Road \\
	Oxford, OX2 6GG, United Kingdom \\
	{\it E-mail address:}
	{\tt tom.klose@maths.ox.ac.uk}
\end{minipage}%

\end{document}